\documentclass[reqno,12pt]{amsart}
\usepackage[margin=1in,footskip=0.5in]{geometry}

\usepackage{import}
\usepackage[english]{babel}
\usepackage{amsmath}
\usepackage{amsaddr}

\usepackage{amssymb}
\usepackage{amsthm}
\usepackage{graphicx}
\usepackage{setspace}
\usepackage{epstopdf}

\usepackage{color}
\usepackage[dvipsnames]{xcolor}
\usepackage{enumitem}
\usepackage{caption}
\usepackage{subcaption}

\makeatletter
\DeclareCaptionLabelFormat{lowercase}{(\alph{subfigure})} 
\makeatother

\newcommand{\bpr}{\begin{trivlist} \item[]{\bf Proof. }}
\newcommand{\epr}{\hspace*{\fill} $\qed$\end{trivlist}}

\renewcommand{\i}{{\mathrm i}}
\newcommand{\e}{{\mathrm e}}
\newcommand{\real}{\operatorname{Re}}
\newcommand{\imag}{\operatorname{Im}}

\newcommand{\be}{\begin{eqnarray}}
\newcommand{\ee}{\end{eqnarray}}
\newcommand{\ba}{\begin{align}}
\newcommand{\ea}{\end{align}}
\newcommand{\bi}{\begin{itemize}}
\newcommand{\ei}{\end{itemize}}

\newcommand{\kC}{\mathcal C}

\newcommand{\kM}{\mathcal M}
\newcommand{\kO}{\mathcal O}

\newcommand{\eqlab}[1]{\label{eq:#1}}
\renewcommand{\eqref}[1]{(\ref{eq:#1})}

\newcommand{\appref}[1]{Appendix~\ref{app:#1}}

\newcommand{\applab}[1]{\label{app:#1}}
\renewcommand{\S}{\mathbb S}

\newcommand{\N}{\mathbb N}
\newcommand{\C}{\mathbb C}

\newcommand{\R}{\mathbb R}
\newcommand{\Z}{\mathbb Z}

\newcommand{\norm}[1]{\left\lVert#1\right\rVert}

\newtheorem{theorem}{Theorem}[section]
\newtheorem{proposition}[theorem]{Proposition}

\newtheorem{lemma}[theorem]{Lemma}
\newtheorem{cor}[theorem]{Corollary}

\newtheorem{remark}[theorem]{Remark}

\numberwithin{equation}{section}

\title[On the slow passage through a Hopf in a class of real-analytic system]{On the slow passage through a Hopf in a class of real-analytic systems: Exponential asymptotics and maximal delay}
\author{J. M. Christensen and K. U. Kristiansen}
\address{Department of Applied Mathematics and Computer Science\\
Kgs. Lyngby, DK.}

\begin{document}


%

\begin{abstract}
    In this paper, we consider the  slow passage through a Hopf in $\mathbb R^3$ in a certain class of real-analytic systems. Classically, the (simplified) Shishkova problem: $\epsilon \frac{dz}{d\mu} = \lambda(\mu) z+\mathcal O(\epsilon)$, $\lambda(\mu) = \mu-\i$, has been used to illustrate the delay associated with slow passage through a Hopf at $\mu=0$. In this paper, we consider a general $\lambda(\mu)$ with $\lambda(0)\in i\mathbb R\setminus \{0\}$, $\real[\lambda'(0)]\ne 0$, and add an equation for the complex conjugate. It is a basic fact that these systems are local normal forms for any real-analytic $(1,2)$ slow-fast system in $\mathbb R^3$ experiencing slow passage through a Hopf. In this paper, we study these systems under additional (global) assumptions on $\lambda$. In particular, we suppose that $\lambda$ has a simple zero away from the real axis. The remaining assumptions then relate to invariant manifolds of the so-called elliptic system $\dot \mu = -\i\overline{\lambda(\mu)}$ as well as properties of the level set $\operatorname{Re}[\i{\lambda(\mu)}{\lambda(\overline \mu)}]=0$. Under these assumptions, we then provide an asymptotic formula for the exponentially small splitting of attracting and repelling slow manifolds. As a corollary, we also obtain an asymptotic formula for the maximal delay. Our approach is geometric and is inspired by the work of Hayes et al (2016) (using blowup) and Neishtadt (1987,1988) (using so-called elliptic paths) but also by recent work of the second author on exponentially small splitting in unfoldings of the zero-Hopf bifurcation. 
\end{abstract}
\maketitle
\tableofcontents

\section{Introduction}
In this paper, we consider real-analytic systems of the form 
\begin{equation}
\begin{cases}\eqlab{ReS1}
      x' &= \alpha(\mu) x + \beta(\mu) y+\epsilon R( x,y,\mu,\epsilon),\\
 y' &= -\beta(\mu) x + \alpha(\mu) y+\epsilon S(x, y,\mu,\epsilon),\\
 \mu' &=\epsilon,
\end{cases}
\end{equation}
where
\begin{equation}\eqlab{HopfC}
    \alpha(0)=0, \quad \alpha'(0)\ne 0, \quad \beta(0)\ne 0,
\end{equation} 
and $0<\epsilon\ll1$.
It is without loss of generality to assume that $\alpha'(0)>0$ and $\mu\alpha(\mu)>0$ for all $\mu$ in a neighborhood of 0. Under these assumptions, $(x,y)=(0,0)$ defines an attracting (repelling) critical manifold of \eqref{ReS1} with $\epsilon = 0$ for $\mu<0$ ($\mu>0$, respectively), with eigenvalues of the linearization being $\alpha(\mu)\pm i \beta(\mu)$. At $\mu=0$, the critical manifold loses normal hyperbolicity due to a pair of imaginary eigenvalues. For $0<\epsilon\ll1$, the system \eqref{ReS1} therefore undergoes slow passage through a Hopf. In fact, any system in $\R^3$ undergoing slow passage through a Hopf can locally be brought into the form (1.1), see \cite{neishtadt1987a} and App A for further details. 
 
The problem of slow passage through a Hopf has a long history, see e.g. \cite{neishtadt1987a,neishtadt1988a}. In these references, Neishtadt showed that there is a delay for general analytic systems in which attracting slow manifolds $\xi^a(\mu,\epsilon) = (x^a(\mu,\epsilon),y^a(\mu,\epsilon))$ with $\mu<0$ pass through $\mu=0$ and leave a neighborhood of the repelling critical manifold for $\mu=\kO(1)>0$ as $\epsilon \rightarrow 0$. The delayed loss of stability is a consequence of the exponentially small splitting of attracting and repelling slow manifolds on either sides of $\mu=0$, see e.g. \cite{de2020a,hayes2015a}.

In this paper, we are concerned with an asymptotic formula for the splitting of the attracting and repelling slow manifolds and formulas for the maximal delay, going beyond the estimates in \cite{neishtadt1987a}.


It is well-know that treatments of exponentially small phenomena require extensions into the complex plane, see \cite{lazutkin2005a,neishtadt1987a,neishtadt1988a}. 
We therefore consider \eqref{ReS1} for $\mu\in \Sigma\subset \C$, where $\Sigma$ is a complex strip 
\begin{align}\eqlab{strip}
    \Sigma= [-C_1,C_2] \times \i [-C_3,C_4],\quad C_j>0, \,j\in\{1,2,3,4\},
\end{align} 
with $\i$ denoting the imaginary unit, and introduce the complex variables $z=x+\i y$, $w=x-\i y$ such that
\begin{equation}\eqlab{ComplexI}
\begin{cases}
z' &= \lambda(\mu) z + \epsilon F(z,w,\mu,\epsilon),\\
 w' &= \nu(\mu) w + \epsilon G(z,w,\mu,\epsilon),\\
 \mu' & = \epsilon.
\end{cases}
\end{equation}
Here we have defined
 \begin{align}\eqlab{lambdanudefn}
  \lambda(\mu) := \alpha(\mu)-\i\beta(\mu),\quad \nu(\mu) : = \alpha(\mu)+\i\beta(\mu),
 \end{align}
and
\begin{align*}
 F(z,w,\mu,\epsilon) : = R\left(\frac{z+w}{2},\frac{z-w}{2\i},\mu,\epsilon\right)+\i S\left(\frac{z+w}{2},\frac{z-w}{2\i},\mu,\epsilon\right),\\
 G(z,w,\mu,\epsilon) : = R\left(\frac{z+w}{2},\frac{z-w}{2\i},\mu,\epsilon\right)-\i S\left(\frac{z+w}{2},\frac{z-w}{2\i},\mu,\epsilon\right).
\end{align*}
Notice that  (with bars denoting complex conjugation)
\begin{align}\eqlab{symFL}
F(z,w,\mu,\epsilon) = \overline{G(\overline{w},\overline{z},\overline{\mu},\epsilon)},\quad \lambda(\mu)=\overline{\nu(\overline \mu)},
\end{align}
for all $z,w,\mu,\epsilon$. Here we have used that the system \eqref{ReS1} is real-analytic. This leads to the following obvious fact:
\begin{lemma}\label{lemmasym}
   $(z,w,\mu)(t)$ is a solution of \eqref{ComplexI} if and only if $(\overline{w},\overline{z},\overline{\mu})(\overline t)$ is. 
\end{lemma}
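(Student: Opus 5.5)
The plan is to verify the claim directly from the symmetry relations \eqref{symFL}, viewing \eqref{ComplexI} as a holomorphic ODE in complex time $t$. Given a solution $(z,w,\mu)(t)$, defined on some domain $D\subset\C$ of complex time, I will set
\begin{align*}
\tilde z(t) := \overline{w(\overline t)},\quad \tilde w(t):=\overline{z(\overline t)},\quad \tilde \mu(t):=\overline{\mu(\overline t)},
\end{align*}
for $t\in \overline D$. The first step is to note that if $f$ is holomorphic in $t$, then $t\mapsto \overline{f(\overline t)}$ is also holomorphic, with derivative $\overline{f'(\overline t)}$. This follows by conjugating the difference quotient, or equivalently by checking the Cauchy--Riemann equations. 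For real $t$ and real solutions it reduces to the trivial statement, so nothing is lost by working in complex time.

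Next I will compute each derivative in turn. For $\tilde\mu$ we get $\tilde\mu'(t)=\overline{\mu'(\overline t)}=\overline{\epsilon}=\epsilon$, using that $\epsilon$ is real. For $\tilde z$,
\begin{align*}
\tilde z'(t)=\overline{w'(\overline t)} = \overline{\nu(\mu(\overline t))}\,\overline{w(\overline t)}+\epsilon\,\overline{G(z(\overline t),w(\overline t),\mu(\overline t),\epsilon)}.
\end{align*}
By \eqref{symFL}, $\overline{\nu(\mu)}=\lambda(\overline\mu)$, so the first term equals $\lambda(\tilde\mu(t))\tilde z(t)$. For the second term I apply $F(z,w,\mu,\epsilon)=\overline{G(\overline w,\overline z,\overline\mu,\epsilon)}$ at the arguments $(\tilde z,\tilde w,\tilde\mu)$. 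This gives $F(\tilde z,\tilde w,\tilde \mu,\epsilon)=\overline{G(w(\overline t),z(\overline t),\mu(\overline t),\epsilon)}$.

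Here there is a subtlety in the argument order. The computation produces $\overline{G(z,w,\mu,\epsilon)}$, whereas the identity supplies $\overline{G(w,z,\mu,\epsilon)}$. The main check is therefore to recompute from the definitions of $F$ and $G$ and see which combination actually appears. Using $\overline{R(x,y,\cdot)}=R(\overline x,\overline y,\cdot)$ for real-analytic $R$, we get $\overline{G(z,w,\mu,\epsilon)}=R(\tfrac{\overline z+\overline w}{2},\tfrac{\overline z-\overline w}{-2\i},\ldots)+\i S(\ldots)$. Evaluated at $\overline z=\tilde w$ and $\overline w=\tilde z$, the arguments become $\tfrac{\tilde z+\tilde w}{2}$ and $\tfrac{\tilde z-\tilde w}{2\i}$. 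This is precisely $F(\tilde z,\tilde w,\tilde\mu,\epsilon)$, so the swap in $(\tilde z,\tilde w)=(\overline w,\overline z)$ exactly compensates. Hence $\tilde z'=\lambda(\tilde\mu)\tilde z+\epsilon F(\tilde z,\tilde w,\tilde\mu,\epsilon)$, as required, and I will phrase this step carefully so that it is consistent with \eqref{symFL}.

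The equation for $\tilde w$ follows by the symmetric computation, using $\overline{\lambda(\mu)}=\nu(\overline\mu)$ and the conjugated form of the $F$/$G$ relation. For the converse direction, I observe that the map $(z,w,\mu)(t)\mapsto(\overline w,\overline z,\overline\mu)(\overline t)$ is an involution. Applying the forward direction to $(\tilde z,\tilde w,\tilde\mu)$ then recovers the original solution, which gives the ``only if''. The only real obstacle is the bookkeeping of argument orders in $F$ and $G$; everything else is routine.
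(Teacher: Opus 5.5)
Your proposal is correct and is essentially the argument the paper has in mind: the paper gives no proof, calling the lemma an obvious consequence of \eqref{symFL}, and your direct check of each equation, using that $t\mapsto\overline{f(\overline t)}$ is holomorphic and that the map is an involution for the converse, is exactly that verification. The ``subtlety'' you flag is a slip, though: applying \eqref{symFL} at $(\tilde z,\tilde w,\tilde\mu)$ gives $F(\tilde z,\tilde w,\tilde\mu,\epsilon)=\overline{G(\overline{\tilde w},\overline{\tilde z},\overline{\tilde\mu},\epsilon)}=\overline{G(z(\overline t),w(\overline t),\mu(\overline t),\epsilon)}$, which is precisely the term you need, so your detour through the definitions of $F$ and $G$ is unnecessary (though itself correct).
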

 
This symmetry relates the positive halfspace $\imag(\mu) \geq 0$ to the negative halfspace $\imag(\mu) \leq 0$.
 
Following \cite{krupa_extending_2001,neishtadt1987a,neishtadt1988a}, we will analyze \eqref{ComplexI} in a reparametrized form obtained by multiplication of the right hand side by $-\i\overline{\lambda(\mu)}$: 
\begin{equation}
    \begin{cases}\eqlab{ELI}
    z' &= -\i\vert\lambda(\mu)\vert^2 z  -\i\epsilon \overline{\lambda(\mu)}F(z,w,\mu,\epsilon),\\
    w' &= -\i\overline{\lambda(\mu)}\nu(\mu) w - \i\epsilon \overline{\lambda(\mu)} G(z,w,\mu,\epsilon),\\
     \mu' & = -\i \epsilon \overline{\lambda(\mu)}.
\end{cases}
\end{equation}
Crucially, the $\mu$-equation decouples. It takes the following form with respect to the slow time:
\begin{align}
\dot{\mu} = - \i\overline{\lambda (\mu)}.\eqlab{THIS}
\end{align}
The trajectories of \eqref{THIS} are called elliptic paths (or Stokes contours, see \cite[Remark 2.2]{hayes2015a}). The desirable property of these paths is that the linear part of the $z$-equation is oscillatory.
 
We consider our system under a set of assumptions. First, we assume the following:
\begin{enumerate}[label=(A\arabic*)]
    \item \label{A1} $\lambda(\mu)$ has a single root $\mu_*$ in $\Sigma$ which is simple. Moreover, $\real[\lambda(\mu)]\gtrless 0$ for $\mu\gtrless 0$, respectively, in $\Sigma \cap \mathbb R$.
\end{enumerate}
It is without loss of generality to assume that $\imag(\mu_*)>0$. (If not, then we can exchange $z$ and $w$, as well as $\lambda$ and $\nu$.) It follows from a simple calculation that when \ref{A1} holds then \eqref{THIS} has a unique hyperbolic saddle singularity at $\mu_*$ with stable and unstable manifolds. For further details, we refer to Section 2. Our next assumptions relate to these invariant manifolds in the complex $\mu$-plane.
\begin{enumerate}[resume*]
    \item \label{A2} The stable manifold of the point $\mu_*$ intersects the real axis in a unique point $\mu_s \in \Sigma$ with $\mu_s < 0$.
    \item \label{A3} The unstable manifold of the point $\mu_*$ intersects the real axis in a unique point $\mu_u \in \Sigma$ with $\mu_u >0$.
\end{enumerate}
Notice that $\overline{\mu_*}$ is a root of $\nu$ by \eqref{symFL}.

Our work can be seen as an extension of \cite{hayes2015a}. In this reference, the authors studied the (simplified) Shishkova problem 
\begin{equation}\eqlab{Shish}
    \begin{aligned}
        z' &= \lambda(\mu)z+\epsilon h(\mu),\\
        \mu'&=\epsilon,
    \end{aligned}
\end{equation}
with $h$ analytic for $\vert \imag(\mu) \vert \leq 2$, and where \begin{align}\eqlab{shislambda}\lambda(\mu)=\mu-\i.\end{align} (More precisely, they actually consider $\lambda(\mu)=\mu+\i$, but we prefer to have the root in the $\imag(\mu)>0$ halfplane and therefore consider the equivalent version \eqref{shislambda} instead). In particular, the authors of \cite{hayes2015a} presented a novel geometric approach for the description of the exponentially small splitting of the attracting and repelling slow manifolds, defined for $\mu<0$ and $\mu>0$, respectively. Their description is based upon writing \eqref{Shish} in a reparametrised form obtained by a multiplication of the right hand side by $-\overline{\lambda(\mu)}= - (\overline{\mu}+\i)$ such that 
\begin{equation}\eqlab{hayeshyp}
    \begin{aligned}
        z' &= -\vert\lambda(\mu)\vert^2z-\epsilon \overline{\lambda(\mu)}h(\mu),\\
        \mu'&=-\epsilon \overline{\lambda(\mu)}.
    \end{aligned}
\end{equation}
For this system with $\epsilon=0$, the set defined by $z=0$ is hyperbolic whenever $\lambda \neq 0$, and this enables the use of geometric singular perturbation theory (GSPT). Since $\lambda(\i)=0$, the point $(z,\mu) = (0,\i)$  is degenerate for \eqref{hayeshyp} for $\epsilon=0$, and it is therefore treated in \cite{hayes2015a} by a blowup transformation \cite{krupa_extending_2001}. Notice that $\mu$ decouples from \eqref{hayeshyp}, taking the following form 
\begin{equation}\eqlab{THIS2}
    \dot \mu = -\overline{\lambda(\mu)},
\end{equation}
with respect to slow time.
Trajectories of (\theequation) are called hyperbolic paths (or anti-Stokes contours, again see \cite[Remark 2.2]{hayes2015a}).
 
The system \eqref{ComplexI} can be seen as an extension of \eqref{Shish} with an additional equation of the complex conjugate $w$ of $z$. 
Notice that the perturbations in \eqref{ComplexI} are assumed to be fully nonlinear (in contrast to \eqref{Shish}) and we are interested in a local phenomena. Here we use "local" to contrast it to the situation in \cite{hayes2015a} where they fix global slow manifolds at $\mu \to \pm \infty$. 

In Shishkova's original work \cite{MR324157}, they actually studied a slightly more complicated problem than \eqref{Shish}:
\begin{equation}\eqlab{shishkov_general}
\begin{aligned}
z'  &=\lambda(\mu) z +\gamma z^2 w-\epsilon,\\
w' &=\nu(\mu) w +\gamma w^2z-\epsilon,\\
\mu'&=\epsilon,
\end{aligned}
\end{equation}
with $\lambda(\mu) = \mu-\i$, $\nu(\mu)=\mu+\i$, see also \cite[Eqs. (3.3)-(3.4)]{engler2026a} replacing their $(u^*,u)$ with $(z,w)$. It is possible to bring \eqref{shishkov_general} into \eqref{ComplexI}, 
but only away from $\mu=\pm \i$ (see Section \ref{sec:disc} below for a further discussion). The main result of \cite{MR324157} is that there is a maximal delay, in the sense that any slow manifold $(z,w)=\xi^a(\mu,\epsilon)$ with $\mu\le -1$ leaves a neighborhood of $(z,w)=(0,0)$ around $\mu=1$. The general idea of \cite{MR324157} is to use the solution of the associated linear problem and define an appropriate fixed point formulation for $\xi^a(\mu,\epsilon)$ with $\mu\in \mathbb C$ complex (in an appropriate domain).  Interestingly, the result and approach of  Shishkova was recently revisited in \cite{engler2026a}. This reference simplifies some aspects of the proof and obtain a more refined result. It is open whether Shishkova's approach can be applied to more general systems.

A significant difficulty in extending \eqref{Shish} to \eqref{ComplexI} stems from controlling the growth of the extra variable $w$. This complication was also noted in \cite{KRUPA20102841}, where the authors studied the slow passage through a Hopf in the analysis of the folded saddle node. As a result, it is crucial to use the elliptic paths, recall \eqref{ELI}, rather than the hyperbolic ones. This is supported by \cite{KRUPA20102841} and in line with the seminal work of \cite{neishtadt1987a,neishtadt1988a}. Notice that the exponential growth/decay of $w$ in \eqref{ELI} is determined (to leading order in $\epsilon$) by the quantity 
\begin{align}\eqlab{quantity}
\real[-\i\overline{ \lambda(\mu)}\nu(\mu)] = \real[\i\lambda(\mu){\lambda(\overline \mu)}].
\end{align}
For the expression on the RHS we have used \eqref{symFL}.
This motivates the definition of the following set  
\begin{equation}\nonumber
    \kC =\{\mu \in \Sigma :\real[-\i\overline{ \lambda(\mu)}\nu(\mu)]=0\},
\end{equation}
which separates exponential growth from exponential decay. 
Due \eqref{HopfC} and assumption \ref{A1}, we have $0 \in \kC$, $\mu_* \in \kC$ and $\overline{\mu_*} \in \kC$. We now make the following assumptions on $\kC$:
\begin{enumerate}[label=(B\arabic*)]
    \item \label{B1} $\kC$ contains a smooth and connected $1$-dimensional manifold (with boundary) $\kC_*\subset \kC$ which contains $\mu=0$ and has $\mu_*$ and $\overline{\mu_*}$ as it is boundary points. In other words, $\kC_*$ is a smooth curve without self-intersections that connects $\mu_*$ with $\overline{\mu_*}$ through $\mu=0$. Furthermore, $\kC_*$ is regular in the sense that the differential of $(a,b)\mapsto \real[-\i\overline{ \lambda(a+\i b)}\nu(a+\i b)]$ is non-zero for any $a+\i b\in \mathcal C_*$. 
    \item \label{B2} $\kC_*$ is non-tangent to the stable and unstable manifolds of $\mu_*$ and there are no contact points away from $\mu_*$ along $\kC_*$, i.e. the flow of the elliptic system \eqref{THIS} intersects the interior of $\kC_*$ transversely.
\end{enumerate}
The assumptions are illustrated in Figure \ref{EBox}. Notice that the absence of contact points in \ref{B2} forces $\kC_*$ to be contained within the triangular-like region (shaded in Figure \ref{EBox}) bounded by the two branches of the stable and unstable manifolds. This is a simple consequence of Rolle's theorem. 
 We then note that \ref{A1} and \ref{B1} implies that $\real[-\i\overline{ \lambda(\mu)}\nu(\mu)]$  is negative (positive) on the $\mu$-negative (positive, respectively) side of $\kC_*$. Finally, due to \eqref{symFL}, we have that $\overline{\kC_*}=\kC_*$. We now add our final assumption:
 \begin{enumerate}[resume*]
     \item \label{B3} $\kC_*$ is the only component of $\kC$ in the region bounded by the branches of the stable and unstable manifolds of $\mu_*$ and the real axis. 
 \end{enumerate}

\begin{figure}
    \centering
    \includegraphics[width=0.7\linewidth]{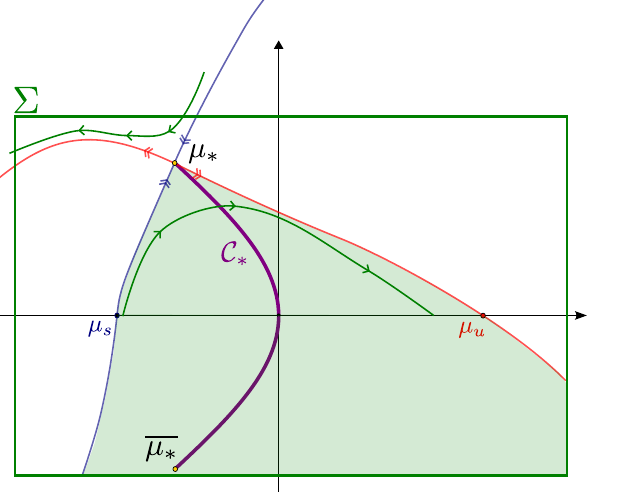}
    \caption{The phase portrait of the elliptic system \eqref{THIS} under the assumptions \ref{A1}-\ref{A3}, \ref{B1}-\ref{B3}. Notice that the point $\mu=\mu_*$ is a hyperbolic saddle and \ref{A2} and \ref{A3} assume that the associated invariant manifolds (blue and red) intersect the real axis in $\mu_s<0$ and $\mu_u>0$, respectively. Moreover, \ref{B1} and \ref{B2} assume that $\mathcal C_*$ (magenta), connecting $\overline \mu_*$ with $\mu_*$ through $\mu=0$, is not tangent to the invariant manifolds at $\mu_*$ and that the elliptic flow of \eqref{THIS} is transverse to $\mathcal C_*$ away from $\mu_*$. Finally, \ref{B3} says that there are no other components of $\kC$ in the shaded region.
    }
    \label{EBox}
\end{figure}

It is important to emphasize that $w$ is only the conjugate of $z$ when $\mu\in \mathbb R$. In particular, if $w(0)=\overline z(0)$ then by uniqueness of solutions we have that $w(\overline t)=\overline{z(t)}$, recall Lemma \ref{lemmasym}. In other words, if $z$ is known then we also know $w$ but on a separate path (when $t\notin \mathbb R$) in the complex $t$-plane. As a consequence, we have to treat $(z,w)$ as distinct variables when extending the slow manifolds into the complex plane (by following the flow of \eqref{ELI}). 
\subsection{Main results}
We now present the main results. For this, we fix any attracting slow manifold $(x,y) = \xi^a(\mu,\epsilon)$ defined for $\mu$ in a real neighborhood of $\mu_s$, $0 \leq \epsilon \ll 1 $. Similarly, we fix any repelling slow manifold  $(x,y) = \xi^r(\mu,\epsilon)$ defined for $\mu$ in a real neighborhood of $\mu_u$, $0 \leq \epsilon \ll 1 $. Here we have used \ref{A1}. Now, it is a basic fact (using \ref{A2}, see Proposition \ref{Extension1} below) that $\xi^a$ and $\xi^r$ both extend smoothly to the nonhyperbolic point $\mu=0$ (through the forward and backward flow, respectively). We can therefore define the difference 
\begin{equation}\nonumber
    \Delta(0,\epsilon) = \xi^a(0,\epsilon) - \xi^r(0,\epsilon), \quad 0 \leq \epsilon \ll 1.
\end{equation}
Our first result gives an asymptotic formula for $\Delta(0,\epsilon)$.
\begin{theorem}\label{TheoremSplitting}
    Consider \eqref{ReS1} and assume \ref{A1}-\ref{A3}, \ref{B1}-\ref{B3}, and that $$F_0 := F(0,0,\mu_*,0) \neq 0,$$ is non-zero. Moreover, define 
    \begin{align}\eqlab{lambda0defn}
    \lambda_0 := \lambda'(\mu_*),
    \end{align} and 
    \begin{equation*}
        W(\epsilon) := \frac{F_0}{\sqrt{\lambda_0}} \exp \left({-\i \epsilon^{-1} \imag\left[ \int_0^{\mu_*}\lambda(\mu)d\mu\right] -\int_0^{\mu_*}\partial_z F(0,0,\mu,0) d\mu} \right) \in \C.
    \end{equation*}
    Then 
    \begin{align}\eqlab{negativequantity}
        \operatorname{Re}\left[\int_0^{\mu_*}{\lambda(\mu)}d\mu\right]>0,
    \end{align}
    and  
    the splitting $\Delta(0,\epsilon) = (\Delta_x(0,\epsilon),\Delta_y(0,\epsilon))$ of the slow manifolds $(x,y)=\xi^{a,r}(\mu,\epsilon)$ of \eqref{ReS1} at $\mu=0$ has the following asymptotic expansion 
    \begin{equation}
        \begin{aligned}\eqlab{finalsplitting}
        \Delta_x(0,\epsilon) = &\sqrt{(2\pi \epsilon)} \exp\left({-\epsilon^{-1}\operatorname{Re}\left[\int_0^{\mu_*}{\lambda(\mu)}d\mu\right] }\right)\real\left[ W(\epsilon)(1+\mathcal O(\sqrt{\epsilon}) \right],\\
         \Delta_y(0,\epsilon) = &\sqrt{(2\pi \epsilon)} \exp\left({-\epsilon^{-1}\operatorname{Re}\left[\int_0^{\mu_*}{\lambda(\mu)}d\mu\right] }\right)\imag\left[ W(\epsilon)(1+\mathcal O(\sqrt{\epsilon}) \right].
        \end{aligned}
    \end{equation}
\end{theorem}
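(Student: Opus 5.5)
The plan is to follow the geometric strategy outlined in the introduction, working in the complex $\mu$-plane with the elliptic reparametrization \eqref{ELI}. The first step is the positivity \eqref{negativequantity}. Along the elliptic flow \eqref{THIS} we have $\frac{d}{dt}\int_0^{\mu(t)}\lambda\,d\mu=\lambda(\mu)\dot\mu=-\i|\lambda(\mu)|^2$, so $\operatorname{Re}\int\lambda\,d\mu$ is constant along elliptic paths. Consequently $\operatorname{Re}[\int_0^{\mu_*}\lambda\,d\mu]=\operatorname{Re}[\int_0^{\mu_s}\lambda\,d\mu]$, since $\mu_s$ lies on the stable manifold of $\mu_*$ (assumption \ref{A2}). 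The right-hand side is a real integral of $\real\lambda<0$ over $[\mu_s,0]$ with $\mu_s<0$, by \ref{A1}. It is therefore $-\int_{\mu_s}^0\real\lambda\,d\mu>0$.

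Next I would extend $\xi^a$ from a real neighbourhood of $\mu_s$ along the stable manifold of $\mu_*$, using the forward flow of \eqref{ELI}. On this path the $z$-linear part is purely oscillatory, so $z$ stays $\mathcal O(\epsilon)$. The $w$-growth rate $\real[-\i\bar\lambda\nu]$ is negative on the $\mu$-negative side of $\kC_*$, which is where the stable manifold lies by \ref{B2}--\ref{B3}, so $w$ is also controlled. Symmetrically, I would extend $\xi^r$ backward along the unstable manifold from $\mu_u$. Both extensions reach a neighbourhood of $\mu_*$. There the slow manifold $z=0$ loses hyperbolicity, and I would use a blowup $\mu=\mu_*+\epsilon^{1/2}\mu_2$, $z=\epsilon^{1/2}z_2$, as in Hayes et al. In the rescaling chart the leading-order $z_2$-equation is $\partial_{\mu_2}z_2=\lambda_0\mu_2z_2+F_0$, a Weber/Airy-type linear equation. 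The jump between the solution continued from the stable direction and the one continued from the unstable direction is the Gaussian integral $F_0\int e^{-\lambda_0 s^2/2}ds=F_0\sqrt{2\pi/\lambda_0}$, taken along the appropriate steepest-descent direction. This accounts for the factor $\sqrt{2\pi\epsilon}\,F_0/\sqrt{\lambda_0}$. The $w_2$-component is nonresonant near $\mu_*$ because $\nu(\mu_*)\ne0$, so it contributes only at higher order.

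I would then transport this $z$-difference from a neighbourhood of $\mu_*$ back to $\mu=0$ along $\kC_*$, or along any path in the shaded region. The plan is to linearize the difference equation $\epsilon\, d\Delta_z/d\mu=(\lambda+\epsilon\partial_zF)\Delta_z+\epsilon\partial_wF\,\Delta_w$. Solving it produces the propagator $\exp(-\epsilon^{-1}\int_0^{\mu_*}\lambda\,d\mu-\int_0^{\mu_*}\partial_zF\,d\mu)$. Its modulus and phase give the exponential prefactor together with the factor $e^{-\i\epsilon^{-1}\imag\int\lambda}$ appearing in $W$. The coupling to $\Delta_w$ must be shown to be of relative order $\mathcal O(\sqrt\epsilon)$. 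To do this I would use the fact that along $\kC_*$ the $w$-rate is neutral, and that $\Delta_w$ originates from the conjugate point $\overline{\mu_*}$ by Lemma \ref{lemmasym}. That lemma also gives that $\Delta_w(0)$ is the conjugate of $\Delta_z(0)$. Since $\Delta_z(0)=\Delta_x+\i\Delta_y$, taking real and imaginary parts yields \eqref{finalsplitting}.

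The main obstacle will be the matching at $\mu_*$. There I must rigorously connect the blowup chart to the entry and exit charts, and simultaneously control $w$, which in the entry/exit regions is hyperbolic with rate governed by the sign structure of $\kC$, while keeping errors of relative size $\mathcal O(\sqrt\epsilon)$. I would first attempt this with a fixed-point formulation in exponentially weighted norms along the elliptic paths, using \ref{B2} to guarantee that the paths cross $\kC_*$ transversally.
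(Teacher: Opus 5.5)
Your outline has the paper's architecture: continuation along elliptic paths with $w$ controlled by the sign of $\operatorname{Re}[-\mathrm{i}\overline{\lambda}\nu]$, an $\mathcal O(1)$ splitting in blowup variables at $\mu_*$, transport of the linearized difference along $\mathcal C_*$, and $\Delta_w(0)=\overline{\Delta_z(0)}$. Two ingredients are done differently.

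Your proof that $\operatorname{Re}\int_0^{\mu_*}\lambda\,d\mu>0$, via the conserved quantity $\mathcal H$ and $\mathcal H(\mu_s)=\mathcal H(\mu_*)$, is correct. It is simpler than the paper's argument, which writes $g'(\tau)\lambda(g(\tau))=(c_1+\mathrm{i}c_2)|\lambda|$ along $\mathcal C_*$ and needs $c_1>0$, hence (B1)--(B2). Yours uses only (A1)--(A2).

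For the inner problem you use the rescaling chart and $z_2'=\lambda_0\mu_2z_2+F_0$. Its two sectorial particular solutions differ by $F_0\sqrt{2\pi/\lambda_0}\,\mathrm{e}^{\lambda_0\mu_2^2/2}$, which is consistent with Lemma~\ref{SN1} after a change of chart. The paper instead stays in the directional charts. There the type II series are $1$-sums in the directions $\psi^S,\psi^U$, and gluing them to the type I series gives an approximate invariant manifold with remainder $\mathcal O(\epsilon^{2N})$, uniformly on a sector containing both $\theta^{S,E}$ and $\theta^C$. Combined with the propagator bound $\|\mathcal M(t)\mathcal M(s)^{-1}\|\le c\,\epsilon_1(0)^{-1}$, this turns the identification of $\xi^a$ with the stable-direction solution into a Gronwall estimate. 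That identification is exactly the matching you defer. Your chart gives explicit special functions, but the identification is the substance of the proof, and your plan leaves it open.

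The step whose stated justification is wrong is the decoupling on $\mathcal C_*$. Neutrality of $\operatorname{Re}[-\mathrm{i}\overline{\lambda}\nu]$ on $\mathcal C_*$ concerns growth along elliptic paths, and these cross $\mathcal C_*$ transversally. Along $\mathcal C_*$ itself, $\Delta_w$ is not neutral: it has rate $\operatorname{Re}[g'\nu]/\epsilon=-c_1|\nu|/\epsilon$, while $\Delta_z$ has rate $c_1|\lambda|/\epsilon$. What makes the coupling negligible is the dichotomy
\[
\operatorname{Re}\bigl[g'(\tau)\bigl(\lambda(g(\tau))-\nu(g(\tau))\bigr)\bigr]=c_1|\nu|(1-c_3)\ge c_1|\nu|>0,
\]
which is uniform up to $\mu_*$ because $\nu(\mu_*)\ne0$. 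It follows from two facts. First, $\lambda=c_3\nu$ with $c_3\le 0$ on $\mathcal C_*$; this is where the neutrality enters. Second, $c_1>0$, which is the absence of contact points in (B2), not merely transversality in the continuation step. At a contact point both rates become purely imaginary and the argument fails.

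The paper uses this normal hyperbolicity to diagonalize, applying Fenichel's theory to $\widetilde\Delta=\Delta_z/\Delta_w$. A direct variation-of-constants argument also works once you have the dichotomy. However, on $[0,\tau_*]$ you cannot bound $\Delta_w$ by $|\Delta_w(0)|$. It contains a part slaved to $\Delta_z$ of size $\mathcal O(\epsilon)|\Delta_z|$, which is exponentially larger than $|\Delta_w(0)|$ near $\mu_*$. This part must be handled self-consistently; it produces the $\mathcal O(\epsilon^2)$ correction to the rate of $a$.

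Finally, transporting along ``any path in the shaded region'' is not available. The continuation estimates control $\xi^a$ only up to $\mathcal C_*$ from the $\mu$-negative side, and $\xi^r$ only from the positive side. The dichotomy must also hold along whatever curve you use.
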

 \begin{remark}\label{nonuniqueness}
Notice that:
\begin{equation}
    \vert W(\epsilon) \vert = \left\vert \frac{F_0}{\sqrt{\lambda_0}} \right\vert  \exp \left({-\real\left[\int_0^{\mu_*}\partial_z F(0,0,\mu,0) d\mu \right]} \right), 
\end{equation}
for all $\epsilon>0$. Moreover, the slow manifolds are not unique, but for the statement it is only important that the attracting and repelling slow manifolds are initially fixed in real neighborhoods of $\mu_s$ and $\mu_u$, respectively, as described above. In contrast, if one instead fixes slow manifolds as graphs over compact intervals contained within $(\mu_s,0)$ and $(0,\mu_u)$, respectively, then their splitting at $\mu=0$ will in general be further apart. We are not concerned with the details of such splittings. 
\end{remark}

The following result gives sufficient conditions for \ref{A1}-\ref{A3} and \ref{B1}-\ref{B3}.
\begin{proposition}\label{mainprop}
    Consider \eqref{ReS1} with 
\begin{align}\eqlab{alphabetalin}
    \begin{cases} \alpha(\mu) = \real(\lambda_0) \mu,\\ \beta(\mu) = -\imag(\lambda_0)\mu + 1,
    \end{cases}
\end{align}
for  $\real(\lambda_0)>0$, 
    so that 
    \begin{align}\eqlab{lambdanulin}
    \begin{cases} \lambda(\mu) = \lambda_0\mu -\i,\\
     \nu(\mu) = \overline{\lambda_0}\mu +\i, 
     \end{cases}
\end{align}
and $$\mu_* = \frac{\i}{\lambda_0}.$$
 Then \ref{A1}-\ref{A3}, \ref{B1}-\ref{B3}  all hold true (with $\mu_s= -\mu_u= -\vert \lambda_0\vert^{-1}$) if and only if 
\begin{align}\eqlab{condlambda12}
    {\real(\lambda_0)}>\sqrt{3}\vert {\imag(\lambda_0)}\vert.
\end{align}

\end{proposition}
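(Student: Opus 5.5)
Since $\lambda$ is affine, everything in \ref{A1}--\ref{A3} and \ref{B1}--\ref{B3} can be computed explicitly. The plan is to reduce the problem to a one-parameter family and then check each assumption by elementary plane geometry.

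\textbf{Normalization.} Write $\lambda_0=r\e^{\i\varphi}$ with $r>0$ and $|\varphi|<\pi/2$; this range is exactly $\real(\lambda_0)>0$. The rescaling $\mu\mapsto r^{-1}\mu$ (together with a constant rescaling of time) maps the elliptic system \eqref{THIS} and the set $\kC$ for $\lambda_0$ onto those for $\e^{\i\varphi}$. It preserves all the assumptions and scales $\mu_s,\mu_u$ by $r^{-1}$. So it suffices to take $r=1$. Condition \eqref{condlambda12} then reads $|\tan\varphi|<1/\sqrt3$, i.e. $|\varphi|<\pi/6$. A reflection argument (replacing $\lambda_0$ by $\overline{\lambda_0}$ combined with $\mu\mapsto-\overline\mu$ and time reversal, which exchanges the roles of the stable and unstable manifolds) should further reduce to $\varphi\ge 0$.

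\textbf{Assumptions \ref{A1}--\ref{A3}.} \ref{A1} is immediate. The unique root is $\mu_*=\i/\lambda_0=\i\e^{-\i\varphi}$, and it is simple. Moreover $\imag\mu_*=\cos\varphi>0$, and on the real axis $\real\lambda(\mu)=\real(\lambda_0)\mu$.

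Next, \eqref{THIS} reads $\dot\mu=1-\i\overline{\lambda_0}\,\overline{\mu}$. This is an affine, $\R$-linear vector field in $(a,b)=(\real\mu,\imag\mu)$. Its linear part is $\mu\mapsto c\overline\mu$ with $c=-\i\overline{\lambda_0}$. That map has eigenvalues $\pm|c|=\pm1$, with eigendirections $\e^{\i\theta}$ satisfying $\e^{2\i\theta}=\pm c/|c|$. Hence $\mu_*$ is a saddle whose stable and unstable manifolds are straight lines. Intersecting these lines with $\R$ gives unique points, and a direct check shows they are $\mp1=\mp|\mu_*|$. (They are the endpoints of chords of the circle $|\mu|=|\mu_*|$.) This yields \ref{A2} and \ref{A3}, with $\Sigma$ chosen large enough to contain the triangle with vertices $\mu_*,\overline{\mu_*},\pm1$.

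\textbf{The set $\kC$ and \ref{B1}.} Expanding gives
\[
\real[\i\lambda(\mu)\lambda(\overline\mu)]=2\real(\lambda_0)\,a-2\real(\lambda_0)\imag(\lambda_0)(a^2+b^2).
\]
Thus $\kC$ is the circle $a=k(a^2+b^2)$ with $k=\imag\lambda_0$, or the line $a=0$ when $k=0$. This curve passes through $0$, $\mu_*$ and $\overline{\mu_*}$. The gradient $(1-2ka,-2kb)$ never vanishes on it, so $\kC_*$ is the arc from $\mu_*$ through $0$ to $\overline{\mu_*}$, and \ref{B1} holds.

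\textbf{Conditions \ref{B2}, \ref{B3} and the role of $\pi/6$.} It remains to show that \ref{B2} and \ref{B3} hold if and only if $\varphi<\pi/6$.
\begin{enumerate}
\item \emph{Tangency at $\mu_*$.} I will compute the angle between the circle's tangent at $\mu_*$ and the two eigendirections $\e^{\i\theta}$. I expect the circle to be tangent to one of the invariant lines exactly when $\varphi=\pi/6$.
\item \emph{Contact points on the interior of $\kC_*$.} The contact set is $\{V\cdot\nabla g=0\}$, where $V=1-\i\overline{\lambda_0}\,\overline\mu$ and $g=a-k(a^2+b^2)$. This is a conic, and I will intersect it with the circle. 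One intersection point is $\mu_*$, and conjugation symmetry should produce the partner points. I will then locate the remaining intersections in terms of $\varphi$. The expected picture: for $\varphi<\pi/6$ they lie on the complementary arc, outside the triangle bounded by the invariant lines and $\R$. As $\varphi\uparrow\pi/6$ they reach $\mu_*$. For $\varphi>\pi/6$ they enter the interior of $\kC_*$.
\item \emph{\ref{B3}.} For $\varphi<\pi/6$, the non-tangency at $\mu_*$ together with convexity of the disc shows that the complementary arc leaves the shaded region immediately at $\mu_*$ and at $\overline{\mu_*}$. It can re-enter only by crossing the real axis inside $(-1,1)$, and since the circle meets $\R$ only at $0$ and $1/k$, this does not happen.
\item \emph{Converse.} For $\varphi\ge\pi/6$ I will exhibit the failure directly: tangency at $\mu_*$ when $\varphi=\pi/6$, and an interior contact point (equivalently, $\kC_*$ exiting the triangle, which contradicts the Rolle argument after \ref{B2}) when $\varphi>\pi/6$.
\end{enumerate}

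The main obstacle is this last trigonometric computation: locating the contact points on the circle as functions of $\varphi$ and verifying that the threshold is exactly $\tan\varphi=1/\sqrt3$. I would first test the tangency-at-$\mu_*$ computation, since it should produce the $\sqrt3$ directly.
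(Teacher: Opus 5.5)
Your plan is correct and is essentially the paper's argument. The paper likewise identifies $\kC$ as the circle $a=k(a^2+b^2)$ and fixes the sign of $\imag(\lambda_0)$ by $\mu\mapsto-\overline\mu$ with time reversal. The threshold then comes from the contact point $\mu_{ct}$ crossing $\mu_*$. In your normalization this is $\mu_{ct}=\frac{1}{2k}\e^{\i\pi/3}$, which sits at angle $2\pi/3$ as seen from the centre $\frac{1}{2k}$, while $\mu_*$ sits at angle $\pi-2\varphi$. They coincide exactly when $\varphi=\pi/6$, which is also where $\kC$ becomes tangent to an invariant line at $\mu_*$.

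The one step to tighten is your argument for \ref{B3}. The complementary arc could a priori re-enter the triangle through the invariant lines, not only through $\R$. So you must also check that the second intersections of $\kC$ with these lines lie off the triangle's sides when $\varphi<\pi/6$. For the unstable line this is immediate: $\mu_u=1$ is an interior point of the disc because $1/k>1$, so the second intersection lies below $\R$. For the stable line it is a short direct computation.
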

Notice that once we have assumed that there is a Hopf at $\mu=0$, then it is without loss of generality (through a rescaling of time) to take $\lambda(0)=-\i$, $\nu(0)=\i$ (as in \eqref{lambdanulin}). We also note that the quantity \eqref{negativequantity} takes the following form:
\begin{align*}
    \operatorname{Re}\left[\int_0^{\mu_*}{\lambda(\mu)}d\mu\right] = \frac{\real(\lambda_0)}{2\vert \lambda_0\vert^2},
\end{align*}
for \eqref{lambdanulin}. The case \eqref{shislambda} studied in \cite{hayes2015a} corresponds to $\lambda_0=1$. It is then easy to see that \eqref{finalsplitting} with $\lambda_0=1$ and $F$ independent of $z$ agrees with \cite[Theorem 2.1]{hayes2015a}.

The conditions \ref{A1}-\ref{A3}, \ref{B1}-\ref{B3}  are robust (structurally stable), and by Proposition \ref{mainprop}, Theorem \ref{TheoremSplitting} therefore applies to \eqref{ReS1} with real-analytic perturbations of \eqref{alphabetalin}  satisfying \eqref{condlambda12}. 

To prove Theorem \ref{TheoremSplitting}, we work in the $(z,w)$-coordinates of \eqref{ComplexI}. In further details, our strategy is to extend the corresponding $\xi^a$ using the elliptic flow of \eqref{ELI} all the way up to $\kC_*$. (For simplicity, we continue to use the same symbol for $\xi^{a,r}$ when expressed in the $(z,w)$-coordinates.) Here we use \ref{A2}, \ref{A3} and \ref{B2}-\ref{B3}. In particular, we use that $\real[-\i\overline{ \lambda(\mu)}\nu(\mu)]<0$ on the $\mu$-negative side of $\kC_*$ in order to control the growth of $w$. Similarly, the corresponding $\xi^r$ is extended to $\kC_*$ using the backward elliptic flow of \eqref{ELI}. Here, we analogously use that $\real[-\i\overline{ \lambda(\mu)}\nu(\mu)]>0$ on the $\mu$-positive side of $\kC_*$.  In this way, we can define 
\begin{equation}\nonumber
    \Delta(\mu,\epsilon) = \xi^a(\mu,\epsilon) - \xi^r(\mu,\epsilon),
\end{equation}
for $\mu$ on a subset of $\kC_*$. In order to extend $\Delta(\mu,\epsilon)$ for $\mu\in \kC_*$ close to $\mu_*$, uniformly with respect to $0<\epsilon\ll 1$, we consider (as in \cite{hayes2015a}) a blowup of the degenerate point $(z,w,\mu,\epsilon) = (0,0,\mu_*,0)$, recall \ref{A1}, of the following form 
\begin{equation}\eqlab{BlowUpIntro}
    \begin{cases}
         z  =r \breve{z},\\
        w = r \breve{w},\\
        \mu = \mu_*+r\breve{\mu},\\
        \epsilon =r^2\breve{\epsilon},
    \end{cases}
\end{equation}
with $r \geq 0, (\breve{z},\breve{w},\breve{\mu},\breve{\epsilon}) \in \S^3\subset \mathbb C^4$. Here $\S^3$ denotes the complex 3 sphere. In further details, we demonstrate that the attracting and repelling slow manifolds are asymptotic to certain special solutions on sectors of the blowup sphere. In this way, we can determine an order one splitting (in the blowup coordinates)  near $\mu_*$ (as $\epsilon\to 0$) under the assumption that $F_0 \neq 0$. Our construction here follows \cite{kriszm1} using a combination of different asymptotic series in charts associated with the blowup. Knowing the splitting near $\mu_*$, we can then derive a linear equation for the difference on $\kC_*$ via the mean value theorem. By going to projective variables and using assumption \ref{B2} (no contact points), this linear equation can be diagonalized globally by Fenichel's theory \cite{fen3,jones_1995}.  Integrating this system gives the final result. This approach for treating the difference follows \cite{kristiansenzerophopf2,kristianZeroHopfArbitrary}.

Now, Theorem \ref{TheoremSplitting} gives the splitting of the invariant slow manifolds at $\mu=0$. We can extend $\xi^a(\mu,\epsilon)$ to $\mu>0$ by application of the forward flow of \eqref{ReS1}. In this way, we can also extend the (real) difference $\Delta(\mu,\epsilon)=(\Delta_x(\mu,\epsilon),\Delta_y(\mu,\epsilon))$ to $\mu>0$ small enough. In the following corollary, we are interested in the point $\mu>0$ where $\Delta(\mu,\epsilon) = \kO(1)$.
\begin{cor}\label{Delay}
    Consider \eqref{ReS1} and assume \ref{A1}-\ref{A3}, \ref{B1}-\ref{B3} and that $F_0\neq 0$. Fix slow manifolds $(x,y)=\xi^{a,r}(\mu,\epsilon)$ near $\mu=\mu_s$ and $\mu=\mu_u$, respectively, as above and any real number $\delta>0$ small enough. Then for $0 \ll \epsilon < 1$ sufficiently small there exists a unique $\mu_\delta(\epsilon) >0$, given by 
    \begin{equation}\eqlab{Deq0}
        \mu_\delta(\epsilon) = \mu_u - \frac{\epsilon \log(\epsilon)}{2\alpha(\mu_u)}(1+o(1)),
    \end{equation}
    such that
    \begin{equation}\nonumber
        \vert \Delta(\mu_\delta,\epsilon) \vert= \delta.
    \end{equation}
\end{cor}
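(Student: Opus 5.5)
Starting from Theorem \ref{TheoremSplitting}, we follow the real difference $\Delta(\mu,\epsilon)$ for $\mu\ge 0$ along the real flow of \eqref{ReS1}. Subtracting the equations for $\xi^a$ and $\xi^r$ and applying the mean value theorem gives a linear nonautonomous equation
\begin{equation*}
\epsilon\frac{d\Delta}{d\mu} = \bigl(A(\mu) + \epsilon B(\mu,\epsilon)\bigr)\Delta,\qquad A(\mu)=\begin{pmatrix}\alpha(\mu)&\beta(\mu)\\-\beta(\mu)&\alpha(\mu)\end{pmatrix}.
\end{equation*}
This holds as long as $\Delta$ stays small, i.e.\ $|\Delta|\le\delta$. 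Here $B$ is bounded, because $\xi^r$ is $\kO(\epsilon)$-close to $(0,0)$ on $[0,\mu_u]$ and slightly beyond, and $\xi^a$ stays within the $\delta$-tube.

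Passing to polar form, or equivalently to the complex variable $\Delta_z=\Delta_x+\i\Delta_y$, the modulus satisfies
\begin{equation*}
\epsilon\frac{d}{d\mu}\log|\Delta| = \alpha(\mu)+\kO(\epsilon).
\end{equation*}
The $\kO(\epsilon)$ term is uniform in the angle, since the rotation part $\beta$ drops out of the modulus equation. Integrating from $0$ to $\mu$ gives
\begin{equation*}
|\Delta(\mu,\epsilon)| = |\Delta(0,\epsilon)|\,\exp\Bigl(\epsilon^{-1}\!\int_0^{\mu}\alpha(s)\,ds + \kO(1)\Bigr).
\end{equation*}
By Theorem \ref{TheoremSplitting} we also have
\begin{equation*}
|\Delta(0,\epsilon)| = \sqrt{2\pi\epsilon}\,|W(\epsilon)|(1+\kO(\sqrt\epsilon))\,e^{-\epsilon^{-1}\real\int_0^{\mu_*}\lambda},
\end{equation*}
where $|W|$ is a nonzero $\epsilon$-independent constant by Remark \ref{nonuniqueness} and $F_0\neq0$. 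The condition $|\Delta|=\delta$ therefore becomes
\begin{equation*}
\int_0^{\mu}\alpha(s)\,ds - \real\int_0^{\mu_*}\lambda(s)\,ds = -\tfrac12\epsilon\log\epsilon + \kO(\epsilon).
\end{equation*}

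The key identity is $\int_0^{\mu_u}\alpha(s)\,ds=\real\int_0^{\mu_*}\lambda(s)\,ds$. To show it, note that along elliptic paths $\frac{d}{dt}\int^{\mu}\lambda = \lambda\dot\mu = -\i|\lambda|^2$, which is purely imaginary. Hence $\real\int^{\mu}\lambda$ is conserved along trajectories of \eqref{THIS}, and in particular along the unstable manifold of $\mu_*$. By \ref{A3} this manifold connects $\mu_*$ to $\mu_u$, which gives $\real\int_{\mu_*}^{\mu_u}\lambda=0$. Since $\alpha=\real\lambda$ on $\R$, the identity follows. This same argument presumably also underlies \eqref{negativequantity}.

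With the identity in hand, the equation becomes $G(\mu):=\int_{\mu_u}^{\mu}\alpha = -\tfrac12\epsilon\log\epsilon+\kO(\epsilon)$. Since $G'(\mu_u)=\alpha(\mu_u)>0$ by \ref{A1}, $G$ is strictly increasing near $\mu_u$, so the inverse function theorem gives a unique solution
\begin{equation*}
\mu_\delta=\mu_u-\frac{\epsilon\log\epsilon}{2\alpha(\mu_u)}(1+o(1)).
\end{equation*}

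Uniqueness on all of $\mu>0$ also needs monotonicity. The modulus equation shows $\log|\Delta|$ is strictly increasing wherever $\alpha(\mu)\gg\epsilon$. Near $\mu=0$, where $\alpha$ is small, $|\Delta|$ is exponentially small and cannot reach $\delta$. So $|\Delta|=\delta$ is attained exactly once.

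The main obstacle is justifying the linearized estimate up to $|\Delta|=\delta$ with $\delta$ fixed rather than $o(1)$. Near that point the nonlinear terms contribute $\kO(\delta)$ relative corrections to the exponent rate, but only over a $\mu$-interval of length $\kO(\epsilon\log\epsilon^{-1})$, because the growth there is exponential. Their effect on $\mu_\delta$ is therefore $\kO(\epsilon)$, which is absorbed in the $o(1)$. I would control this by a bootstrap over the time interval on which $|\Delta|\le\delta$, using the Gronwall-type bounds above.
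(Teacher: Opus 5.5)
Your proposal is correct and follows essentially the same route as the paper. The paper also writes the difference equation via the mean value theorem and strips off the rotation $\Omega$ together with the factor $\sqrt{\epsilon}\exp\bigl(\epsilon^{-1}(\int_0^\mu\alpha-\real\int_0^{\mu_*}\lambda)\bigr)$, which is the same computation as your modulus equation; it then solves $\mathcal H(\mu)=-\tfrac12\epsilon\log\epsilon+\mathcal O(\epsilon)$ near the simple root $\mu_u$ of the conserved quantity $\mathcal H$ by the implicit function theorem. Your closing ``main obstacle'' is moot: in \eqref{ReS1} every nonlinear term carries a factor $\epsilon$, so the bounded mean-value matrix already gives an $\mathcal O(\epsilon)$ rate correction uniformly on $|\Delta|\le\delta$, and there is no $\mathcal O(\delta)$ relative correction to control.
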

The proof is obtained by transporting the difference $\Delta(\mu,\epsilon)$ at $\mu=0$, described in Theorem \ref{TheoremSplitting}, forward. In further details, we derive an equation for $\Delta(\mu,\epsilon)$ which is dominated by a linear part. The result is then obtained by Gronwall's inequality and the implicit function theorem.
\subsection{Discussion}\label{sec:disc}
The quantity \eqref{Deq0} gives a precise notion of the maximal delay. The point $\mu_u$ is also known as a buffer point \cite{hayes2015a}. For $\mu>0$, our results implies that the splitting in Theorem \ref{TheoremSplitting} is exponentially small for $\mu<\mu_u$ uniformly bounded away from $\mu_u$, rapidly transitions to $\kO(1)$ in a $\epsilon \log(\epsilon)$-neighborhood of $\mu_u$, and finally grows exponentially (unbounded) for $\mu>\mu_u$. In conclusion, $\mu_u$ acts as a buffer point from where solutions cannot stay close to repelling the slow manifolds. A similar results holds under the backwards flow, where $\mu_s$ acts a buffer point in backwards time.
 
In line with Remark \ref{nonuniqueness}, if we were to fix an attracting slow manifold as a graph over a compact interval contained within $(\mu_s,0)$, then there will in general be a shorter delay. This is due to the fact that in this case we cannot (in general) extend the invariant manifold to $\mu_*$ by following the elliptic flow, see Figure \ref{EBox}. This is also described in Neishtadt's work \cite{neishtadt1987a} for general systems in $\mathbb R^n$.

Although we are inspired by Neishtadt's work \cite{neishtadt1987a} and his use of elliptic paths, our results are different. In particular, we obtain explicit, quantitative asymptotic formulas for the splitting of the slow manifolds in a simpler setting in $\mathbb R^3$. This contrasts the qualitative description in \cite{neishtadt1987a} for general systems. 

It is important to emphasize that although \eqref{ReS1} is a local normal form for the slow passage through the Hopf in $\mathbb R^3$ (see \appref{app}), our (global) assumptions  \ref{A2}-\ref{A3}, \ref{B1}-\ref{B3}, are (as already pointed out in \cite{neishtadt1987a}) not the general case. In particular, the construction in \appref{app} breaks down near roots of $\lambda$ and $\nu$. This can also be seen immediately when trying to bring \eqref{shishkov_general} into the form of \eqref{ComplexI}.


Therefore to obtain \eqref{ReS1} from a general slow passage through a Hopf in $\mathbb R^3$ requires additional assumptions (e.g. a globally defined critical manifold and perturbations of $\mathcal O(\epsilon^2)$, see further discussion in Remark \ref{RemA1}, see \appref{app}).

During the time of writing, we were made aware of the following preprint \cite{neishtadt2026maximaldelaystabilityloss} by Neishtadt, which also pursues maximal delay, but in general systems. Here $\mu_*$ is instead assumed to be a fold singularity of the critical manifold in the complex plane. Neishtadt argues that this is the general case. At the same time, we notice that Neishtadt also includes an assumption on the lack of contact points (written in the form of $\lambda(\mu)\ne \nu(\mu))$).
 
In any case, our approaches are different and we also aim to pursue our geometric approach for the generic systems studied in \cite{neishtadt2026maximaldelaystabilityloss} in future work. We are confident that this is possible. 
In fact, our method of proof is also applicable for Shishkova's original system \eqref{shishkov_general}. This can be checked explicitly by following the proof of Theorem \ref{TheoremSplitting} step by step. The splitting in the Shishkova system follows almost identically with some minor modifications to Proposition \ref{Extension1}, \ref{E4}, \ref{Coordinates2} and Lemma \ref{lemma45}. The asymptotic expansion of the splitting is still given by the expressions in Theorem \ref{TheoremSplitting} with:
\begin{equation*}
    W(\epsilon)=-1,\quad \int_{0}^{\mu_*} \lambda(\mu) d\mu = \frac{1}{2}.
\end{equation*}
For further details, we refer to the forthcoming PhD-thesis of the first author.

Finally, we mention that our approach breaks down near contact points. Firstly, if there are contact points, then we cannot use our approach to follow the elliptic flow up to $\kC_*$. Figure \ref{KSystemFig} below illustrates this point, see Section \ref{proofprop} for further details. Moreover, contact points are also an obstacle to our approach to the difference. Indeed, at contact points the equation for the difference loses normal hyperbolicity, see Section \ref{secdiff} for further details. We aim to study contact points in future work.

\subsection{Outline}
The paper is structured as follows: In Section \ref{basics}, we first prove that the elliptic and hyperbolic systems have hyperbolic singularities near $\mu=\mu_*$ under the assumption \ref{A1}. Moreover, we introduce certain angles associated with tangents of the invariant manifolds and $\kC_*$ at $\mu_*$, that will be important for the blowup (using polar coordinates near $\mu_*$). Next in Section \ref{asympbu}, we describe formal invariant manifolds, first uniformly bounded away from $\mu_*$ and then subsequently near $\mu_*$ using blowup and two separate charts.  The formal series in Section \ref{asympbu} are crucial for our analytic extensions of the slow manifolds to $\kC_*$ in Section \ref{secextension}. Here we first extend the fixed attracting slow manifold to the non hyperbolic point $\mu = 0$ using the flow of \eqref{ComplexI}, see Proposition \ref{Extension1}. Next, we use the extended slow manifold as initial conditions for the flow of \eqref{ELI} for $\mu \in (\mu_s,0)$ bounded uniformly away from $\mu_s$. In this way, the attracting slow manifold is extended to a sub arc of $\kC_*$ bounded away from $\mu_*$, see Figure \ref{EBox} and Proposition \ref{E4}. Finally, we consider initial conditions close to $\mu_s$ and extend the attracting slow manifold to the blowup of $\mu_*$, see Proposition \ref{F4Res}. Here we show that the attracting slow manifold is determined by a special solution (related to the formal series of Section \ref{asympbu}) on a sector centered around the blowup of $\mu_*$. The repelling slow manifold is treated similarly by considering the backwards flow. In this way, we determine the leading order of the splitting close to $\mu = \mu_*$ by the difference of the special solutions, see Proposition \ref{finalprop}.
In Section \ref{secdiff}, we complete the proof of Theorem \ref{TheoremSplitting} by setting up a normal form for the difference on $\kC_*$. Finally, in Sections \ref{proofprop} and \ref{secdelay}, we prove Proposition \ref{mainprop} and Corollary \ref{Delay}, respectively. 
\section{Basic properties}\label{basics}
In this section, we perform the local analysis of the singular point $\mu_*$ for \eqref{THIS} and \eqref{THIS2}.
First, we consider the invariant manifolds of the elliptic system. 
\begin{lemma}\label{Lemma21}
    Assume \ref{A1}. Then $\mu_*$ is a hyperbolic saddle of the elliptic system \eqref{THIS}, repeated here for convenience  
    \begin{equation*}
    \mu ' = -\i \overline{\lambda(\mu)}.
\end{equation*}
    Let
    $\mu = \mu_* + r \e^{\i\theta}$ and $\lambda_0=\lambda'(\mu_*) \neq 0$, recall \eqref{lambda0defn}.
    Then the tangents of the stable and unstable manifolds at $\mu_*$ make the following angles $\theta$ with the positive real axis 
    \begin{equation}\eqlab{thetasu}
    \theta^{S,E} = - \frac{1}{2}\arg(\lambda_0) - \frac{3\pi}{4}, \quad \theta^{U,E} = - \frac{1}{2}\arg(\lambda_0) - \frac{\pi}{4},
\end{equation}
respectively.
\end{lemma}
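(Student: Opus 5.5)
The plan is to linearize the elliptic vector field \eqref{THIS} at $\mu_*$ as a real planar system and read off its eigenvalues and eigendirections directly in polar form. By \ref{A1}, $\mu_*$ is a simple root of the analytic function $\lambda$, so $\lambda_0=\lambda'(\mu_*)\neq 0$. Setting $\mu=\mu_*+\eta$ gives $\lambda(\mu)=\lambda_0\eta+\kO(\eta^2)$. Hence
\begin{equation*}
\dot\eta = -\i\,\overline{\lambda_0}\,\overline{\eta}+\kO(\vert\eta\vert^2).
\end{equation*}
In particular $\mu_*$ is an equilibrium. Since $\lambda$ has no other root in $\Sigma$, it is the only one there.

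The linear part $L\eta=-\i\overline{\lambda_0}\,\overline\eta$ is $\R$-linear but antilinear over $\C$. I would view it as a real $2\times 2$ matrix acting on $(\real\eta,\imag\eta)$. Any map of the form $\eta\mapsto c\overline\eta$ corresponds to the matrix
\begin{equation*}
\begin{pmatrix}\real c & \imag c\\ \imag c & -\real c\end{pmatrix},
\end{equation*}
which has trace $0$ and determinant $-\vert c\vert^2$. With $c=-\i\overline{\lambda_0}$ the eigenvalues are therefore $\pm\vert\lambda_0\vert\neq 0$, real and of opposite sign. So $\mu_*$ is a hyperbolic saddle. The stable and unstable manifold theorem then gives analytic invariant curves tangent to the corresponding eigenlines.

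To locate the eigenlines, I would look for real $\sigma$ and angle $\theta$ with $L e^{\i\theta}=\sigma e^{\i\theta}$. Writing $\lambda_0=\vert\lambda_0\vert e^{\i\phi}$ with $\phi=\arg(\lambda_0)$, this condition becomes
\begin{equation*}
\sigma = -\i\,\overline{\lambda_0}\,e^{-2\i\theta}=\vert\lambda_0\vert\, e^{-\i(\pi/2+\phi+2\theta)}.
\end{equation*}
This is real exactly when $\pi/2+\phi+2\theta\in\pi\Z$.

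The choice $\pi/2+\phi+2\theta=0$ gives $\sigma=\vert\lambda_0\vert>0$, i.e. the unstable direction $\theta^{U,E}=-\tfrac12\phi-\tfrac{\pi}{4}$. The choice $\pi/2+\phi+2\theta=\pi$ gives $\sigma=-\vert\lambda_0\vert<0$, i.e. the stable direction. This yields $\theta^{S,E}=-\tfrac12\phi+\tfrac{\pi}{4}$, which is the same line as $-\tfrac12\phi-\tfrac{3\pi}{4}$. This is the form in \eqref{thetasu}.

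There is no real obstacle in this argument. The only point requiring care is the bookkeeping of angles. Each tangent line is determined only modulo $\pi$, and $\arg(\lambda_0)$ is determined only modulo $2\pi$, so $\tfrac12\arg(\lambda_0)$ is again defined modulo $\pi$. I would therefore state explicitly that \eqref{thetasu} specifies the tangent lines, i.e. the angles are taken modulo $\pi$, for a fixed branch of $\arg$. I would also note that the specific representatives in \eqref{thetasu} are presumably chosen to pick out the branches that head toward the real axis, as in \ref{A2}--\ref{A3}; this is used only later in the blowup.
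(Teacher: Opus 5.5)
Your proposal is correct and follows the paper's proof: linearize to $\dot\eta=-\i\overline{\lambda_0}\,\overline{\eta}$, treat it as a real traceless $2\times 2$ system with eigenvalues $\pm\vert\lambda_0\vert$, and read the stable and unstable tangent lines off the eigendirections. Solving $L\e^{\i\theta}=\sigma\e^{\i\theta}$ in polar form is a cleaner version of the paper's Cartesian eigenvector computation, and it avoids a sign slip there: the paper lists the eigenvectors as $(-\real\lambda_0,-\imag\lambda_0\mp\vert\lambda_0\vert)$, whereas the correct ones, which are the ones consistent with the stated angles \eqref{thetasu}, are $(-\real\lambda_0,\imag\lambda_0\pm\vert\lambda_0\vert)$.
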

\begin{proof}
    It suffices to consider the linearisation 
    \begin{equation}\eqlab{Esys}
    \dot \mu = -\i \overline{\lambda_0} \overline{\mu}.
\end{equation}
Writing $\mu = a + \i b$ one obtains a real linear system with the following eigenvalues  and eigenvectors of the linearisation: $\pm \vert\lambda_0\vert$ and $(a,b) = (- \real[\lambda_0],-\imag[\lambda_0]\mp \vert \lambda_0 \vert)$, respectively. This follows from a simple calculation and from these expression one easily obtains the explicit form for the angles.
\end{proof}

We illustrate the invariant manifolds and the angles $\theta$ associated with the tangents at $\mu_*$ in Figure \ref{Anglesa}. Notice that the angle $\theta^C$, associated with the tangent of the contour $\kC_*$ at $\mu_*$, satisfies the inclusion:
\begin{align}\eqlab{thetaCin}
    \theta^C\in (\theta^{S,E},\theta^{U,E}).
\end{align}
This follows from assumption \ref{B2} (recall also the discussion proceeding \ref{B2}), see Figure \ref{EBox}.

We will also use the following property.
\begin{lemma}\label{lemmaHam}
    The elliptic system has a conserved quantity given by
    \begin{equation}\eqlab{Hmu}
        \mathcal H(\mu) = \operatorname{Re} \left[\int_{\mu_*}^\mu \lambda(s) ds \right].
    \end{equation}
\end{lemma}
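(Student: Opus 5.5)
The plan is to differentiate $\mathcal H$ along solutions of the elliptic system \eqref{THIS} and use only holomorphy of $\lambda$ together with the chain rule.

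First, $\lambda$ is analytic on the strip $\Sigma$, which is convex and hence simply connected. The primitive
\begin{equation*}
\Lambda(\mu) := \int_{\mu_*}^{\mu}\lambda(s)\,ds
\end{equation*}
is therefore a well-defined holomorphic function on $\Sigma$, independent of the path, with $\Lambda'(\mu)=\lambda(\mu)$. This gives $\mathcal H=\real[\Lambda]$.

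Next, let $\mu(t)$ be a solution of \eqref{THIS} with real time $t$. By the chain rule for holomorphic functions composed with a (real-)differentiable curve,
\begin{equation*}
\frac{d}{dt}\Lambda(\mu(t)) = \lambda(\mu(t))\,\dot\mu(t) = -\i\,\lambda(\mu)\overline{\lambda(\mu)} = -\i\vert\lambda(\mu)\vert^2 .
\end{equation*}
This quantity is purely imaginary. Since $t$ is real, taking real parts commutes with $d/dt$, so
\begin{equation*}
\frac{d}{dt}\mathcal H(\mu(t)) = \real\left[-\i\vert\lambda(\mu(t))\vert^2\right]=0 .
\end{equation*}
Hence $\mathcal H$ is constant along every trajectory of \eqref{THIS}, which is the claim.

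As a consistency check, $\mathcal H$ is harmonic, being the real part of a holomorphic function. Its real gradient, identified with a complex number, is $\overline{\Lambda'(\mu)}=\overline{\lambda(\mu)}$. The vector field $-\i\overline{\lambda(\mu)}$ is this gradient rotated by $-\pi/2$, so it is orthogonal to the gradient and tangent to the level sets of $\mathcal H$. The elliptic system is thus the Hamiltonian flow of $\mathcal H$, up to sign convention. This matches the saddle structure found in Lemma \ref{Lemma21}: near $\mu_*$ we have $\mathcal H\approx \real[\tfrac12\lambda_0(\mu-\mu_*)^2]$, a nondegenerate saddle.

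There is no real obstacle in this argument. The only point that needs care is that $\mathcal H$ is well defined, meaning the integral does not depend on the path. This follows from simple connectivity of $\Sigma$ and analyticity of $\lambda$ there, and it also holds on any simply connected subdomain where one works.
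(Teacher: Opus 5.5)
Your proposal is correct and matches the paper's approach: the paper only says the result follows from a simple calculation (citing Krupa--Wechselberger, Lemma 4.1), and that calculation is exactly your chain-rule computation $\frac{d}{dt}\Lambda(\mu(t)) = \lambda(\mu)\dot\mu = -\i\vert\lambda(\mu)\vert^2$, whose real part vanishes. Your remark on path independence via simple connectivity of $\Sigma$ and your Hamiltonian consistency check are correct additions.
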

The result follows from a simple calculation, see also \cite[Lemma 4.1]{KRUPA20102841}.

 Next, we consider the corresponding hyperbolic system.
\begin{lemma}\label{Lemma22}
    Assume \ref{A1}. Then $\mu_*$ is a hyperbolic saddle of the hyperbolic system \eqref{THIS2}, repeated here for convenience 
    \begin{equation*}
    \dot \mu  = - \overline{\lambda(\mu)}.
\end{equation*}
    Let
   $        \mu = \mu_* + r \e^{\i\theta}.$
    Then the tangents of the stable and unstable manifolds at $\mu_*$ make the following angles $\theta$ with the positive real axis 
\begin{equation}\eqlab{HypAngle}
    \theta^{U,H} = -\frac{\arg(\lambda_0)}{2}-\frac{\pi}{2}, \quad \theta^{S,H} = - \frac{\arg(\lambda_0)}{2},
\end{equation}
respectively.
\end{lemma}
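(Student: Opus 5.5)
The plan mirrors the proof of Lemma \ref{Lemma21}. By \ref{A1}, $\lambda(\mu)=\lambda_0(\mu-\mu_*)+\mathcal O((\mu-\mu_*)^2)$ with $\lambda_0=\lambda'(\mu_*)\neq 0$. Hence $\mu_*$ is an equilibrium of \eqref{THIS2}. Writing $\tilde\mu=\mu-\mu_*$, the linearisation is
\begin{equation*}
\dot{\tilde\mu} = -\overline{\lambda_0}\,\overline{\tilde\mu}.
\end{equation*}
This map is only $\mathbb R$-linear (it is antilinear), so I will view it as a real $2\times 2$ system in $(a,b)$, where $\tilde\mu=a+\i b$. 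Writing $\lambda_0=\vert\lambda_0\vert\e^{\i\varphi}$ with $\varphi=\arg(\lambda_0)$, the map $\tilde\mu\mapsto-\overline{\lambda_0}\,\overline{\tilde\mu}$ is complex conjugation (a reflection) followed by multiplication by $-\overline{\lambda_0}$ (a rotation and scaling). Its real matrix therefore has trace $0$ and determinant $-\vert\lambda_0\vert^2$. The eigenvalues are $\pm\vert\lambda_0\vert$, which are real, nonzero and of opposite sign, so $\mu_*$ is a hyperbolic saddle. By the stable manifold theorem, the invariant manifolds of the nonlinear system are tangent at $\mu_*$ to the corresponding eigenlines.

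To obtain the angles, rather than computing eigenvectors explicitly I will substitute the ansatz $\tilde\mu=r\e^{\i\theta}$ with $r>0$ and require the vector field to be a real multiple $\kappa\tilde\mu$ of the position. This gives
\begin{equation*}
-\vert\lambda_0\vert\e^{-\i\varphi}r\e^{-\i\theta}=\kappa r\e^{\i\theta}
\quad\Longleftrightarrow\quad
\kappa=-\vert\lambda_0\vert\e^{-\i(\varphi+2\theta)}\in\mathbb R .
\end{equation*}
For the unstable direction, $\kappa=+\vert\lambda_0\vert$ requires $\e^{-\i(\varphi+2\theta)}=-1$. This means $\varphi+2\theta\equiv\pi \pmod{2\pi}$, that is, $\theta\equiv-\varphi/2+\pi/2 \pmod \pi$. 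Since a tangent line is determined only modulo $\pi$, this is the same as $\theta^{U,H}=-\varphi/2-\pi/2$.

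For the stable direction, $\kappa=-\vert\lambda_0\vert$ requires $\varphi+2\theta\equiv0 \pmod{2\pi}$, which gives $\theta^{S,H}=-\varphi/2$ modulo $\pi$. These are the angles in \eqref{HypAngle}.

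There is no real obstacle here. The only care needed is the branch and modulo-$\pi$ bookkeeping of the angles: they describe tangent lines, so $\theta$ and $\theta+\pi$ must be identified, and the representatives must be chosen to match the stated normalisation. As a consistency check against Lemma \ref{Lemma21}, the factor $-\i$ in \eqref{THIS} rotates all hyperbolic angles by $-\pi/4$. Since $\theta^{U,E}=\theta^{S,H}-\pi/4$ and $\theta^{S,E}=\theta^{U,H}-\pi/4$, the expected relation between the two lemmas is confirmed.
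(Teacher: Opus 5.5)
Your proposal is correct and follows essentially the same route as the paper, which proves this lemma exactly as Lemma~\ref{Lemma21}: linearise at $\mu_*$, view the antilinear field $\tilde\mu\mapsto-\overline{\lambda_0}\,\overline{\tilde\mu}$ as a real $2\times 2$ system with eigenvalues $\pm\vert\lambda_0\vert$, and read the angles off the eigenvectors. Your polar ansatz $\tilde\mu=r\e^{\i\theta}$ is simply a compact way of computing those eigendirections, and your modulo-$\pi$ identification reproduces the stated representatives.
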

\begin{proof}
    The proof is identical to the proof of Lemma \ref{Lemma21}. Further details are therefore left out.
\end{proof}
We refer to Figure \ref{Anglesb} for an illustration.
Comparing the angles of Lemma \ref{Lemma21} and Lemma \ref{Lemma22}, we immediately see that
\begin{equation}\nonumber
     \frac{\theta^{S,E}+\theta^{U,E}}{2} = - \frac{1}{2}\arg ( \lambda_0) - \frac{\pi}{2} = \theta^{U,H}.
 \end{equation}
 The angle $\theta^{U,H}$ will turn out to form the natural barrier for the continuation of attracting/repelling slow invariant manifolds near $\mu_*$, see Remark \ref{remthetaUH} below. In fact, the angles will be useful later on when we introduce certain polar coordinates associated with our blowup. 
 
\begin{figure}[h]
    \centering
    \begin{subfigure}[b]{0.425\textwidth}
        \centering
        \includegraphics[width=\linewidth]{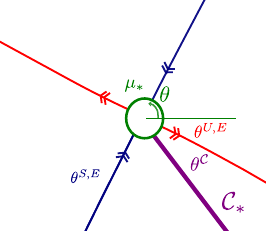}
        \caption{}
        \label{Anglesa}
    \end{subfigure}
    \hfill
    \begin{subfigure}[b]{0.485\textwidth}
        \centering
        \includegraphics[width=\linewidth]{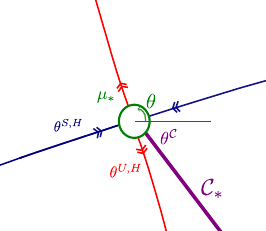}
        \caption{}
        \label{Anglesb}
    \end{subfigure}
    \caption{(a): The invariant manifolds of $\mu_*$ in the elliptic system. (b): The invariant manifolds of $\mu_*$ in the hyperbolic system. For simplicity, we only show the linear approximations, along with the angles $\theta$ from Lemma \ref{Lemma21} and \ref{Lemma22}. Notice that $\theta^C\in (\theta^{S,E},\theta^{U,E})$ (a consequence of assumption \ref{B2}, see main text for details). }
    \label{Anglesab}
\end{figure}


 

\section{Asymptotic series and blowup}\label{asympbu}
In this section, we consider formal asymptotic series for our slow manifolds solutions.
\subsection{Asymptotic series away from $\mu_*$}
We first consider any compact subset $\Sigma_0 \subset \Sigma$ such that $\mu_*,\overline{\mu_*} \notin \Sigma_0$  and introduce the (column) vector of fast variables $$\zeta = (z,w).
$$
We then write \eqref{ComplexI} in the following vector form:
\begin{equation}\eqlab{VecSys}
    \begin{cases}
     \dot{\zeta} = \operatorname{diag}(\lambda(\mu),\nu(\mu))\zeta + \epsilon H(\zeta,\mu,\epsilon),\\
     \dot{\mu} = \epsilon,
    \end{cases}
\end{equation}
with $H = (F,G)$. 
We notice that $\operatorname{diag}(\lambda(\mu),\nu(\mu))$ is invertible for all $\mu \in \Sigma_0$, cf. assumption \ref{A1} and Lemma \ref{lemmasym}.

Recall from the introduction, that we fix slow invariant manifolds  as graphs $\zeta=\xi^{a,r}(\mu,\epsilon)$ over an interval containing $\mu_s$ and $\mu_u$, respectively. The following is well known:
\begin{lemma}\label{FormS1}
    There exists a unique formal invariant manifold solution of \eqref{VecSys}:
    \begin{equation}\eqlab{FS1}
    \zeta = \sum_{n=1}^{\infty}\xi_n(\mu)\epsilon^n
\end{equation}
with $\xi_n:\Sigma_0\to \mathbb C^2$ analytic for all $n \in \N$. The functions $\xi^a$ and $\xi^r$ are each asymptotic to \eqref{FS1} on their respective domains.
\end{lemma}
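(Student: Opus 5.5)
We substitute the formal series \eqref{FS1} into the invariance equation for a graph $\zeta=\xi(\mu,\epsilon)$ of \eqref{VecSys}. That equation reads
\begin{equation*}
\epsilon\,\partial_\mu \xi(\mu,\epsilon) = \operatorname{diag}(\lambda(\mu),\nu(\mu))\,\xi(\mu,\epsilon) + \epsilon H(\xi(\mu,\epsilon),\mu,\epsilon).
\end{equation*}
We then collect equal powers of $\epsilon$, writing $\Lambda(\mu):=\operatorname{diag}(\lambda(\mu),\nu(\mu))$. At order $\epsilon^0$ the equation gives $\Lambda(\mu)\xi_0=0$, so $\xi_0=0$ on $\Sigma_0$, since $\Lambda$ is invertible there; this is why the sum starts at $n=1$. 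At order $\epsilon^1$ we obtain $\Lambda(\mu)\xi_1(\mu) = -H(0,\mu,0)$, hence $\xi_1=-\Lambda^{-1}H(0,\cdot,0)$. For $n\ge 2$ the coefficient of $\epsilon^n$ in $\epsilon H(\sum_k \xi_k\epsilon^k,\mu,\epsilon)$ is a polynomial in $\xi_1,\dots,\xi_{n-1}$ with coefficients given by Taylor coefficients of $H$ at $(0,\mu,0)$; this follows from Taylor expanding $H$ in $(\zeta,\epsilon)$, using $\xi_0=0$. The recursion is therefore
\begin{equation*}
\xi_n(\mu) = \Lambda(\mu)^{-1}\Big(\xi_{n-1}'(\mu) - P_n(\xi_1,\dots,\xi_{n-1},\mu)\Big).
\end{equation*}
Each step is uniquely solvable, and each $\xi_n$ is analytic on a neighbourhood of $\Sigma_0$ by induction. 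The ingredients are that $H$ is analytic and that $\Lambda^{-1}$ is analytic away from $\mu_*,\overline{\mu_*}$, which holds by \ref{A1} and \eqref{symFL}. Derivatives of analytic functions stay analytic, though we should work on slightly enlarged neighbourhoods so that the derivatives are defined at every step. This establishes existence and uniqueness of the formal series.

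The second claim is that $\xi^a$ and $\xi^r$ are asymptotic to \eqref{FS1}. For $\mu$ in a real neighbourhood of $\mu_s$ (respectively $\mu_u$) the critical manifold $\zeta=0$ is normally hyperbolic by \ref{A1}. By Fenichel theory, any attracting slow manifold $\xi^a$ is $C^k$ in $(\mu,\epsilon)$ up to $\epsilon=0$ for every $k$, on a slightly smaller compact interval. Its $\epsilon$-Taylor coefficients satisfy the same recursion, because they solve the invariance equation order by order, and uniqueness then forces them to equal $\xi_n$. Taylor's theorem gives
\begin{equation*}
\xi^a-\sum_{n=1}^N\xi_n\epsilon^n=\mathcal O(\epsilon^{N+1})
\end{equation*}
for each $N$, and the same argument applies to $\xi^r$.

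The main obstacle is the non-uniqueness of slow manifolds. Different choices of $\xi^a$ differ by exponentially small terms, and we need the $C^k$ smoothness in $\epsilon$ to be uniform. I would handle this by invoking the standard Fenichel result that every slow manifold is $C^k$ on compacta, with bounds depending only on $k$. An alternative is a direct estimate: write $\xi^a=\sum_{n\le N}\xi_n\epsilon^n+\epsilon^{N+1}\eta$ and show that $\eta$ stays bounded. The equation for $\eta$ is $\epsilon\eta'=\Lambda\eta+\mathcal O(1)$, which is exponentially contracting in forward time for $\mu<0$ when considered along real $\mu$. A Gronwall-type estimate along the flow then bounds $\eta$ given bounded initial data, and such data is supplied by the $\mathcal O(1)$ distance of the slow manifold within the attracting region.
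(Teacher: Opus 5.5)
Your proof is correct, but it takes a different route from the paper. The paper's proof is a single citation: invertibility of $\operatorname{diag}(\lambda,\nu)$ on $\Sigma_0$ places \eqref{VecSys} in the setting of Proposition 2.1 of \cite{de2020a}. That result supplies the unique formal slow manifold, with Gevrey-type control of its coefficients, and also the fact that slow manifolds are asymptotic to it.

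You instead derive the recursion $\xi_n=\Lambda^{-1}(\xi_{n-1}'-P_n)$ explicitly and prove the asymptotic property by hand. Your second argument (truncate, write the remainder $\eta$, estimate it through the linear part) is exactly the technique the paper itself uses in the proof of Proposition \ref{Extension1}. Your route is self-contained and yields the plain Poincar\'e asymptotics that are actually used later; only $\mathcal O(\epsilon^N)$ remainders for arbitrary $N$ are ever needed. The citation is shorter and gives more than the lemma requires.

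Two points in your asymptotic part should be stated more carefully.

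\emph{Smoothness of a fixed slow manifold.} A fixed slow manifold need not itself be $C^k$ in $\epsilon$. Non-uniqueness allows exponentially small differences whose $\epsilon$-dependence is uncontrolled. What holds is that it is exponentially close to a $C^k$ Fenichel manifold on slightly smaller compacta, and that suffices for the expansion.

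\emph{Initial data in the direct estimate.} The initial data for $\eta$ is not bounded. An $\mathcal O(\epsilon)$ (or $\mathcal O(1)$) initial distance becomes $\mathcal O(\epsilon^{-N})$ after division by $\epsilon^{N+1}$. Boundedness instead comes from the contraction factor $\exp\big(\epsilon^{-1}\int_{\mu_0}^{\mu}\alpha(s)\,ds\big)\le \e^{-c\delta/\epsilon}$, which beats any power of $\epsilon$ once $\mu-\mu_0\ge\delta$. The forcing contributes only $\mathcal O(1)$. Hence the asymptotics hold on the domain shrunk slightly at the entry end: the left end for $\xi^a$, and the right end for $\xi^r$ using backward time. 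This is the natural reading of ``on their respective domains'', and it is how the paper uses the lemma.
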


\begin{proof}
    Since the diagonal matrix in \eqref{VecSys} is invertible for all $\mu \in \Sigma_0$, the result follows from Proposition 2.1 of \cite{de2020a}.
\end{proof}
\subsection{Blowup}
The asymptotic series \eqref{FS1} is not well-defined near $\mu_*$. To study the system near the degenerate point $\mu_*$, we consider the following blowup transformation

\begin{equation}\eqlab{BlowUp}
    \begin{cases}
         z  =r \breve{z},\\
        w = r \breve{w},\\
        \mu = \mu_*+r\breve{\mu},\\
        \epsilon =r^2\breve{\epsilon},
    \end{cases}
\end{equation}
with $r \geq 0, (\breve{z},\breve{w},\breve{\mu},\breve{\epsilon}) \in \S^3\subset \mathbb C^4$,  and where $\S^3$ denotes the complex 3-sphere. We will consider the following charts
\begin{align}
    \text{$\vert\breve{\mu}\vert=1$-chart}:&
    \begin{cases}
         z  =r_1 z_1,\\
        w = r_1 w_1,\\
        \mu = \mu_*+r_1\e^{i\theta},\\
        \epsilon =r_1^2\epsilon_1,
    \end{cases}\eqlab{chart2}
    \\
    \text{$\breve \mu=1$-chart:}&
    \begin{cases}
         z  =r_2 z_2,\\
        w = r_2 w_2,\\
        \mu = \mu_*+r_2,\\
        \epsilon =r_2^2\epsilon_2,
    \end{cases}\eqlab{chart1}
\end{align}
that we (as indicated) refer to as the $\vert\breve{\mu}\vert=1$-chart and $\breve \mu=1$-chart, respectively.

In \eqref{chart1}, the chart-specific coordinates $(z_2,w_2,r_2,\epsilon_2)$ are complex variables. In contrast, we consider the chart-specific coordinates $(\theta,r_1,\epsilon_1)$ of the $\vert\breve{\mu}\vert=1$-chart, see \eqref{chart2}, to be real. Notice that $(r_1,\theta)$ are polar coordinates based at $\mu_*$ in the $\mu$-plane.


The change of coordinates between the two charts is given by
\begin{align}\eqlab{chanco}
       \begin{cases}
        z_1 = \e^{i\theta}z_2,\\
        w_1 = \e^{i\theta}w_2,\\
        r_1 = r_2\e^{-i\theta},\\
        \epsilon_1 = \epsilon_2\e^{2i\theta}.
       \end{cases}
    \end{align}
    This is obtained by equating  \eqref{chart1} with \eqref{chart2}. 

In the following, we first look for asymptotic series in the $\breve \mu = 1$-chart. The coordinates $(z_2,w_2,r_2,\epsilon_2)$ are ideally suited for this purpose (\cite[Section 4]{kristiansenzerophopf2} uses similar coordinates for the same objective). Subsequently, we translate the asymptotic series to the $ \vert \breve \mu \vert = 1$-chart by the change of coordinates \eqref{chanco}. These series will be fundamental in Section 4, when we use the $(z_1,w_1,\theta,r_1,\epsilon)$-coordinates of the $\vert\breve{\mu}\vert=1$-chart to extend the slow invariant manifolds defined by $\xi^{a,r}$ up close to $\mu_*$. Here we will also use a desingularization corresponding to division of the right hand side by (the real quantity) $r_1$.
\subsection{Asymptotic series in $\breve \mu = 1$-chart}
Consider the coordinates of the $\breve \mu = 1$-chart \eqref{chart1}. Performing the change of coordinates in \eqref{ComplexI}, we find the system
\begin{equation}\eqlab{BlowupChart1}
    \begin{aligned}
        & z_2' = \lambda(\mu_*+r_2) z_2 + r_2\epsilon_2 F(r_2z_2,r_2w_2,\mu_*+r_2,r_2^2\epsilon_2) - \epsilon_2 r_2 z_2, \\
        & w_2' = \nu(\mu_*+r_2) w_2 + r_2\epsilon_2 G(r_2z_2,r_2w_2,\mu_*+r_2,r_2^2\epsilon_2) - \epsilon_2 r_2 w_2, \\
        & r_2' = r_2^2 \epsilon_2,\\
        & \epsilon_2' = -2 r_2 \epsilon_2^2.
    \end{aligned}
\end{equation}
We look for invariant manifold solutions of \eqref{BlowupChart1} as graphs over $(r_2,\epsilon_2)$
\begin{equation}\eqlab{SlowManifoldRE1}
    (z_2,w_2) = (z_2,w_2)(r_2,\epsilon_2).
\end{equation}
Inserting \eqref{SlowManifoldRE1} into \eqref{BlowupChart1}, we obtain the following invariance equations 
\begin{equation}\eqlab{PDEs}
    \begin{aligned}
    &(\partial_{r_2} z_2)r_2\epsilon_2-2(\partial_{\epsilon_2} z_2)\epsilon_2^2=(\hat{\lambda}(r_2)-\epsilon_2)z_2+\epsilon_2F(r_2z_2,r_2w_2,\mu_*+r_2,r_2^2\epsilon_2),\\
    &(\partial_{r_2} w_2)r_2^2\epsilon_2-2(\partial_{\epsilon_2} w_2)r_2\epsilon_2^2=({\nu}(r_2+\mu_*)-r_2\epsilon_2)w_2+r_2\epsilon_2G(r_2z_2,r_2w_2,\mu_*+r_2,r_2^2\epsilon_2).
\end{aligned}
\end{equation}

The first equation has been desingularized through division by $r_2$. This is made possible by the fact that $\mu_*$ is a root of $\lambda$. In particular, we have introduced the function: 
\begin{equation}\eqlab{hatlambda}
    \hat{\lambda}(r_2) = \frac{\lambda(\mu_*+r_2)}{r_2},
\end{equation}
having an analytic extension to $r_2=0$ with $$\hat{\lambda}(0)=\lambda_0\neq0,$$ recall assumption \ref{A1}.  Note that, this desingularization is not possible in the equation for $w_2$ as
\begin{equation}\eqlab{nu0}
    \nu(\mu_*) \neq 0,
\end{equation}
cf. \eqref{symFL} and assumption \ref{A1}.
We will look for two types of formal invariant manifold solutions of the PDE-system \eqref{PDEs}:
\begin{align}
    & \text{type I:} \quad (z_2,w_2) = \sum_{n=1}^\infty \xi_{n2}(r_2)\epsilon_2^n,\eqlab{typeIser}\\
    & \text{type II:} \quad (z_2,w_2) = \sum_{n=0}^\infty m_{n2}(\epsilon_2)r_2^n.\eqlab{typeIIser}
\end{align}
In other words, type I series are formal series in $\epsilon_2$ with $r_2$-dependent coefficients, whereas for type II it is the other way around, i.e. formal series in $r_2$ with $\epsilon_2$-dependent coefficients.

Let $B_\delta\subset \mathbb C$ denote the open disc centered at the origin of radius $\delta>0$.
\begin{lemma}\label{T1}
    There exists a unique formal series of type I. In particular, there is a $\delta>0$ small enough such that $ \xi_{n2}:B_\delta \mapsto \C^2$ are analytic functions for all $n \in \N$. 
\end{lemma}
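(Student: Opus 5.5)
We substitute the type I ansatz \eqref{typeIser} directly into the invariance equations \eqref{PDEs} and equate coefficients of $\epsilon_2^n$. Write $\xi_{n2}=(z_{n2},w_{n2})$, set $z_{02}=w_{02}=0$, and denote by $[\,\cdot\,]_k$ the coefficient of $\epsilon_2^k$. We first fix $\delta>0$ so small that three things hold on $\overline{B_\delta}$: $\hat\lambda$ from \eqref{hatlambda} is analytic and nonvanishing (possible since $\hat\lambda(0)=\lambda_0\neq0$ by \ref{A1}); $\nu(\mu_*+r_2)\neq0$ (possible by \eqref{nu0}); and $F,G$ are analytic for $\mu\in\mu_*+B_\delta$ and $(z,w,\epsilon)$ near the origin.

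At order $\epsilon_2^0$, both equations give $\hat\lambda z_{02}=0$ and $\nu w_{02}=0$. This is consistent with the series starting at $n=1$ and shows it is forced.

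For $n\ge1$, the left-hand sides of \eqref{PDEs} have $\epsilon_2^n$-coefficients
\[
r_2z_{n-1,2}'-2(n-1)z_{n-1,2}
\quad\text{and}\quad
r_2^2w_{n-1,2}'-2(n-1)r_2w_{n-1,2},
\]
respectively. This gives the recursion
\begin{align*}
\hat\lambda(r_2)\,z_{n2}&=r_2z_{n-1,2}'-(2n-3)z_{n-1,2}-\big[F(r_2z_2,r_2w_2,\mu_*+r_2,r_2^2\epsilon_2)\big]_{n-1},\\
\nu(\mu_*+r_2)\,w_{n2}&=r_2^2w_{n-1,2}'-(2n-3)r_2w_{n-1,2}-r_2\big[G(r_2z_2,r_2w_2,\mu_*+r_2,r_2^2\epsilon_2)\big]_{n-1}.
\end{align*}
The key observation is that the system is triangular. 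Since $z_2,w_2=\mathcal O(\epsilon_2)$, Taylor expanding $F$ and $G$ in $(z,w,\epsilon)$ shows that the $\epsilon_2^{n-1}$-coefficients of $F(\cdots)$ and $G(\cdots)$ are polynomials in $z_{k2},w_{k2}$ with $k\le n-1$ only. Their coefficients are analytic functions of $r_2\in B_\delta$, namely derivatives of $F,G$ at $(0,0,\mu_*+r_2,0)$ multiplied by powers of $r_2$.

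Hence, by induction on $n$, the right-hand sides are analytic on $B_\delta$, being built from previously determined analytic functions and their derivatives. Dividing by the nonvanishing analytic functions $\hat\lambda$ and $\nu(\mu_*+\cdot)$ then yields analytic $z_{n2},w_{n2}$ on the same disc $B_\delta$. For instance,
\[
z_{12}=-\frac{F(0,0,\mu_*+r_2,0)}{\hat\lambda(r_2)},
\qquad
w_{12}=-\frac{r_2\,G(0,0,\mu_*+r_2,0)}{\nu(\mu_*+r_2)}.
\]
Uniqueness follows because at every order the coefficients are forced by the recursion; no free constants or integration appear, since the equations at each order are algebraic in $\xi_{n2}$.

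The only point requiring care is that the disc does not shrink with $n$. This holds because the recursion involves only differentiation, multiplication and division by fixed functions that do not vanish on $\overline{B_\delta}$, so each step preserves analyticity on $B_\delta$. The crucial structural input is the desingularization of the $z_2$-equation by $r_2$, which is possible since $\lambda(\mu_*)=0$ with $\lambda_0\neq0$. Without it, the $z$-equation would have the factor $\lambda(\mu_*+r_2)$, which vanishes at $r_2=0$, and the divisions would fail there. For the $w_2$-equation no such division is needed, because of \eqref{nu0}.
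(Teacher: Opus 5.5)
Your argument is correct and is essentially the paper's proof. The paper writes the invariance equations in the vector form \eqref{inveqn2}, notes that $A(r_2)=\operatorname{diag}(\hat\lambda(r_2),\nu(\mu_*+r_2))$ is invertible near $r_2=0$, and appeals to Proposition 2.1 of \cite{de2020a}, whose proof is the same order-by-order algebraic recursion you carry out explicitly. Your closing aside is overstated, though: without the desingularization every term on the right of the $z_2$-recursion carries a factor $r_2$, so dividing by $\lambda(\mu_*+r_2)=r_2\hat\lambda(r_2)$ would still work, and the desingularization only makes this cancellation visible.
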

\begin{proof}
    We write $\zeta_2=(z_2,w_2)$. Then the PDE-system takes the following vector form:
    \begin{equation}\eqlab{inveqn2}
    \begin{aligned}
        \epsilon_2 
        \begin{pmatrix}
            r_2&0\\
            0&r_2^2
        \end{pmatrix}
        (\partial_{r_2} \zeta_2)=& A(r_2)\zeta_2(r_2,\epsilon_2)\\
        &+\epsilon_2 \begin{pmatrix}
            1&0\\
            0&r_2
        \end{pmatrix} \left( H(r_2\zeta_2,\mu_*+r_2,r_2^2\epsilon_2) +
        2\epsilon_2(\partial_{\epsilon_2}\zeta_2)-\zeta_2\right),
    \end{aligned}
    \end{equation}
    with
    \begin{align*}
        & A(r_2) := 
        \operatorname{diag}(\hat{\lambda}(r_2),\nu(\mu_*+r_2)).
    \end{align*}
    The point is now that $A(r_2)$ is regular. We can therefore apply Proposition 2.1 in \cite{de2020a} to the system \eqref{PDEs} (the proof proceeds completely analogously). This leads to the existence and uniqueness of formal series of type I.
\end{proof}

Next for type II series, we let
    \begin{equation}\eqlab{Sector}
        S(\psi,\chi,\delta) =\left\{x \in \C\,:\, 0<\vert x\vert<\delta, \vert \arg(x)-\psi \vert < \frac{\pi+\chi}{2} \right\},
    \end{equation}
    denote the sector of opening $\pi + \chi$, centered along the direction $\psi$, and with radius $\delta$. (For details on summability, we refer to \cite{bonckaert2008a,de2020a}.)
\begin{lemma}\label{T2}
    Define 
    \begin{equation}
        \psi^S = - 2\theta^{S,E}, \quad \psi^U = - 2\theta^{U,E}.\eqlab{psiSU}
    \end{equation}
    Suppose that $\psi \in \{\psi^{S},\psi^{U}\}$ and fix any $\chi \in (0,\pi)$.
    Then for $\delta>0$ small enough, there exists a formal series of \eqref{inveqn2} of type II: $m_{n2}=m^{\psi}_{n2}$, $n\in \mathbb N_0$, with
    \begin{equation*}
        m^{\psi}_{n2}:S(\psi,\chi,\delta) \mapsto \C^2,
    \end{equation*}
   being analytic functions for all $n \in \N_0$. In further details, each $m^{\psi}_{n2}$, $n\in \mathbb N_0$, is the 1-sum of a Gevrey-1 series in the direction $\psi$.
\end{lemma}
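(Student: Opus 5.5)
Our plan is to insert the type II ansatz $\zeta_2=\sum_{n\ge 0} m_{n2}(\epsilon_2) r_2^n$ into \eqref{inveqn2}, expand in powers of $r_2$, and solve order by order. At order $r_2^0$ the $w_2$-equation reads $0=\nu(\mu_*)w_{02}$, so $w_{02}\equiv 0$ by \eqref{nu0}. The $z_2$-equation then becomes the singular linear ODE
\begin{equation*}
-2\epsilon_2^2 z_{02}'(\epsilon_2) = (\lambda_0-\epsilon_2)z_{02} + \epsilon_2 F(0,0,\mu_*,0).
\end{equation*}
At each higher order $n\ge1$, the $w$-component is obtained algebraically, because $\nu(\mu_*)\neq0$ and the right hand side involves only previously determined coefficients and their $\epsilon_2$-derivatives. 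The $z$-component solves an inhomogeneous ODE of the same type,
\begin{equation*}
-2\epsilon_2^2 z_{n2}' + (n\epsilon_2-\lambda_0+\epsilon_2)z_{n2}=\text{(known terms)},
\end{equation*}
which is again irregular singular at $\epsilon_2=0$ with leading coefficient $\lambda_0\neq 0$.

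Each such ODE has a unique formal power series solution in $\epsilon_2$, found recursively since $\lambda_0\neq0$. Its coefficients grow like $(2/\lambda_0)^k k!$, so the series is Gevrey-1. Borel summability follows from the standard theory of rank-one irregular singular points (cf.\ \cite{bonckaert2008a,de2020a}). The associated homogeneous solution behaves like $\exp(-\lambda_0/(2\epsilon_2))$ up to a power. The singular (anti-Stokes) direction is therefore where $\lambda_0/\epsilon_2$ is negative real, i.e.\ $\arg\epsilon_2=\arg\lambda_0+\pi$. The 1-sum exists along every direction except this one. We realize it as the unique bounded solution on a sector of opening $\pi+\chi$, $\chi<\pi$, whose bisector avoids $\arg\lambda_0+\pi$. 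Concretely, we take the Laplace integral, or equivalently integrate the variation-of-constants formula from $\epsilon_2=0$ along rays inside the sector.

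It remains to check that $\psi^{S}$ and $\psi^U$ are admissible. By \eqref{thetasu} and \eqref{psiSU}, $\psi^S=\arg\lambda_0+3\pi/2$ and $\psi^U=\arg\lambda_0+\pi/2$. These sit at angle exactly $\pi/2$ from the bad direction $\arg\lambda_0+\pi$. A sector of half-opening $(\pi+\chi)/2<\pi$ around either direction therefore does not contain it, and the exponential factor $\exp(-\lambda_0/(2\epsilon_2))$ stays bounded or decays on approach to $0$ within that sector. This is consistent with the elliptic invariant manifold directions found in Lemma~\ref{Lemma21}: via \eqref{chanco}, $\epsilon_1$ real corresponds to $\arg\epsilon_2=-2\theta$.

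The main obstacle is making the induction uniform. At order $n$ the inhomogeneity involves derivatives $m_{k2}'$ for $k<n$, together with the nonlinear terms of $H(r_2\zeta_2,\dots)$. We must show that these are again Gevrey-1 and 1-summable in the same direction $\psi$ on a common sector $S(\psi,\chi,\delta)$. For this we use that 1-summable functions in a fixed direction form a differential algebra, closed under composition with analytic maps. Each step then preserves summability, and shrinking $\delta$ (but not $\chi$) at each order is allowed, since the statement only concerns each fixed $n$. Analyticity of each $m^{\psi}_{n2}$ on the sector then follows from the sum construction.
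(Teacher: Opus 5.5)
Your skeleton matches the paper's. You expand in $r_2$, get $w_{02}\equiv 0$ and each $w_{n2}$ algebraically from $\nu(\mu_*)\neq 0$, solve a rank-one irregular ODE for each $z_{n2}$ by Borel--Laplace, and note that $\psi^S,\psi^U$ sit at angle exactly $\pi/2$ from the singular direction $\arg(-\lambda_0)$. The paper does the base case explicitly, with $Z_0(s)=-(2s+\lambda_0)^{-1/2}F_0/\sqrt{\lambda_0}$, and the induction via convolution products; you cite the algebra property of 1-summable functions instead, which is a cosmetic difference. The gap is in how you turn the $\pi/2$ separation into analyticity on $S(\psi,\chi,\delta)$, which is the real content of the lemma.

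\textbf{The sign and the sector geometry.} The homogeneous solution is $\sqrt{\epsilon_2}\,\e^{+\lambda_0/(2\epsilon_2)}$, not $\e^{-\lambda_0/(2\epsilon_2)}$; compare Lemma~\ref{SN1}. Since $\chi>0$, the half-opening $(\pi+\chi)/2$ exceeds $\pi/2$. So $S(\psi,\chi,\delta)$ \emph{does} contain the ray $\arg\epsilon_2=\arg(-\lambda_0)$. It has to, because Lemma~\ref{SN1} is used on the overlap $S(\psi^S,\chi,\delta)\cap S(\psi^U,\chi,\delta)$ around that ray, where the two sums differ by the recessive $\e^{\lambda_0/(2\epsilon_2)}$. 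No sector of opening $>\pi$ keeps $\e^{\pm\lambda_0/(2\epsilon_2)}$ bounded, so your claim that the exponential stays bounded is false. Your ``unique bounded solution'' characterization is still true, but for the opposite reason: the sector contains directions where $\e^{\lambda_0/(2\epsilon_2)}$ blows up.

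\textbf{The alternative realization.} Integrating variation of constants from $\epsilon_2=0$ along rays only works where $\real(\lambda_0/\epsilon_2)>0$. That half-plane covers only part of $S(\psi,\chi,\delta)$, so on the rest you still need a continuation argument.

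\textbf{The correct argument.} It lives in the Borel plane. All singularities of $Z_n$ lie on the ray $\arg s=\arg(-\lambda_0)$, at the points $-k\lambda_0/2$ produced by the factor $2s+\lambda_0$ and by convolutions. Hence $Z_n$ is analytic with exponential bounds on $\{|\arg s-\psi|\le \chi/2\}$ for every $\chi<\pi$. Rotating the Laplace ray through this sector continues $\mathcal L[Z_n]$ analytically to $|\arg\epsilon_2-\psi|<(\pi+\chi)/2$; this is the paper's appeal to \cite[Proposition 3]{bonckaert2008a}.

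This also shows that ``the bisector avoids the bad direction'' is not enough. You need angular distance $>\chi/2$ from it, and the $\pi/2$ separation supplies exactly that for every $\chi\in(0,\pi)$. Correspondingly, your induction must establish summability in every direction of that arc, not only in the single direction $\psi$.
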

\begin{proof}
We consider \eqref{inveqn2} with subscripts dropped and look for formal series solutions of the form
    \begin{align}\nonumber
        &\zeta =\sum_{n=0}^\infty m_n(\epsilon)r^n,\quad m_n = (z_n,w_n). 
    \end{align}
          Then $W(rz,rw,\mu_*+r,r^2\epsilon)$, $W\in \{F,G\}$, are (by composition) formal series in $r$ with $\epsilon$-dependent coefficients. 
          By the Faa di Bruno formula (\cite[Theorem 2.1]{constantine1996a}), we have:
    \begin{align*}
        W(rz,rw,\mu_*+r,r^2\epsilon) &= W(0,0,\mu_*,0)+\sum_{n=1}^\infty W_n(\epsilon,m_0,...,m_{n-1})r^n,
        \end{align*}
    where $W_n$ is degree $(n-1)$ polynomial with respect to $(m_1,...m_{n-1})$ for $\epsilon,m_0$ fixed. We also write 
     \begin{align}
        \begin{aligned}
           \hat \lambda(r) &= \sum_{n=0}^\infty \lambda_{n}r^{n},\quad 
        \nu(\mu_*+r) = \sum_{n=0}^\infty \nu_n r^n,
        \end{aligned}
    \end{align}
    which are convergent series. Notice that $\nu_0\ne 0$ (cf. \eqref{nu0}).
    By inserting the type II formal series into the PDE-system \eqref{PDEs} and collecting terms in $\epsilon$, we find
    \begin{equation}
       \nonumber 2\epsilon^2z_0'(\epsilon) =-(\lambda_0-\epsilon)z_0(\epsilon)-\epsilon F_0, \quad w_0(\epsilon) \equiv 0,
    \end{equation}
    and
    \begin{align}
        2 \epsilon^2 z'_n(\epsilon) &=-(\lambda_0-(n+1)\epsilon)z_n(\epsilon)-\epsilon F_n(\epsilon,m_0,...,m_{n-1})-\sum_{i=1}^{n}\lambda_{i}z_{n-i}(\epsilon), \nonumber \\
        \nu_0 w_n(\epsilon) & =-2w'_{n-1}\epsilon^2+n\epsilon  w_{n-1}(\epsilon)-\epsilon G_{n-1}(\epsilon,m_0,...,m_{n-2})-\sum_{i=1}^{n}\nu_i w_{n-i}(\epsilon), \nonumber
    \end{align}
    for any $n \in \N$ with $W_0(\epsilon,m_0,...,m_{n-1}) = W_0$, $W\in \{F,G\}$. Since $\nu_0 \neq 0$, we can solve the linear equation for $w_n(\epsilon)$ recursively (starting from $w_0(\epsilon)\equiv 0$). In this way, $w_n(\epsilon)$ becomes a function of $(\epsilon,z_0,...,z_{n-2})$ for all $n \in \N$, resulting in the following equations for $z_n(\epsilon)$:
    \begin{equation}\eqlab{z0}
        2\epsilon^2z_0'(\epsilon) = - (\lambda_0-\epsilon)z_0(\epsilon)-\epsilon F_0,
    \end{equation}
    and 
    \begin{equation}\eqlab{nequation}
        2\epsilon^2z_n'(\epsilon) = - (\lambda_0-(n+1)\epsilon)z_n(\epsilon)-\epsilon F_n(\epsilon,z_0,...,z_{n-1})-\sum_{i=1}^n\lambda_{i}z_{n-i}(\epsilon),
    \end{equation}
    with $F_n$, $n\in \mathbb N$, being an analytic function of $\epsilon,z_1,...,z_{n-1}$ for each $n\in \mathbb N$. These equations for $z_n$ can be solved by induction on $n$ using the Borel-Laplace approach in \cite[Section 6]{bonckaert2008a}, see also \cite[Appendix A]{kriszm1}. The base  case $n=0$ of the induction step proceeds as follows: We write
    \begin{equation}\nonumber
        z_0 = \mathcal{L}[Z_0], 
    \end{equation}
    where $\mathcal{L}$ is the Laplace transform:
    \begin{equation}\nonumber
        \mathcal{L}[Z_0](\epsilon) = \int_{0}^{\infty \e^{\i\psi}} Z_0(s) \e^{-s\epsilon^{-1}} ds.
    \end{equation}
    By the application of the inverse Laplace (Borel transform, see \cite{bonckaert2008a}), this brings \eqref{z0} into the form
    \begin{equation}\eqlab{LZ}
        2sZ_0(s) = - \lambda_0 Z_0(s) + (1 \star Z_0)(s) - F_0,
    \end{equation}
    where $\star$ denotes the convolution. Here we have used that
    \begin{equation}\nonumber
        \epsilon^2 \frac{d}{d \epsilon}\mathcal{L}[Z_0](\epsilon) = \mathcal{L}[s.Z_0](\epsilon),
    \end{equation}
    where $s.Z_0$ denotes the function $s \mapsto sZ_0(s)$. The solution of \eqref{LZ} is given by
    \begin{equation}\nonumber
        Z_0(s) = - (2s + \lambda_0)^{-\frac{1}{2}}\frac{F_0}{\sqrt{\lambda_0}}.
    \end{equation}
    To obtain this, we have differentiated \eqref{LZ} with respect to $s$ using $(1\star Z_0)'(s) = Z_0(s)$, and solved the resulting differential equation with the initial condition $Z_0(0)=-F_0/\lambda_0$.
    We now fix a branch cut along the ray emanating from the point $s = - \frac{\lambda_0}{2}$ and direction given by $\arg(s) = \arg(-\lambda_0)$. In this way, we obtain our solution $z_0(\epsilon)$ by Laplace transforming in either direction $\psi \in \{ -2\theta^{S,E},-2\theta^{U,E}\}$:
    \begin{equation}\eqlab{z0lo}
        z_0(\epsilon) = \frac{-F_0}{\sqrt{\lambda_0}}\int_0^{\infty \e^{\i\psi}} (2s+\lambda_0)^{-\frac{1}{2}}\e^{-s \epsilon^{-1}}ds.
    \end{equation}
    Notice that, the angles $-2\theta^{S,E}$ and $-2\theta^{U,E}$ are bounded away from the singular direction $\arg(s) = \arg(-\lambda_0)$ by \eqref{thetasu} (they are rotated by $90^\circ$ clockwise and anti-clockwise, respectively), see Figure \ref{Laplacefig}.
    By analytic continuation,  \eqref{z0lo} defines an analytic function of $\epsilon$ on $S(\psi,\chi,\delta)$ with $\chi \in (0, \pi)$, see \cite[Proposition 3]{bonckaert2008a}. For the induction step, we suppose that $$z_i = \mathcal L[Z_i]:S(\varphi,\chi,\delta)\to \mathbb C, \quad i \in \{0,\ldots,n-1\},$$ and Borel transform \eqref{nequation}. This gives
    \begin{equation}\nonumber
        2sZ_n(s) =  - \lambda_0 Z_n(s) + (n+1)(1 \star Z_n)(s)-F_n^{\star}(s)-\sum_{i=1}^n\lambda_i Z_{n-i}(s),
    \end{equation}
    where $F_n^\star$ can be expressed as the convolution product of the known functions $Z_0,\ldots,Z_{n-1}$ using the analyticity of $F$, see \cite[Proposition 5]{bonckaert2008a}. This equation can solved in a similar way to \eqref{LZ} and upon Laplace transforming the resulting $Z_n$, we obtain the desired analytic $z_n:S(\psi,\chi,\delta)\to \mathbb C$.
\end{proof}
\begin{figure}[h]
    \centering
    \begin{subfigure}[b]{0.49\textwidth}
        \centering
        \includegraphics[width=\linewidth]{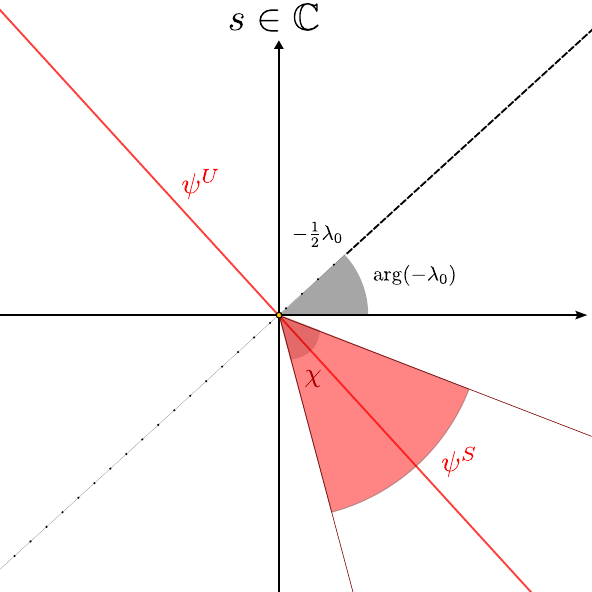}
       \caption{} 
        \label{Sektor1}
    \end{subfigure}
    \hfill
    \begin{subfigure}[b]{0.49\textwidth}
        \centering
        \includegraphics[width=\linewidth]{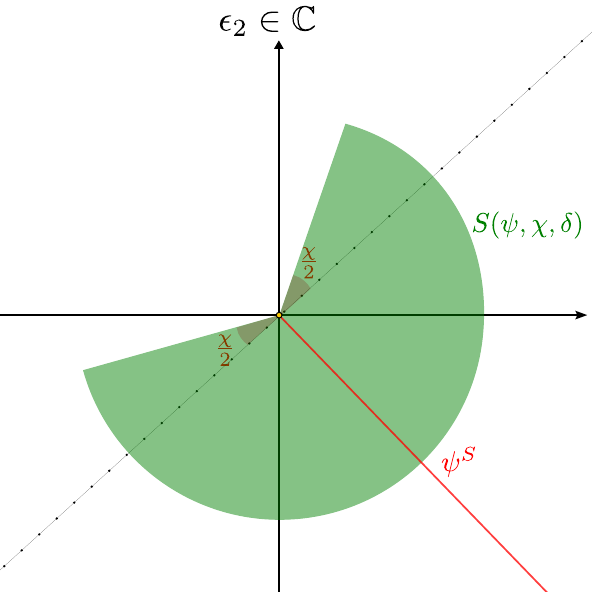}
        \caption{} 
        \nonumber
    \end{subfigure}
    \caption{Illustration of the domains used in the solution of \eqref{LZ} in the proof of Lemma \ref{T2}.} 
    \label{Laplacefig}
\end{figure}
\begin{remark}\label{remthetaUH}
    The singular direction $\psi$, associated with the Laplace transform, is illustrated in Figure \ref{Sektor1} as the ray making an angle $\arg(-\lambda_0)$ with the positive real axis. This angle corresponds to:
    \begin{equation}
        \psi = - 2\theta^{U,H},
    \end{equation} 
    recall Lemma \ref{Lemma22}.
\end{remark}
Consequently, we have proven the existence of type I and II formal series. The two series coincide as formal series in $(r_2,\epsilon_2)$:

\begin{lemma}\label{Doublesseriescof}
Consider the formal series in Lemma \ref{T1} and \ref{T2} and expand the coefficients $\xi_{n2}$ and $m_{n2}$  for any $n\in \mathbb N_0$ as formal series of $r_2$ and $\epsilon_2$, respectively. In this way, we obtain two identical formal series in $(r_2,\epsilon_2)$.
\end{lemma}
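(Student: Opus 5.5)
The plan is to show that both expansions produce formal double series $\sum_{n,k} c_{nk} r_2^k\epsilon_2^n$ that satisfy the same invariance equations \eqref{PDEs}, written in vector form as \eqref{inveqn2}. I would then prove that such a double series solution is unique. For the type I series this is immediate: each $\xi_{n2}$ is analytic on $B_\delta$, so it has a Taylor expansion in $r_2$. For the type II series, Lemma \ref{T2} says each $m^{\psi}_{n2}$ is the 1-sum of a Gevrey-1 series in $\epsilon_2$. We therefore take the underlying formal Gevrey series, i.e. the asymptotic expansion of $m^{\psi}_{n2}$ as $\epsilon_2\to 0$ in the sector $S(\psi,\chi,\delta)$. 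This expansion does not depend on $\psi\in\{\psi^S,\psi^U\}$, because the recursion \eqref{z0}--\eqref{nequation} determines the formal coefficients uniquely.

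Next I would check that both double series solve \eqref{inveqn2} as formal series in $(r_2,\epsilon_2)$. For type I this holds by construction, since the equation holds coefficientwise in $\epsilon_2$ with analytic coefficients and we simply expand in $r_2$. For type II, each $m_{n2}$ satisfies its recursion (\eqref{z0}, \eqref{nequation} and the $w_n$ relation) as an analytic function on a sector. Taking asymptotic expansions in $\epsilon_2$ commutes with multiplication, with the operator $\epsilon_2^2\partial_{\epsilon_2}$, and with composition into the analytic $F,G$ in the Faà di Bruno form used in the proof of Lemma \ref{T2}. Hence the formal expansions satisfy the same recursion, and summing over $n$ shows the double series solves \eqref{PDEs} formally.

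It remains to prove that a formal double series solution of \eqref{PDEs} is unique, given the normalisation that type I has no $\epsilon_2^0$ term and type II has $w_0\equiv 0$. I would collect the coefficient of $r_2^k\epsilon_2^n$, ordering by $n$ and then by $k$. In the $z$-equation the left-hand side $\epsilon_2(r_2\partial_{r_2}-2\epsilon_2\partial_{\epsilon_2})z_2$ only contributes at order $\epsilon_2^{n}$ through coefficients $c_{n-1,\cdot}$ of lower $\epsilon_2$-order. The right-hand side contains $\lambda_0 c^{z}_{nk}$ plus terms built from lower-order coefficients, namely $c_{n,j}$ with $j<k$ (via $\lambda_i$, $i\ge1$) and $c_{n-1,\cdot}$ (via the $\epsilon_2 F$ and $-\epsilon_2 z_2$ terms). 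Since $\lambda_0\neq0$, $c^z_{nk}$ is determined. The same argument works for $w$ with $\nu_0=\nu(\mu_*)\ne0$, because every left-hand term carries a factor $\epsilon_2 r_2$. At $n=0$ the equations read $\hat\lambda(r_2)z=0$ and $\nu w=0$, which forces $c_{0k}=0$ for both components. So both series vanish at $\epsilon_2$-order zero, consistent with type I; for type II this matches the fact that $z_0(\epsilon_2)=\mathcal O(\epsilon_2)$ and $w_0\equiv0$. Induction then gives that the two coefficient arrays are equal.

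The main obstacle is the second step, the type II side. One has to justify that the nonlinear composition $F(r_2z_2,r_2w_2,\ldots)$ and the Borel–Laplace construction really are compatible with asymptotic expansion, so that the formal expansion of $m_{n2}$ satisfies the formal recursion. I would handle this by working directly with the formal Gevrey-1 series (the Borel transforms $Z_n$ have Taylor coefficients at $s=0$) and noting that Laplace summation is an algebra homomorphism from Gevrey-1 series onto sums (\cite{bonckaert2008a}). With that, the uniqueness induction above finishes the proof.
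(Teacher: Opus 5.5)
Your proposal is correct and follows the paper's argument. The paper's proof is the one-line appeal to uniqueness of the formal series, and you make it precise: both constructions give formal double series in $(r_2,\epsilon_2)$ that solve \eqref{PDEs}, and the coefficientwise recursion (using $\lambda_0\neq 0$ and $\nu(\mu_*)\neq 0$) determines such a series uniquely, forcing the $\epsilon_2^0$-terms to vanish. One small imprecision: Laplace summation is an algebra homomorphism on series that are 1-summable in the direction $\psi$, not on all Gevrey-1 series, but that is exactly the class Lemma \ref{T2} provides, so the argument goes through.
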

\begin{proof}
    Follows from uniqueness of the formal series. 
\end{proof}

As a final result of this section, we give the explicit leading order of the difference of the asymptotic series in Lemma \ref{T2}. 
\begin{figure}[h]
    \centering
    \includegraphics[width=0.55\linewidth]{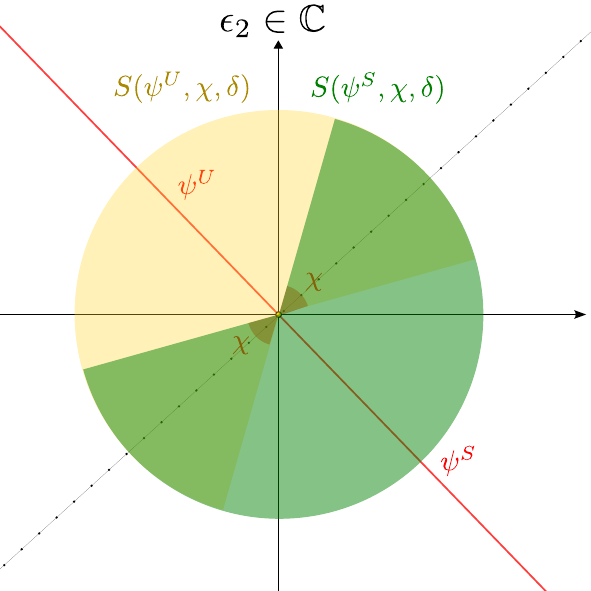}
    \caption{Illustration of the sectors $S(\psi^S,\chi,\delta)$ and $S(\psi^U,\chi,\delta)$ in the complex $\epsilon_2$-plane.}
    \label{Sektor1ab}
\end{figure}
\begin{figure}[h]
    \centering
    \includegraphics[width=0.55\linewidth]{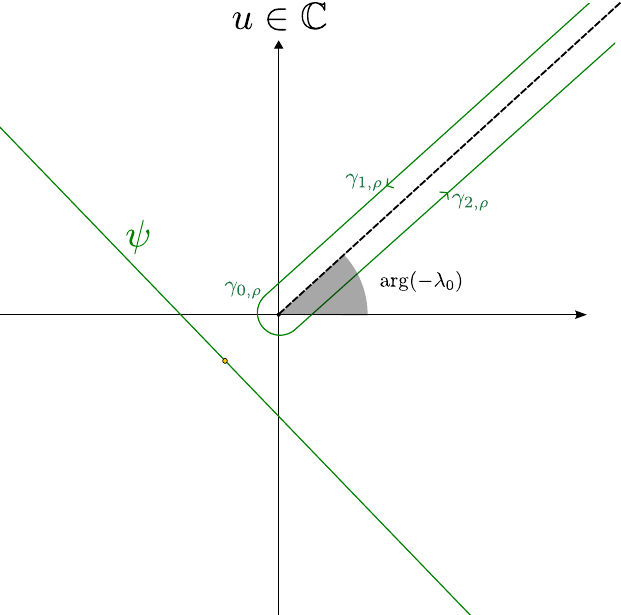}
    \caption{The integration contour used in the proof of Lemma \ref{SN1}.}
    \label{IntegrationContour}
\end{figure}
\begin{lemma}\label{SN1}
    The following holds
    \begin{equation}\nonumber
    \begin{aligned} 
        &m_{02}^{\psi^S}(\epsilon_2) - m_{02}^{\psi^U}(\epsilon_2) = \left(F_0\sqrt{\frac{2\pi\epsilon_2}{\lambda_0}}\exp((2\epsilon_2)^{-1} \lambda_0),0\right),
    \end{aligned} 
    \end{equation}
    for any $\epsilon_2 \in S(\psi^S,\chi,\delta) \cap S(\psi^U,\chi,\delta)$.
\end{lemma}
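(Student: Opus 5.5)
The plan is to compute the difference directly from the explicit Laplace representation \eqref{z0lo} obtained in the proof of Lemma \ref{T2}. The $w$-component is handled first: the recursion there gives $w_0(\epsilon)\equiv 0$ independently of the direction $\psi$. Hence the second component of $m_{02}^{\psi^S}-m_{02}^{\psi^U}$ vanishes identically, and only the $z_0$-component remains. For it we have
\begin{equation*}
z_0^{\psi^S}(\epsilon)-z_0^{\psi^U}(\epsilon)=\frac{-F_0}{\sqrt{\lambda_0}}\left(\int_0^{\infty \e^{\i\psi^S}}-\int_0^{\infty \e^{\i\psi^U}}\right)(2s+\lambda_0)^{-\frac12}\e^{-s\epsilon^{-1}}\,ds .
\end{equation*}

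By \eqref{thetasu} and \eqref{psiSU}, $\psi^S$ and $\psi^U$ are obtained from the singular direction $\arg(-\lambda_0)$ by rotations of $\mp 90^\circ$, so they lie on either side of the branch cut. First take $\epsilon$ in the intersection of the two sectors with $\arg\epsilon$ close to $\arg(-\lambda_0)$. For such $\epsilon$, $\e^{-s/\epsilon}$ decays exponentially in the closed sector between the two rays. The algebraic factor $(2s+\lambda_0)^{-1/2}$ is bounded there away from the branch point. Joining the two rays by an arc at infinity, Jordan-type estimates make the arc contribution vanish. Cauchy's theorem then collapses the difference onto a Hankel-type contour around the cut, as in Figure \ref{IntegrationContour}: the contour comes in from infinity along one side of the ray from $-\lambda_0/2$ in direction $\arg(-\lambda_0)$, circles the branch point, and goes back out along the other side. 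The small circle contributes nothing in the limit, since the integrand is $\mathcal O(|2s+\lambda_0|^{-1/2})$. The general statement for all $\epsilon$ in the intersection of the sectors then follows by analytic continuation, because both sides are analytic there.

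On the cut, parametrize $2s+\lambda_0=-\lambda_0\tau$ with $\tau>0$. This gives $s=-\tfrac{\lambda_0}{2}(1+\tau)$ and $ds=-\tfrac{\lambda_0}{2}d\tau$. The exponential factorizes as $\e^{\lambda_0/(2\epsilon)}\e^{\lambda_0\tau/(2\epsilon)}$, where $\mathrm{Re}(\lambda_0/\epsilon)<0$ for the chosen $\epsilon$. The two boundary values of $(2s+\lambda_0)^{-1/2}=(-\lambda_0\tau)^{-1/2}$ differ by a sign, so the two sides of the cut add rather than cancel. The difference therefore becomes twice a single integral,
\begin{equation*}
\pm\frac{-F_0}{\sqrt{\lambda_0}}\,\e^{\lambda_0/(2\epsilon)}\,\lambda_0(-\lambda_0)^{-\frac12}\int_0^\infty \tau^{-\frac12}\e^{\lambda_0\tau/(2\epsilon)}\,d\tau .
\end{equation*}
Using $\int_0^\infty \tau^{-1/2}\e^{-a\tau}d\tau=\Gamma(\tfrac12)a^{-1/2}=\sqrt{\pi/a}$ with $a=-\lambda_0/(2\epsilon)$, the product of the prefactors collapses to $F_0\sqrt{2\pi\epsilon/\lambda_0}$. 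This gives the claimed formula.

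The main obstacle is the branch bookkeeping. One must check that, with the chosen branch cut and the determination of $\sqrt{\lambda_0}$, the product $\lambda_0(-\lambda_0)^{-1/2}a^{-1/2}/\sqrt{\lambda_0}$ together with the orientation ($\psi^S$ minus $\psi^U$) yields exactly $+\sqrt{2\pi\epsilon/\lambda_0}$ and not its negative or a factor $\pm\i$. I would verify this by fixing $\lambda_0=1$, where $\psi^S=\pi/2$, $\psi^U=-\pi/2$, and the cut lies along the negative real axis. There I would compute directly with principal branches and compare with \cite[Theorem 2.1]{hayes2015a}. The general case then follows by continuity in $\arg(\lambda_0)$, since the configuration rotates rigidly with $\lambda_0$.
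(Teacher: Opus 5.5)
Your argument follows the paper's proof. Both directions give $w_0\equiv 0$, and the $z$-difference is a difference of two Laplace integrals of the explicit Borel transform. The contour is then collapsed onto a Hankel loop around the cut from $-\lambda_0/2$ in direction $\arg(-\lambda_0)$. The small circle is $\mathcal O(\sqrt{\rho})$, the two sides of the cut add, and $\Gamma(\tfrac12)$ finishes. You first restrict to $\arg\epsilon$ near $\arg(-\lambda_0)$, which is what lets you discard the arc at infinity on the side of the cut; the paper does this without comment. Two steps, however, are wrong.

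\emph{The closing analytic continuation.} Since $\psi^S-\psi^U=\pi$ and each sector has opening $\pi+\chi<2\pi$, the set $S(\psi^S,\chi,\delta)\cap S(\psi^U,\chi,\delta)$ consists of two disjoint sectors of opening $\chi$. One is centred on $\arg(-\lambda_0)$ and the other on $\arg(\lambda_0)$. Continuation cannot carry the identity from one to the other, and the identity is in fact false on the second. There, $Z_0(s)=-(2s+\lambda_0)^{-1/2}F_0/\sqrt{\lambda_0}$ is analytic and bounded on $\{\real(s/\lambda_0)\ge 0\}$. So both Laplace rays can be rotated through $\arg\lambda_0$ onto the ray of direction $\arg\epsilon_2$ without meeting $s=-\lambda_0/2$. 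Hence $m_{02}^{\psi^S}=m_{02}^{\psi^U}$ there. The claimed right-hand side, by contrast, is nonzero when $F_0\neq0$, and even exponentially large since $\real(\lambda_0/\epsilon_2)>0$. The identity therefore holds only on the component around $\arg(-\lambda_0)$. That is also all the paper's Hankel deformation proves, and all that is used: in Proposition \ref{finalprop} the lemma is applied at $\epsilon_2=\delta\e^{-2\i\theta^C}$ with $-2\theta^C\in(\psi^U,\psi^S)$. State the lemma on that component and drop the continuation step.

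\emph{The orientation.} You have $\psi^S$ and $\psi^U$ interchanged. By \eqref{thetasu} and \eqref{psiSU}, $\psi^S=\arg\lambda_0+\tfrac{3\pi}{2}=\arg(-\lambda_0)+\tfrac{\pi}{2}$ and $\psi^U=\arg(-\lambda_0)-\tfrac{\pi}{2}$. For $\lambda_0=1$ this gives $\psi^S\equiv-\tfrac{\pi}{2}$ and $\psi^U=\tfrac{\pi}{2}$, the reverse of what you wrote. The orientation of the Hankel loop is exactly what fixes the sign you defer to this check, so the check as set up returns the wrong sign. At $\epsilon_2=-\eta$ with $\eta>0$, the correct value is $z_{02}^{\psi^S}-z_{02}^{\psi^U}=+\i F_0\sqrt{2\pi\eta}\,\e^{-1/(2\eta)}$, and your labelling produces its negative.

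The lemma also leaves the branch of $\sqrt{\epsilon_2/\lambda_0}$ unspecified. The correct branch has argument in $(\tfrac{\pi}{4},\tfrac{3\pi}{4})$. Equivalently, $\sqrt{\epsilon_2}=\sqrt{\delta}\,\e^{-\i\theta^C}$, which at $\epsilon_2=-\eta$ means $\sqrt{\epsilon_2}=\i\sqrt{\eta}$. Proposition \ref{finalprop} needs precisely this branch to write $r_1\e^{\i\theta^C}\sqrt{\epsilon_2}=\sqrt{\epsilon}$, and with it only the correct orientation reproduces the stated formula. Once the labels are fixed, your rigid-rotation argument in $\arg\lambda_0$ transfers the $\lambda_0=1$ computation to general $\lambda_0$. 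This works provided $\sqrt{\lambda_0}$ is taken with the same determination of $\arg\lambda_0$ as in \eqref{thetasu}.
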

\begin{proof}
    From  Lemma \ref{T2}, we have
    \begin{equation}\nonumber
        z_{02}^\psi(\epsilon_2) = \frac{-F_0}{\sqrt{\lambda_0}}\int_0^{\infty \e^{\i\psi}} (2s+\lambda_0)^{-\frac{1}{2}}\e^{-s \epsilon_2^{-1}}ds, \quad w_{02}^\psi(\epsilon_2) = 0,
    \end{equation}
    leading to the difference:
    \begin{equation}\nonumber
        z_{02}^{\psi^S}(\epsilon_2) - z_{02}^{\psi^U}(\epsilon_2) = \frac{-F_0}{\sqrt{\lambda_0}}\int_{\infty\e^{-2\i\theta^{U,E}}}^{\infty\e^{-2\i\theta^{S,E}}} (2s+\lambda_0)^{-\frac{1}{2}}\e^{-s \epsilon_2^{-1}}ds, \quad w_{02}^{\psi^S}(\epsilon_2)-w_{02}^{\psi^U}(\epsilon_2) = 0,
    \end{equation}for any $\epsilon_2 \in S(\psi^S,\chi,\delta) \cap S(\psi^U,\chi,\delta)$.
   Now, define $I_\gamma$ as
   \begin{equation}\nonumber
              I_{\gamma} = \int_{\gamma} (2s+\lambda_0)^{-\frac{1}{2}}\e^{-s \epsilon_2^{-1}}ds,
   \end{equation}
   with $\gamma$ a contour in the complex $s$-plane connecting the two points at infinity (${\infty\e^{-2\i\theta^{U,E}}}$ and ${\infty\e^{-2\i\theta^{S,E}}}$) and avoiding the branch cut along the ray defined by $\arg(-\lambda_0)$, see Figure \ref{Laplacefig}.
       By performing the change of coordinates $2s + \lambda_0 = u$, we obtain:
    \begin{equation}\nonumber
        I_\gamma = \frac{1}{2}\int_\gamma (u)^{-\frac{1}{2}} \e^{\frac{-(\lambda_0-u)}{2\epsilon_2}}du,
    \end{equation}
    where we continue to denote the contour by $\gamma$ in the new coordinates. 
    We now deform $\gamma$ to the path shown in the Figure \ref{IntegrationContour}, avoiding the branch cut. The integration is subsequently divided into 3 domains $\gamma =  \gamma_{0,\rho} \cup \gamma_{1,\rho} \cup \gamma_{2,\rho}$ where $\gamma_{0,\rho}$ is the half-circle with radius $\rho$ around the branch point, and $\gamma_{1,\rho},\gamma_{2,\rho}$ are the corresponding contours that are parallel but $\pm\rho$ away from the branch cut.
     
    Firstly, we let $I_{\gamma_{0}}$ denote the contribution to $I_\gamma$ from $\gamma_{0,\rho}$ given by $u = \rho \e^{i\theta}$, $\theta\in J$, with $J\subset \mathbb R$ an appropriate interval of length $\pi$. We directly obtain 
    \begin{equation}\nonumber
        I_{\gamma_{0}} = \int_{J} \frac{1}{2} (\rho \e^{\i \theta})^{-\frac{1}{2}} \e^{\frac{1}{2}(\lambda_0-\rho \e^{\i \theta}) \epsilon_2^{-1}} \rho \i \e^{\i \theta} d\theta = \mathcal{O(\sqrt{\rho})},
    \end{equation}
        leading to a vanishing contribution in the $\rho \rightarrow 0$ limit.


    Next, we let $I_{\gamma_1}$ denote the contribution to $I_\gamma$ from $\gamma_{1,\rho}$ given by $u=r\e^{i\arg(-\lambda_0)}$, $r\ge 0$, in the $\rho \rightarrow 0$ limit. 
    This leads to
    \begin{equation}\nonumber
        I_{\gamma_1} = -\frac{1}{2}\e^{\frac{1}{2}\i \arg(-\lambda_0)} \e^{\frac{1}{2\epsilon_2}\lambda_0}\int_{0}^\infty r^{-\frac{1}{2}}\exp\left(\frac{-\epsilon_2^{-1}}{2}\e^{\i\arg(-\lambda_0)}r\right)dr.
    \end{equation}
    $I_{\gamma_1}$ is related to $I_{\gamma_2}$ (being the contribution to $I_\gamma$ from $\gamma_{1,\rho}$) by taking $\theta \rightarrow \theta + 2\pi$  and reversing the direction of integration. We find
    \begin{align*}
        I_\gamma = I_{\gamma_1}+I_{\gamma_2} &= -\e^{\frac{1}{2}\i \arg(-\lambda_0)} \e^{\frac{1}{2\epsilon_2}\lambda_0}\int_{0}^\infty r^{-\frac{1}{2}}\exp\left(\frac{-\epsilon_2^{-1}}{2}\e^{\i\arg(-\lambda_0)}r\right)dr\\
        & = -\sqrt{(2\pi \epsilon_2)}\e^{\frac{1}{2\epsilon_2}\lambda_0},
    \end{align*}
    as $\rho\to 0$.
    Finally, we have 
    \begin{equation}\nonumber
        z_{02}^{\psi^S}(\epsilon_2)-z_{02}^{\psi^U}(\epsilon_2) = F_0\sqrt{\frac{2\pi \epsilon_2}{\lambda_0}}\e^{\frac{1}{2\epsilon_2}\lambda_0}, 
    \end{equation}
    which completes the proof.
\end{proof}

\subsection{Asymptotic series in the $\vert \breve \mu \vert = 1$ chart}\label{asymp1}
In the  $\vert \breve \mu \vert=1$-chart, we obtain the following  system from \eqref{ELI}:
\begin{equation}\eqlab{bchart2}
\begin{aligned}
    &z_1'=r_1[-\i\vert\hat \lambda\vert^2z_1-\i\e^{-\i\theta}\overline{\hat \lambda}\epsilon_1F-\epsilon_1z_1\real[\i\hat \lambda\e^{\i\theta}]],\\
    &w_1' = -\i\e^{-\i\theta}\overline{\hat \lambda}\nu w_1-\i\e^{-\i\theta}\overline{\hat \lambda}r_1\epsilon_1G-r_1\epsilon_1w_1\real[\i\hat \lambda\e^{\i\theta}],\\
    &\theta' =\epsilon_1r_1\real[-\hat \lambda\e^{\i\theta}],\\
    &r_1 '= r_1^2\epsilon_1\real[\i\hat \lambda\e^{\i\theta}],\\
    &\epsilon_1'= -2\epsilon_1^2r_1\real[\i\hat \lambda\e^{\i\theta}],
\end{aligned}
\end{equation}
using \eqref{chart1} as a change of coordinates,
where we (for simplicity) write: 
\begin{equation}\eqlab{simpl}
    \begin{aligned}
        & \hat \lambda = \hat \lambda(r_1\e^{\i\theta}),\\
        & \nu = \nu(\mu_*+r_1\e^{\i\theta}),\\
        & F = F(r_1z_1,r_1w_1,r_1^2\epsilon_1,\mu_*+r_1\e^{\i\theta}),\\
        &G = G(r_1z_1,r_1w_1,r_1^2\epsilon_1,\mu_*+r_1\e^{\i\theta}).
    \end{aligned}
\end{equation}
In the derivation of \eqref{bchart2}, we have divided the right hand side by the common factor $r_1$ (desingularization). 
Analogously to the previous section, we now look for formal invariant manifolds  $(z_1,w_1)(\theta,r_1,\epsilon_1)$ of \eqref{bchart2} that are formal series with respect to $r_1$ and $\epsilon_1$ with analytic $\theta$-dependent coefficients. They satisfy the invariance equation in a formal sense:
\begin{equation}\eqlab{mu1i}
    \begin{aligned}
        &\partial_\theta z_{1}\epsilon_1\real[-\hat \lambda\e^{\i\theta}]+\partial_{r_1} z_{1}\epsilon_1r_1\real[\i\hat \lambda\e^{\i\theta}]-2\partial_{\epsilon_1}z_{1}\epsilon_1^2\real[\i\hat \lambda\e^{\i\theta}] \\
        & = -\i\vert\hat \lambda\vert^2z_1-\i\e^{-\i\theta}\overline{\hat \lambda}\epsilon_1F-\epsilon_1z_1\real[\i\hat \lambda\e^{\i\theta}].\\
        &\partial_\theta w_{1}\epsilon_1r_1\real[-\hat \lambda\e^{\i\theta}]+\partial_{r_1} w_{1}\epsilon_1r_1^2\real[\i\hat \lambda\e^{\i\theta}]-2\partial_{\epsilon_1}w_{1}\epsilon_1^2r_1\real[\i\hat \lambda\e^{\i\theta}] \\
        & = -\i\e^{-\i\theta}\overline{\hat \lambda}\nu w_1-\i\e^{-\i\theta}\overline{\hat \lambda}r_1\epsilon_1G-r_1\epsilon_1w_1\real[\i\hat \lambda\e^{\i\theta}].\\
    \end{aligned}
\end{equation}
For later reference, we write $\zeta_1 = (z_1,w_1)$ and introduce the related PDE-system:
\begin{equation}\eqlab{Zeta1inv}
    \begin{aligned}
    &(\partial_\theta \zeta_1) \epsilon_1r_1\real[-\hat \lambda\e^{\i\theta}]+(\partial_{r_1} \zeta_1)\epsilon_1r_1^2\real[\i\hat \lambda\e^{\i\theta}]-2(\partial_{\epsilon_1} \zeta_{1})\epsilon_1^2r_1\real[\i\hat \lambda\e^{\i\theta}] \\
        & = P(\theta,r_1) \zeta_1-\i\e^{-\i\theta}\overline{\hat \lambda}r_1\epsilon_1H-r_1\epsilon_1\zeta_1\real[\i\hat \lambda\e^{\i\theta}].
\end{aligned}
\end{equation}
Here we have defined
\begin{equation}\eqlab{P}
P(\theta,r_1)= \operatorname{diag}(-\i r_1\vert\hat \lambda\vert^2,-\i\e^{-\i\theta}\overline{\hat \lambda}\nu),
\end{equation}
and $H = (F,G)$, recall \eqref{simpl}. We have the following preliminary result. 

\begin{lemma}\label{FormalSeries1}
Let $(z_2,w_2)(r_2,\epsilon_2)$ be the formal series of either type I of Lemma \ref{T1} or type II of Lemma \ref{T2}. Then
    \begin{equation}\nonumber
        (z_1,w_1)(\theta,r_1,\epsilon_1) = \e^{\i\theta}(z_2,w_2)(r_1\e^{\i\theta},\epsilon_1\e^{-2\i\theta}),
    \end{equation}
    defines a formal series solution of \eqref{mu1i}.
\end{lemma}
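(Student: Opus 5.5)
The plan is to show that the change of coordinates \eqref{chanco} maps invariant manifolds of the $\breve\mu=1$-chart system to invariant manifolds of the $\vert\breve\mu\vert=1$-chart system, and that this remains true at the level of formal invariance equations. The key observation is that both \eqref{BlowupChart1} and \eqref{bchart2} come from the same blown-up vector field. \eqref{BlowupChart1} is the blowup of \eqref{ComplexI}. \eqref{bchart2} is the blowup of \eqref{ELI}, which is \eqref{ComplexI} multiplied by the scalar $-\i\overline{\lambda(\mu)}$, followed by division by $r_1$. Multiplying a vector field by a nonvanishing scalar function does not change its invariant manifolds, since the orbits are only reparametrized. Away from $r_1=0$ and $\lambda=0$, a graph $\zeta=\zeta(\mu,\epsilon)$ is therefore invariant for \eqref{ELI} if and only if it is invariant for \eqref{ComplexI}. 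The same equivalence holds for the desingularized chart versions.

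First, I write the invariance condition in a coordinate-free form. A graph $(z,w)=\Phi(\mu,\epsilon)$ is invariant for \eqref{ComplexI} if and only if the $(z,w)$-component of the vector field equals $D\Phi$ applied to the $(\mu,\epsilon)$-component. The $\mu$-component of \eqref{ELI} is $-\i\epsilon\overline{\lambda}$, and its $(z,w)$-components are the corresponding components of \eqref{ComplexI} multiplied by the same factor. Hence this condition holds identically for \eqref{ComplexI} if and only if it holds for \eqref{ELI}. The scalar factor is not holomorphic in $\mu$, so $\mu$ has to be treated as a real two-dimensional variable. This is consistent with \eqref{bchart2}, where $(\theta,r_1)$ are real.

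Next, I transport the statement to the charts. Given $(z_2,w_2)(r_2,\epsilon_2)$ solving \eqref{PDEs}, the corresponding manifold in original variables is $z=r_2z_2(r_2,\epsilon_2)$ with $r_2=\mu-\mu_*$ and $\epsilon_2=\epsilon/r_2^2$, and similarly for $w$. By \eqref{chanco}, $r_2=r_1\e^{\i\theta}$ and $\epsilon_2=\epsilon_1\e^{-2\i\theta}$, and $z_1=z/r_1=\e^{\i\theta}z_2$. This gives the claimed formula $(z_1,w_1)=\e^{\i\theta}(z_2,w_2)(r_1\e^{\i\theta},\epsilon_1\e^{-2\i\theta})$.

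For actual analytic graphs, substituting this formula into \eqref{mu1i} and using the chain rule, with $\partial_\theta$, $\partial_{r_1}$ and $\partial_{\epsilon_1}$ expressed through $\partial_{r_2}$ and $\partial_{\epsilon_2}$, reduces \eqref{mu1i} to \eqref{PDEs}. The first line of \eqref{PDEs} must be multiplied back by $r_2$ to undo its desingularization, and the result is then multiplied by $-\i\overline{\hat\lambda}\,\e^{-\i\theta}r_1$ and divided by $r_1$. This direct check is a routine but careful computation. It can also be done as an identity of differential operators. The $(r_2,\epsilon_2)$-derivative part of \eqref{BlowupChart1}, multiplied by $-\i\overline{\lambda}$, is exactly the $(\theta,r_1,\epsilon_1)$-derivative part of \eqref{bchart2}. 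The only terms needing attention are the ones generated by the prefactor $\e^{\i\theta}$ under $\partial_\theta$ and by the $r_1$-rescaling. These account for the terms $-\epsilon_1z_1\real[\i\hat\lambda\e^{\i\theta}]$ and $-\epsilon_2r_2z_2$.

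Finally, I pass to formal series. Both sides of the identity above are polynomial in derivatives and analytic in the unknowns, so it holds termwise. For type I series the coefficients $\xi_{n2}$ are analytic on $B_\delta$. For type II series the coefficients $m_{n2}^\psi$ are analytic on a sector, and $\epsilon_1\e^{-2\i\theta}$ lies in that sector for the relevant $\theta$. Substituting $r_2=r_1\e^{\i\theta}$ and $\epsilon_2=\epsilon_1\e^{-2\i\theta}$ turns these into formal series in $r_1$ and $\epsilon_1$ with $\theta$-dependent coefficients. The operator identity commutes with this substitution because it is linear in derivatives and the substitution is monomial. Each coefficient therefore satisfies \eqref{mu1i} order by order. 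The main obstacle is the bookkeeping of the real/complex chain rule, since $\theta$ is real but $\e^{\i\theta}$ enters multiplicatively. I would handle it with the Wirtinger-free observation that all $(r_2,\epsilon_2)$-dependence is holomorphic, so $\partial_\theta=\i r_2\partial_{r_2}-2\i\epsilon_2\partial_{\epsilon_2}$ and $\partial_{r_1}=\e^{\i\theta}\partial_{r_2}$ on such functions. This makes the verification mechanical.
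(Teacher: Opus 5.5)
Your proposal is correct and is essentially the paper's argument. The paper simply asserts that \eqref{PDEs} and \eqref{mu1i} are equivalent under \eqref{chanco} and transports the formal series of Lemmas~\ref{T1} and~\ref{T2}; your observation that \eqref{bchart2} is, via \eqref{chanco}, the $-\i\overline{\lambda}$-rescaled and $r_1$-desingularized form of \eqref{BlowupChart1}, together with holomorphy in $(r_2,\epsilon_2)$ and the sector condition $\epsilon_1\e^{-2\i\theta}\in S(\psi,\chi,\delta)$ for type II, is exactly the justification of that equivalence.

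When you carry out the chain-rule check you will find the angular factors $\real[\i\hat\lambda\e^{2\i\theta}]$ and $\real[-\hat\lambda\e^{2\i\theta}]$. The printed $\e^{\i\theta}$ in \eqref{bchart2} and \eqref{mu1i} is a misprint, as the saddle directions of Lemma~\ref{Lemma21} confirm. You will also find that the residuals of the two lines of \eqref{mu1i} are $-\i\e^{\i\theta}\overline{\hat\lambda}$ and $-\i\overline{\hat\lambda}$ times the corresponding residuals of \eqref{PDEs}, which differs slightly from the factor you wrote. Both multipliers are nonvanishing near $r_1=0$ since $\hat\lambda(0)=\lambda_0\neq0$.
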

\begin{proof}
    The system \eqref{PDEs} is equivalent with \eqref{mu1i} under the change of coordinates \eqref{chanco}. The existence of the formal series in the $\vert \breve \mu \vert = 1$-chart then follows from applying the change of coordinates to the unique formal series of Lemma \ref{T1} or Lemma \ref{T2}.
\end{proof}
If we take the type I series, then we obtain formal series in $\epsilon_1$ with analytic $(\theta,r_1)$-dependent coefficients:
\begin{equation}\eqlab{FRØ1}
    (z_1,w_1)(\theta,r_1,\epsilon_1) = \sum_{n=1}^{\infty} \e^{\i(-2n+1)\theta}\xi_{n2}(r_1 \e^{\i\theta}) \epsilon_1^n.
\end{equation}
\begin{lemma}\label{FSU3}
    The two formal series \eqref{FS1} and \eqref{FRØ1} agree under the change of coordinates \eqref{chart2} with:
    \begin{equation}
\nonumber\xi_n(\mu_*+r_1\e^{\i\theta})r_1^{2n-1} = \e^{i(-2n+1)\theta} \xi_{n2}(r_1\e^{\i\theta}).
    \end{equation}
\end{lemma}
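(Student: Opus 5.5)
The plan is to reduce the statement to the uniqueness part of Lemma \ref{FormS1}, passing through the $\breve\mu=1$-chart. First, the formal series \eqref{FS1} is rewritten in the $\breve\mu=1$-chart coordinates \eqref{chart1}. With $z=r_2z_2$, $w=r_2w_2$, $\mu=\mu_*+r_2$, $\epsilon=r_2^2\epsilon_2$, the series becomes
\begin{equation*}
(z_2,w_2)=\sum_{n=1}^\infty \xi_n(\mu_*+r_2)\,r_2^{2n-1}\epsilon_2^n .
\end{equation*}
On any domain $r_2\in B_\delta\setminus\{0\}$ with $\mu_*+r_2\in\Sigma_0$, this is a formal solution of the invariance equations \eqref{PDEs}, since these are obtained from the invariance equation of \eqref{VecSys} by the same coordinate change. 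The step where $r_2$ is divided out in the $z_2$ equation is legitimate for $r_2\neq0$.

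Next, compare coefficients of $\epsilon_2^n$. Both $\sum_n \xi_n(\mu_*+r_2)r_2^{2n-1}\epsilon_2^n$ and the type I series $\sum_n\xi_{n2}(r_2)\epsilon_2^n$ of Lemma \ref{T1} solve \eqref{inveqn2} order by order in $\epsilon_2$. The recursion is algebraic, because $A(r_2)$ is invertible: at order $\epsilon_2^n$ the coefficient is determined by $A(r_2)^{-1}$ applied to expressions in the lower coefficients and their $r_2$-derivatives. Hence the coefficients are uniquely determined pointwise in $r_2$, and they coincide on the punctured disc intersected with the region where both are defined. I will run this as an induction on $n$, starting from the trivial order zero term. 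The conclusion is
\begin{equation*}
\xi_n(\mu_*+r_2)\,r_2^{2n-1}=\xi_{n2}(r_2).
\end{equation*}

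The final step is to substitute $r_2=r_1\e^{\i\theta}$. This gives $\xi_n(\mu_*+r_1\e^{\i\theta})r_1^{2n-1}\e^{\i(2n-1)\theta}=\xi_{n2}(r_1\e^{\i\theta})$, which is exactly the claimed identity after multiplying by $\e^{\i(-2n+1)\theta}$. It is also consistent with \eqref{FRØ1} and Lemma \ref{FormalSeries1}, via $\epsilon_1=\epsilon_2\e^{2\i\theta}$ from \eqref{chanco}.

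The main obstacle is the domain issue. Lemma \ref{FormS1} only gives $\xi_n$ on compact sets $\Sigma_0$ that avoid $\mu_*$, while $\xi_{n2}$ lives on $B_\delta$. I would handle this by taking $\Sigma_0$ to be an annulus $\{\delta'\le|\mu-\mu_*|\le\delta''\}$, comparing there, and using analyticity (the identity theorem) together with uniqueness of the order-by-order solution to conclude on overlaps. As a byproduct, $\xi_n(\mu)(\mu-\mu_*)^{2n-1}$ extends analytically to $\mu_*$.
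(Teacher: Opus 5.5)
Your proof is correct and is essentially the paper's argument run in the opposite direction. The paper blows the $\vert\breve\mu\vert=1$-chart series down via \eqref{chart2} and invokes uniqueness of the coefficients of \eqref{FS1}; you instead blow \eqref{FS1} up to the $\breve\mu=1$-chart, use that the type I recursion is algebraic (since $A(r_2)$ is invertible) to identify coefficients, and then substitute $r_2=r_1\e^{\i\theta}$. Your comparison on a punctured disc around $\mu_*$ also makes explicit the domain-overlap point that the paper leaves implicit.
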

\begin{proof}
    We write the formal series \eqref{FRØ1} in the coordinates of $(z,w,\mu,\epsilon)$ using \eqref{chart2}. Since the coefficients of \eqref{FS1} are unique, the result follows.
\end{proof}
Analogously for type II series, then we obtain formal series in $r_1$ with analytic $(\theta,\epsilon_1)$-dependent coefficients. 
By Lemma \ref{FormalSeries1}, we obtain formal series solutions:
\begin{equation} \eqlab{Type2,2}
    (z_1,w_1)(\theta,r_1,\epsilon_1) = \sum_{n=0}^{\infty} \e^{\i(n+1)\theta} m_{n2}^{\psi}(\epsilon_1 \e^{-2\i\theta}) r_1^n,
\end{equation}
where $\psi\in \{\psi^S,\psi^U\}$. If we take 
\begin{align*}
    \psi = \psi^S = -2\theta^{S,E},
\end{align*}
recall \eqref{psiSU}, 
then the coefficients of \eqref{Type2,2} are well defined for:
\begin{equation}\eqlab{s1def}
    \epsilon_1 \in [0,\delta), \quad \theta \in S^S:= \left\{ \theta \in \R / (2\pi \Z)\,:\,  \vert \theta - \theta^{S,E} \vert < \frac{\pi + \chi}{4}\right\}.
\end{equation} 
Recall that $\chi$ is any number in $(0,\pi)$. This follows from \eqref{chanco} and the fact that $\epsilon_2\in S(\psi^S,\chi,\delta)$. Similarly, if we take \begin{align*}
    \psi = \psi^U = -2\theta^{U,E},
\end{align*}
see \eqref{psiSU},
then the coefficients of \eqref{Type2,2} are well defined for:
\begin{equation}\nonumber
    \epsilon_1 \in [0,\delta), \quad \theta \in S^U:= \left\{ \theta \in \R / (2\pi \Z)\,:\,  \vert \theta - \theta^{U,E} \vert < \frac{\pi + \chi}{4}\right\}.
\end{equation}
By assumptions \ref{B1}-\ref{B2}, we can take $\chi\in (0,\pi)$ large enough such that $\theta^C \in S^S\cap S^U$, recall Figure \ref{Anglesab}. 

\section{Analytic continuation of the slow manifolds}\label{secextension}
We now turn to extending the slow manifolds into the complex plane. 
\subsection{Continuation to $\mu = 0$}
Our first result concerns the extension to the non-hyperbolic point at the origin.  For this, we consider \eqref{VecSys}.

\begin{proposition}\label{Extension1}
    Fix $\delta >0$ small enough. Then the fixed slow manifolds can be analytically extended  up to $\mu=0$, such that:
    \begin{align*}
        & \xi = \xi^a(\mu,\epsilon), \quad \mu \in [\mu_s+\delta,0],\\
        & \xi = \xi^r(\mu,\epsilon), \quad \mu \in [0,\mu_u-\delta],
    \end{align*}
    for all $0 \leq \epsilon \ll 1$. In particular, $\xi^a(\mu,\epsilon)$, $\xi^r(\mu,\epsilon)$ are both asymptotic to the formal series \eqref{FS1} for all relevant $\mu$.
    
\end{proposition}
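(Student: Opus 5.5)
Since $\mu'=\epsilon$ and the real segments $[\mu_s+\delta,0]$ and $[0,\mu_u-\delta]$ are compact and contain neither $\mu_*$ nor $\overline{\mu_*}$, I would work directly with the real-time flow of \eqref{VecSys} (equivalently \eqref{ReS1}) and avoid elliptic paths at this stage. The plan is to transport $\xi^a$ forward from a real neighbourhood of $\mu_s$ to $\mu=0$, and $\xi^r$ backward from a neighbourhood of $\mu_u$ to $\mu=0$, while showing the extended graphs stay $\kO(\epsilon)$-close to the formal series \eqref{FS1}. By Lemma \ref{lemmasym} and the time reversal $t\mapsto -t$, the repelling case is identical to the attracting one, so I treat only $\xi^a$.

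\textbf{Step 1 (quasi-slow-manifold).} Fix $N$ and let $\zeta_N(\mu,\epsilon)=\sum_{n=1}^N \xi_n(\mu)\epsilon^n$, which is analytic on a compact real neighbourhood $I$ of $[\mu_s+\delta,0]$ by Lemma \ref{FormS1}. Set $\zeta=\zeta_N+\eta$. The remainder then satisfies
\[
\epsilon\frac{d\eta}{d\mu}=\bigl(\operatorname{diag}(\lambda,\nu)+\kO(\epsilon)\bigr)\eta+\kO(\eta^2)+\kO(\epsilon^{N+1}).
\]
By Lemma \ref{FormS1}, $\xi^a$ is asymptotic to \eqref{FS1} near $\mu_s$, so $\eta(\mu_s+\delta/2)=\kO(\epsilon^{N+1})$.

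\textbf{Step 2 (Gronwall estimate).} On $[\mu_s+\delta/2,0]$ we have $\real\lambda=\real\nu=\alpha\le 0$ by \ref{A1}. Hence the fundamental matrix of the linear part has norm at most $\exp(\epsilon^{-1}\int\alpha\,d\mu+C|\mu-\mu_0|)\le C'$. A variation-of-constants and bootstrap argument then gives $|\eta(\mu)|\le C\epsilon^{N}$ uniformly up to and including $\mu=0$; the time length is $\kO(\epsilon^{-1})$, but the $\epsilon^{-1}$ factor from the time length is absorbed against $\epsilon^{N+1}$, and the nonlinear term $\kO(\eta^2)$ is handled by the bootstrap. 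In particular, no normal hyperbolicity is needed at $\mu=0$, because the accumulated contraction $\int_{\mu_0}^{\mu}\alpha\,d\mu$ stays nonpositive. Since $N$ is arbitrary, $\xi^a$ is asymptotic to \eqref{FS1} on $[\mu_s+\delta,0]$. Invariance and analyticity in $\mu$ follow from the flow being analytic and from $\mu'=\epsilon$ being a translation.

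\textbf{Main obstacle.} The delicate point is uniformity near $\mu=0$, where $\alpha\to 0$ and Fenichel theory fails. The key observation above is that we only need a nonexpansion bound, not contraction, and this is what the sign condition in \ref{A1} provides. I would also confirm that the $\kO(\epsilon)$ perturbation in the linear part contributes only $\exp(C|\mu-\mu_0|)$, which is bounded on the compact interval.
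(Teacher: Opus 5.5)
Your proposal is correct and is essentially the paper's argument: you subtract a truncation of the formal series \eqref{FS1}, use $\real\lambda=\real\nu=\alpha\le 0$ on the attracting side to get an $\kO(1)$ (non-expansion) bound on the fundamental matrix of the linearized remainder equation, close a variation-of-constants/bootstrap estimate over the $\kO(\epsilon^{-1})$ passage time, and then let $N$ be arbitrary. The only quibble is that the repelling case follows from the time reversal alone; Lemma \ref{lemmasym} (the conjugation symmetry) plays no role there.
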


\begin{proof}
Lemma \ref{FormS1} provides the existence of a formal invariant manifold:
\begin{equation}\eqlab{TildeFS10}
    \zeta = \sum_{n=1}^{\infty}\xi_n(\mu)\epsilon^n,
\end{equation}
of \eqref{VecSys} 
with $\mu \in \Sigma_0$ and $\xi_n$ analytic for all $n \in \N$. Now, fix $N\in \N$ and define $\tilde{\zeta}$ by
\begin{equation}\eqlab{TildeFS1}
    \zeta = \sum_{n=1}^{2N-1} \xi_n(\mu)\epsilon^n + \epsilon^N \tilde \zeta
\end{equation}
As a result of Lemma \ref{FormS1}, the fixed slow manifolds $\xi^a(\mu,\epsilon)$ and $\xi^r(\mu,\epsilon)$ are both asymptotic to \eqref{TildeFS10}. Hence, in the coordinates $\tilde \zeta = (\tilde z, \tilde w)$ the slow manifolds take the form $\tilde \zeta = \epsilon^N\tilde \xi^a(\mu,\epsilon)$, $\mu \in [\mu_s-\delta,-\delta]$, and $\tilde \zeta = \epsilon^N\tilde \xi^r(\mu,\epsilon)$, $\mu \in [\delta,\mu_u+\delta]$, for all $ 0 \leq \epsilon \ll 1$ and $\delta > 0$ sufficiently small. 
The proof is then obtained by working in the $\tilde \zeta$ coordinates, and will be done for the attracting part only. The argument for the repelling case is similar (using the backward flow) and therefore left out.
 
Viewing \eqref{TildeFS1} as a change of coordinates, we now show that $\tilde \zeta = (\tilde z,\tilde w)$ solves a system of the form:
\begin{equation}\eqlab{TildeSystem}
        \begin{aligned}
        &\tilde \zeta' = (\operatorname{diag}(\lambda(\mu),\nu(\mu)) +\kO (\epsilon)) \tilde \zeta + \kO(\epsilon^N),\\
        &\mu'=\epsilon,
    \end{aligned}
\end{equation}
with all terms being analytic with respect to $\mu \in \Sigma_0$. For this purpose, let
\begin{equation}\nonumber
    W^N(\mu,\epsilon):  = \sum_{n=1}^{2N-1} \xi_n(\mu)\epsilon^n.
\end{equation}
Then $\zeta=W^N(\mu,\epsilon)$ defines an invariant manifold of \eqref{VecSys} up to order $\mathcal O(\epsilon^{2N})$.
In further details, there exists an analytic function $R(\mu,\epsilon)$, $\mu \in \Sigma_0$, such that:
\begin{equation}\eqlab{Expansion1}
    \begin{aligned}
     &\epsilon^{2N} R(\mu,\epsilon):=\operatorname{diag}(\lambda(\mu),\nu(\mu))W^N + \epsilon H(W^N,\mu,\epsilon)-\epsilon \frac{d}{d\mu}W^N ,
    \end{aligned}
\end{equation}
for all $0 \leq \epsilon \ll 1$. Using this, we obtain:
\begin{equation}\nonumber
    \epsilon^N \dot{\tilde \zeta} = \epsilon^N \operatorname{diag}(\lambda(\mu),\nu(\mu))\tilde \zeta + \operatorname{diag}(\lambda(\mu),\nu(\mu))W^N + \epsilon H(W^N+\epsilon^N\tilde \zeta,\mu,\epsilon)-\epsilon \frac{d}{d\mu}W^N,
\end{equation}
upon applying the change of coordinated defined by \eqref{TildeFS1}.
Now, expanding the last three terms and using \eqref{Expansion1}, we obtain the result of \eqref{TildeSystem}.
 
 Next, let $\mathcal{M}(t,\mu(0),\epsilon)$ denote the fundamental matrix with  $\mathcal{M}(0,\mu(0),\epsilon)=\mathbb I_2$ associated with the linear system \begin{equation}\tilde \zeta ' = (\operatorname{diag}(\lambda(\mu),\nu(\mu)) +\kO (\epsilon)) \tilde \zeta, \quad \mu(t) = \mu(0) + \epsilon t.\end{equation} Here we have fixed $\mu(0) \in [\mu_u-\delta,-\delta]$. 
 The system (\theequation) is the linear part of the system for the remainder $\tilde \zeta$ in \eqref{TildeSystem}.
  Moreover, let
\begin{equation}\label{TransitionTime41}
    T= \frac{\vert \mu(0) \vert}{\epsilon},
\end{equation}
denote the transition time
from $\mu = \mu(0)$ to $\mu = 0$. 
 Suppressing the dependency on $\mu(0)$ and $\epsilon$, then we have 
   %
    \begin{equation}\nonumber
        \norm{ \kM(t)\kM(s)^{-1}} \leq \e^{c_1\epsilon(t-s)} \leq \kO(1),
    \end{equation}
    since $\real(\lambda(\mu))\leq 0$ and $\real(\nu(\mu))\leq 0$ for $0\leq s \leq t\leq T$ by assumption \ref{A1}. To achieve this, one can e.g. use the fixed point formulation:
    \begin{align*}
    \kM(t)\kM(s)^{-1}  =&\exp\left({\operatorname{diag}\left(\int_s^t \lambda(\tau) d\tau,\int^t_s \nu(\tau) d\tau\right)}\right) \\
    &+ \int_s^t \exp\left({\operatorname{diag}\left(\int_s^u \lambda(\tau) d\tau,\int^u_s \nu(\tau) d\tau\right)}\right) \mathcal O(\epsilon) \kM(u)\kM(s)^{-1} du,  
    \end{align*}
    obtained by variation of constants and the fact that $$\norm{\exp\left({\operatorname{diag}\left(\int_s^t \lambda(\tau) d\tau,\int^t_s \nu(\tau) d\tau\right)}\right)}\le 1.$$
     
    We now solve  \eqref{TildeSystem} for $\tilde \zeta(t)$ through a fixed point formulation with initial condition $\tilde \zeta(0) = \epsilon^N \tilde \xi^a(\mu(0),\epsilon)$ in the following way
    \begin{equation}\nonumber
        \tilde \zeta(t) = \kM(t,\epsilon)\epsilon^N \tilde \xi^a(\mu(0),\epsilon) + \int_{0}^{t} \kM(t,\epsilon) \kM(s,\epsilon)^{-1}\kO(\epsilon^N)  ds,
    \end{equation}
    also obtained by variation of constants. 
    Assume that $|\tilde \zeta(t)|\le C$ for all $t\in [0,T]$. This then leads to the following estimate:
    \begin{equation}\eqlab{Ineq41}
        \vert \tilde \zeta(t) \vert \leq \kO(1)\epsilon^{N} + \kO(1)t \epsilon^{N}\leq \kO(1)\epsilon^{N-1},
    \end{equation}
    for $0<\epsilon\ll 1$ and all $0\leq t \leq T$. The last estimate was obtained using the explicit form of the transition time. We see that (\theequation) implies that $\tilde \zeta(t)$ is uniformly bounded for any $N \in \N$.
    From this we conclude that the attracting slow manifold can be extended up to the point $\mu=0$. Moreover, by the arbitrariness of $N$ it follows that the extended manifold is asymptotic to the formal series. This completes the proof.
\end{proof}
\subsection{Continuation to $\kC_*$ uniformly bounded away from $\mu_*$}
Next, we turn to extending the slow manifold into the complex $\mu$-plane. Again, we consider $\Sigma_0$ (i.e. uniformly bounded away from $\mu_*$), but now we study the system \eqref{ELI} (in vector form):
\begin{equation}
        \begin{aligned}
        & \zeta' = \operatorname{diag}(-\i \vert\lambda(\mu)\vert^2,-\i \overline{\lambda(\mu)}\nu(\mu))  \zeta -\i \overline{\lambda(\mu)} H(\zeta,\mu,\epsilon),\\
        &\mu'=-\i\epsilon\overline{\lambda(\mu)},
    \end{aligned}
\end{equation}
This system is equivalent to \eqref{VecSys} under the rescaling $- \i \overline{\lambda(\mu)}$ and hence the formal series of Lemma \ref{Extension1} also define formal invariant manifold solution of (\theequation).
We reintroduce $\tilde \zeta$ through a truncation of the series \eqref{TildeFS10}:
\begin{equation}\eqlab{TildeFS2}
    \zeta = \sum_{n=1}^{2N-1} \xi_n(\mu)\epsilon^n + \epsilon^N \tilde \zeta.
\end{equation}
Then by similar calculations as in the proof in Proposition \ref{Extension1}, we obtain the system:
\begin{equation}\eqlab{etildesys}
        \begin{aligned}
        &\tilde \zeta' = (\operatorname{diag}(-\i \vert\lambda(\mu)\vert^2,-\i \overline{\lambda(\mu)}\nu(\mu)) +\kO (\epsilon)) \tilde \zeta + \kO(\epsilon^N),\\
        &\mu'=-\i\epsilon\overline{\lambda(\mu)}.
    \end{aligned}
\end{equation}
\begin{proposition}\label{E4}
    Let $\kC_{0*}$ be any compact arc contained in the interior of $\kC_*$. Then the invariant manifolds can be uniformly extended  up to $\kC_{0*}$, such that 
    \begin{align*}
        & \zeta = \xi^a(\mu,\epsilon), \quad \mu \in \kC_{0*},\\
        & \zeta = \xi^r(\mu,\epsilon), \quad \mu \in \kC_{0*},
    \end{align*}
    for all $0 \leq \epsilon \ll 1$. In particular, $\xi^a(\mu,\epsilon)$, $\xi^r(\mu,\epsilon)$ are both asymptotic to the formal series \eqref{TildeFS10} for all relevant $\mu$.
\end{proposition}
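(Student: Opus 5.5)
We treat only $\xi^a$; $\xi^r$ follows in the same way using the backward elliptic flow and the sign $\real[-\i\overline{\lambda(\mu)}\nu(\mu)]>0$ on the $\mu$-positive side of $\kC_*$. The plan is to run the elliptic flow of \eqref{etildesys} from real initial points $\mu(0)\in[\mu_s+\delta,-\delta]$, where Proposition \ref{Extension1} gives $\tilde\zeta(0)=\kO(\epsilon^{N})$, and to show that $\tilde\zeta$ stays $\kO(\epsilon^{N-1})$ until $\mu(t)$ hits $\kC_{0*}$.

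\textbf{Geometry of the paths.} The elliptic trajectories of \eqref{THIS} preserve $\mathcal H$ from Lemma \ref{lemmaHam}. By \ref{A2}, the trajectory through a real point $\mu(0)\in(\mu_s,0)$ lies inside the triangular region bounded by the stable and unstable manifold branches of $\mu_*$ and the real axis. It therefore enters the upper half plane and must cross $\kC_*$: there are no equilibria in the region except $\mu_*$, and by \ref{B2} the crossing is transverse. Each trajectory crosses $\kC_*$ exactly once, and the crossing point depends continuously on $\mu(0)$. As $\mu(0)\to\mu_s$ the crossing point tends to $\mu_*$, and as $\mu(0)\to 0$ it tends to $0$. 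So, by continuity and compactness, choosing $\delta$ small the initial interval $[\mu_s+\delta,-\delta]$ together with its image under the symmetry of Lemma \ref{lemmasym} covers $\kC_{0*}$. The lower half of $\kC_{0*}$ is reached by the mirrored paths: either use the conjugate-time flow, or directly use the fact that $\overline{\kC_*}=\kC_*$. The segment of $\kC_{0*}$ near $\mu=0$ is handled by Proposition \ref{Extension1} combined with short paths. Along each path the time $T$ to reach $\kC_*$ is $\kO(\epsilon^{-1})$ uniformly, and the path stays in the compact set $\Sigma_0$ away from $\mu_*,\overline{\mu_*}$.

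\textbf{Estimates.} As in Proposition \ref{Extension1}, let $\kM$ be the fundamental matrix of the linear part of \eqref{etildesys}. The $z$-entry has real part $\real(-\i|\lambda|^2)=0$. The $w$-entry has real part $\real[-\i\overline{\lambda}\nu]\le 0$, since by \ref{B1}, \ref{B3} the path remains on the $\mu$-negative side of $\kC_*$ until it hits it: it cannot meet another component of $\kC$ in the region. Hence the unperturbed propagator has norm $\le 1$, and Gronwall applied to the $\kO(\epsilon)$ perturbation gives $\|\kM(t)\kM(s)^{-1}\|\le \e^{c\epsilon(t-s)}=\kO(1)$ for $0\le s\le t\le T$. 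Variation of constants with a bootstrap assumption $|\tilde\zeta|\le C$ then gives $|\tilde\zeta(t)|\le \kO(\epsilon^N)+\kO(T\epsilon^N)=\kO(\epsilon^{N-1})$, which closes the bootstrap. Since $N$ is arbitrary, the extension is asymptotic to \eqref{TildeFS10}.

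\textbf{Consistency.} The resulting $\xi^a$ on $\kC_{0*}$ is the analytic continuation of the original slow manifold. Invariance under \eqref{ELI} is just a time reparametrization of \eqref{ComplexI} with $\lambda\neq 0$, so the graph property is preserved. The curves obtained from different initial points fit together analytically, by analytic dependence on initial conditions.

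The main obstacle is the geometric step: showing that the whole arc $\kC_{0*}$ is reached by paths that stay in the region where $\real[-\i\overline\lambda\nu]\le 0$ and stay uniformly away from $\mu_*$, with transversal crossing. This is where \ref{A2}, \ref{B2}, \ref{B3} are used, together with the conserved quantity to exclude recurrence.
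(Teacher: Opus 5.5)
Your proposal is correct and follows essentially the same route as the paper: elliptic paths from real initial data supplied by Proposition \ref{Extension1}, an $\kO(1)$ bound on $\kM(t)\kM(s)^{-1}$ coming from the non-positive real parts of the diagonal on the $\mu$-negative side of $\kC_*$ (using \ref{B2}--\ref{B3}), and variation of constants with $T=\kO(\epsilon^{-1})$ giving $\tilde\zeta=\kO(\epsilon^{N-1})$. Your explicit handling of the lower half of $\kC_{0*}$ via the symmetry of Lemma \ref{lemmasym} fills in a point that the paper's sketch leaves implicit.
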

\begin{proof}
We only provide a sketch of the proof, as it is similar to the proof of Proposition \ref{Extension1}. 
Consider the fundamental matrix $\mathcal{M}(t,\mu(0),\epsilon)$ with $\mathcal{M}(0,\mu(0),\epsilon)=\mathbb I_2$ associated with the linear system 
\begin{equation}
    \tilde \zeta' = (\operatorname{diag}(-\i \vert\lambda(\mu)\vert^2,-\i \overline{\lambda(\mu)}\nu(\mu)) +\kO (\epsilon)) \tilde \zeta,
\end{equation}
where $\mu = \mu(t)$ is a solution of $\mu' = -\i\epsilon \overline{\lambda(\mu)}$, $\mu(0) \in [\mu_s-\delta,0]$, $0<\delta$ fixed small enough. The system (\theequation) is the linear part of the system for the remainder $\tilde \zeta$. 
By assumption \ref{B2} and \ref{B3} (see also Figure \ref{EBox}), $\mu(T) \in \kC_{0*}$ for some smallest $0 \leq T=T(\mu(0),\epsilon)$ of $\kO(\epsilon^{-1})$. We now suppress the dependency on $\mu(0)$ and $\epsilon$ and proceed as in Proposition \ref{Extension1}: First, we use the fact that the real part of the diagonal matrix is non-positive to bound $\norm{\mathcal{M}(t)\mathcal M(s)^{-1}} = \kO(1)$ for all $0 \leq s \leq t \leq T$. 
Secondly, we consider a fixed point formulation for $\tilde \zeta$ which leads to the following estimate:
$\vert \tilde \zeta(t) \vert \leq \kO(1) \epsilon^{N-1}$ for all $0 \leq \epsilon \ll 1$. By assumption (B2) the result follows.

\end{proof}

\subsection{Continuation to $\kC_*$ near the root $\mu_*$} 
In this section, we consider the elliptic system  in the $\vert \breve \mu \vert = 1$-chart, see \eqref{bchart2}, repeated here for convenience:
\begin{equation}\eqlab{bchart2extra}
\begin{aligned}
    &z_1'=r_1[-\i\vert\hat \lambda\vert^2z_1-\i\e^{-\i\theta}\overline{\hat \lambda}\epsilon_1F-\epsilon_1z_1\real[\i\hat \lambda\e^{\i\theta}]],\\
    &w_1' = -\i\e^{-\i\theta}\overline{\hat \lambda}\nu w_1-\i\e^{-\i\theta}\overline{\hat \lambda}r_1\epsilon_1G-r_1\epsilon_1w_1\real[\i\hat \lambda\e^{\i\theta}],\\
    &\theta' =\epsilon_1r_1\real[-\hat \lambda\e^{\i\theta}],\\
    &r_1 '= r_1^2\epsilon_1\real[\i\hat \lambda\e^{\i\theta}],\\
    &\epsilon_1'= -2\epsilon_1^2r_1\real[\i\hat \lambda\e^{\i\theta}].
\end{aligned}
\end{equation}
First, we consider the truncation of the formal invariant manifold solution in \eqref{Type2,2}: 
\begin{equation}\nonumber
    (z_1,w_1)(\theta,r_1,\epsilon_1) = \sum_{n=0}^{L} \e^{\i(n+1)\theta}m_{n2}^{\psi}(\epsilon_1 \e^{-2\i\theta}) r_1^n, \quad L \in \N,
\end{equation}
with $\psi \in \{\psi^S,\psi^U\}$. This finite sum defines an invariant manifold solution of \eqref{bchart2extra} up to $\mathcal{O}(r_1^{L+1})$ for all $L\in \N$. Similarly, by truncating the type I formal series from Lemma \ref{FormalSeries1} (given by equation \eqref{FRØ1}) 
\begin{equation}\nonumber
    (z_1,w_1)(\theta,r_1,\epsilon_1) = \sum_{n=1}^{M} \e^{\i(-2n+1)\theta}\xi_{n2}(r_1 \e^{\i\theta}) \epsilon_1^n, \quad M \in \N,
\end{equation}
we obtain an invariant manifold solution up to $\kO(\epsilon_1^{M+1})$ for all $M \in \N$. Let $J^\alpha[f], \alpha \in \N$ denote the $\alpha$-jet acting on an analytic function $f$ 
\begin{equation}\nonumber
    J^\alpha[x \mapsto \sum_{n=0}^{\infty} f_nx^n ](x) := \sum_{n=0}^{\alpha} f_nx^n
\end{equation}
We then combine the two formal invariant manifolds (taking $L=2N-1$ and $M=4N-1$) in the following way 
 \begin{equation}\eqlab{DoubleSeries}
    \begin{aligned}
        W_1^N(\theta,r_1,\epsilon_1) &:=  \sum_{n=1}^{2N-1}\e^{\i(-2n+1)\theta}\xi_{n2}(r_1 \e^{\i\theta}) \epsilon_1^n\\
        &+\sum_{n=0}^{4N-1}\e^{i(n+1)\theta}(m^{\psi}_{n2}(\e^{-2i\theta}\epsilon_1)-J^{2N-1}[\epsilon_1 \mapsto m^{\psi}_{n2}(\e^{-2i\theta}\epsilon_1)](\epsilon_1))r_1^n.
    \end{aligned}
\end{equation}
Notice that the terms removed in the second sum by the jet are already accounted for in the first sum, cf. Lemma \ref{Doublesseriescof}. Hence, $(z_1,w_1) = W_1^N(\theta,r_1,\epsilon_1)$ defines an invariant manifold up to terms of $\kO(\epsilon_1^{2N})$ as well as an invariant manifold up to terms of $\kO((r_1^2)^{2N})$.  In conclusion, \eqref{DoubleSeries} is an invariant manifold up to $\kO((r_1^2\epsilon_1)^{2N})$.  More precisely, there is an analytic function $Q(\theta,r_1,\epsilon_1)$ such that 
\begin{equation}\eqlab{Qdef}
    \begin{aligned}
    &(r_1^2\epsilon_1)^{2N}Q(\theta,r_1,\epsilon_1) :=\\
    & -\partial_{\theta}(W^N_1) \epsilon_1r_1\real[-\hat \lambda \e^{\i\theta}]-\partial_{r_1}(W^N_1)\epsilon_1r_1^2\real[\i\hat \lambda \e^{\i\theta}]+2\partial_{\epsilon_1}(W^N_1)\epsilon_1^2r_1\real[\i\hat \lambda \e^{\i\theta}] \\
        & +P(\theta,r_1) W^N_1-\i\e^{-\i\theta}\overline{\lambda}_0r_1\epsilon_1H-r_1\epsilon_1W^N_1\real[\i\hat \lambda \e^{\i\theta}],
    \end{aligned}
\end{equation}
recall \eqref{Zeta1inv}. This procedure for combining different asymptotic series on the blowup sphere follows \cite[Lemma 4.8]{kriszm1}. In the following we fix $\psi = \psi^S$. Then \eqref{Qdef} is well-defined for $\theta \in S^S$, recall \eqref{s1def}.
\begin{proposition}\label{Coordinates2}
    Fix $N \in \N$, $N \gg 1$. Then the transformation 
    \begin{equation}\nonumber
        (\tilde{\zeta}_1,\theta,r_1,\epsilon_1) \mapsto (\zeta_1,\theta,r_1,\epsilon_1), 
    \end{equation}
    where $\zeta_1 = (z_1,w_1)$ and $r_1,\epsilon_1 \in [0,\delta), \theta \in S^S$, defined by
    \begin{equation}\nonumber
        \zeta_1 = W_1^N(\theta,r_1,\epsilon_1)+(r_1^2\epsilon_1)^N\tilde{\zeta}_1,
    \end{equation}
    brings the system \eqref{bchart2extra} into the following form 
    \begin{equation}\eqlab{r1e1system}
        \begin{aligned}
        & \tilde\zeta_1' =  (P(\theta,r_1)+ r_1\epsilon_1 H^N(\theta,r_1,\epsilon_1))\tilde{\zeta}_1+(\epsilon_1r_1^2)^NR^N(\tilde \zeta_1,\theta,r_1,\epsilon_1),\\
        &\theta' =\epsilon_1r_1\real[-\hat \lambda \e^{\i\theta}],\\
        &r_1 '= r_1^2\epsilon_1\real[\i\hat \lambda \e^{\i\theta}],\\
        &\epsilon_1'= -2\epsilon_1^2r_1\real[\i\hat \lambda \e^{\i\theta}],
        \end{aligned}
    \end{equation}
    with 
    \begin{equation}\eqlab{Hn}
    H^N(\theta,r_1,\epsilon_1) = -\operatorname{Re} [\i \lambda_0\e^{\i\theta}] \mathbb{I}_{2}+\kO(r_1).
    \end{equation}
\end{proposition}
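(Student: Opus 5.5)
The transformation is a direct substitution, analogous to the derivation of \eqref{TildeSystem} in the proof of Proposition \ref{Extension1}. I would write $\zeta_1 = W_1^N + \rho^N\tilde\zeta_1$ with $\rho := r_1^2\epsilon_1$. The first observation is that $\rho$ is conserved along \eqref{bchart2extra}:
\begin{align*}
\rho' = 2r_1 r_1'\epsilon_1 + r_1^2\epsilon_1' = 2r_1^3\epsilon_1^2\real[\i\hat\lambda\e^{\i\theta}] - 2r_1^3\epsilon_1^2\real[\i\hat\lambda\e^{\i\theta}] = 0.
\end{align*}
This is expected, since $\rho=\epsilon$ in the original variables. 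Hence $(\rho^N\tilde\zeta_1)' = \rho^N\tilde\zeta_1'$, and no extra linear term in $\tilde\zeta_1$ is produced by differentiating the prefactor. The $(\theta,r_1,\epsilon_1)$-equations do not depend on $\zeta_1$ and therefore remain unchanged.

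Next I would differentiate $\zeta_1 = W_1^N + \rho^N\tilde\zeta_1$ along the flow and insert the $\zeta_1$-equation of \eqref{bchart2extra} in its vector form, which is the right-hand side of \eqref{Zeta1inv}. The derivative $\frac{d}{dt}W_1^N(\theta,r_1,\epsilon_1)$ equals the left-hand side of \eqref{Zeta1inv} evaluated at $W_1^N$. Using the definition \eqref{Qdef} of $Q$, the terms not involving $\tilde\zeta_1$ therefore cancel up to $\rho^{2N}Q$. The remaining terms are:
\begin{itemize}
\item the term $P\rho^N\tilde\zeta_1$;
\item the term $-r_1\epsilon_1\rho^N\tilde\zeta_1\real[\i\hat\lambda\e^{\i\theta}]$;
\item the nonlinear difference $-\i\e^{-\i\theta}\overline{\hat\lambda}\,r_1\epsilon_1\big(H(r_1(W_1^N+\rho^N\tilde\zeta_1),\ldots) - H(r_1W_1^N,\ldots)\big)$.
\end{itemize}
For the last item I would use Taylor's theorem with integral remainder in the first two arguments of $H$. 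Its linear part is $r_1\epsilon_1\cdot r_1\,\mathcal O(1)\,\rho^N\tilde\zeta_1$, which is $\mathcal O(r_1)$ relative to $r_1\epsilon_1\rho^N\tilde\zeta_1$. Its quadratic part is $\mathcal O(r_1^3\epsilon_1\rho^{2N}|\tilde\zeta_1|^2)$. Dividing everything by $\rho^N$ gives
\begin{align*}
\tilde\zeta_1' = \big(P + r_1\epsilon_1 H^N\big)\tilde\zeta_1 + \rho^N R^N(\tilde\zeta_1,\theta,r_1,\epsilon_1),
\end{align*}
with $H^N = -\real[\i\hat\lambda\e^{\i\theta}]\mathbb I_2 + \mathcal O(r_1)$. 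In $R^N$ I would collect $Q$ together with the quadratic remainder divided by $\rho^N$; the latter is $\mathcal O(\rho^N)$ times bounded terms and hence analytic. Finally, since $\hat\lambda(r_1\e^{\i\theta}) = \lambda_0 + \mathcal O(r_1)$, the leading term becomes $-\real[\i\lambda_0\e^{\i\theta}]$, which gives \eqref{Hn}.

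The main point requiring care is that $W_1^N$ and $Q$ are genuinely well defined and analytic, and bounded uniformly for $\theta\in S^S$ and $r_1,\epsilon_1\in[0,\delta)$. This is needed so that $R^N$ and the $\mathcal O(r_1)$ term in $H^N$ are uniformly bounded. Here I would rely on Lemma \ref{T2}, which gives that the $m^{\psi^S}_{n2}(\e^{-2\i\theta}\epsilon_1)$ are analytic on the sector, and on Lemma \ref{Doublesseriescof}, which shows that the jet subtraction in \eqref{DoubleSeries} makes $Q$ divisible by $\rho^{2N}$. The Gevrey estimates for the 1-sums give the needed boundedness of the jet remainders $m_{n2}-J^{2N-1}[m_{n2}] = \mathcal O(\epsilon_1^{2N})$ on the sector.
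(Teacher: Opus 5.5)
Your proposal is correct and follows essentially the same route as the paper: substitute $\zeta_1=W_1^N+(r_1^2\epsilon_1)^N\tilde\zeta_1$, use \eqref{Qdef} to cancel the $\tilde\zeta_1$-independent terms up to $(r_1^2\epsilon_1)^{2N}Q$, write the $H$-difference via the mean value theorem (integral form), and divide by $(r_1^2\epsilon_1)^N$. You also make explicit two points the paper leaves implicit: the conservation $(r_1^2\epsilon_1)'=0$, and moving the $\tilde\zeta_1$-dependent quadratic part of the mean-value coefficient into $R^N$, so that $H^N$ depends only on $(\theta,r_1,\epsilon_1)$ as the statement requires.
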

\begin{proof}
    We compute 
    \begin{equation}\nonumber
       \begin{aligned}
            (r_1^2\epsilon_1)^{N}\tilde \zeta_1' &=\zeta_1'  -P(\theta,r_1) W^N_1+\i\e^{-\i\theta}\overline{\lambda}_0r_1\epsilon_1H\\
            &+r_1\epsilon_1W^N_1\real[\i\hat \lambda \e^{\i\theta}]+ (r_1^2\epsilon_1)^{2N}Q(\theta,r_1,\epsilon_1)\\
            & = (r_1^2\epsilon_1)^{N}P(\theta,r_1) \tilde\zeta_1-(r_1^2\epsilon_1)^{N}\i\e^{-\i\theta}\overline{\lambda}_0r_1^2\epsilon_1 \widetilde H \tilde \zeta_1\\
            &-(r_1^2\epsilon_1)^{N}r_1\epsilon_1\tilde \zeta_1\real[\i\hat \lambda \e^{\i\theta}] +(r_1^2\epsilon_1)^{2N}Q(\theta,r_1,\epsilon_1),
       \end{aligned}
    \end{equation}
    using \eqref{Qdef} and the mean value theorem in the last equality:
    \begin{equation}\nonumber
        r_1(r_1^2\epsilon_1)^{N} \widetilde H((r_1^2\epsilon_1)^{N}\tilde \zeta_1,\theta,r_1,\epsilon_1) \tilde \zeta_1 := H(r_1\zeta_1,\mu_*+r_1\e^{\i\theta},r_1^2\epsilon_1) - H(r_1W^N_1,\mu_*+r_1\e^{\i\theta},r_1^2\epsilon_1).
    \end{equation}
    The statement now follows from a division by $(r_1^2\epsilon_1)^N$.
\end{proof}
To proceed, we now consider the $(\theta,r_1,\epsilon_1)$-system 
\begin{equation}\eqlab{øre}
    \begin{aligned}
    &\theta' =\epsilon_1r_1\real[-\hat \lambda \e^{\i\theta}],\\
        &r_1 '= r_1^2\epsilon_1\real[\i\hat \lambda \e^{\i\theta}],\\
        &\epsilon_1'= -2\epsilon_1^2r_1\real[\i\hat \lambda \e^{\i\theta}],
        \end{aligned}
\end{equation}which decouples, and work in the following domain $(\theta,r_1,\epsilon_1) \in S^S \times [0,\delta]\times[0,\delta]$ with $\delta>0$ small enough. We then define the section 
\begin{equation}\nonumber
    \Sigma_{in}=\{ (\theta,r_1,\epsilon_1) \in S^S \times [0,\delta]\times[0,\delta]\,:\, r_1 = \delta \}.
\end{equation}
Moreover, let $\kC_{1*} $ denote the blowup of the set defined by $(\mu, \epsilon) \in \kC_* \times \{0 \leq \epsilon \leq \epsilon_0\}$, with $0 < \epsilon_0 \ll 1$, in the $(\theta,r_1,\epsilon_1)$-coordinates. Then by assumption \ref{B1}-\ref{B2}, $\kC_{1*}$ intersects our domain as  a graph over $(r_1,\epsilon_1)$ near $\theta = \theta^C \in (\theta^{S,E},\theta^{U,E})$, recall Figure \ref{Anglesa}. See also Figure \ref{BlowupFigur}. In particular, $\kC_{1*}\cap \{r_1=0\}$ is given by $\theta=\theta^C$, $\epsilon_1\ge 0$. This should be clear enough. 

Now, by proceeding as in the proof of Proposition \ref{E4} (since we are uniformly bounded away from $\mu_*$), we obtain an extension of the attractive slow manifold as a graph over $(\theta,r_1,\epsilon_1) \in \Sigma_{in}$. We now wish to extend this to $\kC_{1*}$.
\begin{figure}
    \centering
    \includegraphics[width=0.8\linewidth]{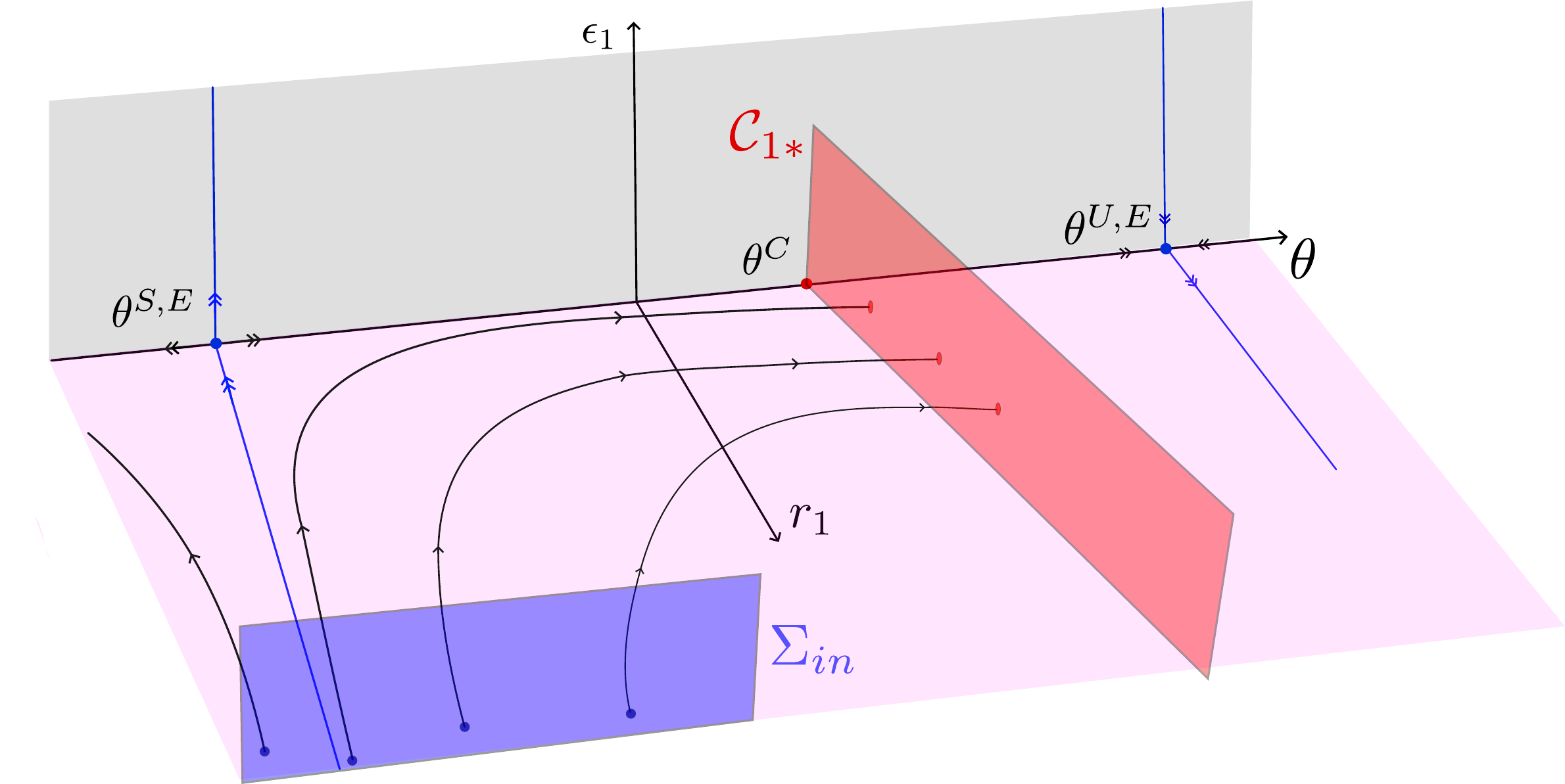}
    \caption{The dynamics of the elliptic system in the $(\theta,r_1,\epsilon_1)$-coordinates. The angles $\theta^{S,E}$ and $\theta^{U,E}$ are the angles associated with the tangents of the stable and unstable manifolds at $\mu_*$, recall Lemma \ref{Lemma21}, whereas $\theta^C$ is the angle associated with the tangent of $\kC_*$ at $\mu_*$, see also Figure \ref{Anglesab}. The set $\kC_{1*}$ denotes the blowup of $\kC_*\times \{0\le \epsilon<\epsilon_0\}$ in the $(\theta,r_1,\epsilon_1)$-coordinates.} 
    \label{BlowupFigur}
\end{figure}
Since $(\theta,r_1,\epsilon_1) = (\theta^{S,E},0,0)$ is a hyperbolic saddle (this follows from Lemma \ref{Lemma21}), and the flow is transverse to $\kC_{1*}$, all points in $\kC_{1*}$ with $r_1>0$ reach $\Sigma_{in}$ under application of the backward flow, see Figure \ref{BlowupFigur}. We define the subset $\widetilde \Sigma_{in} \subset \Sigma_{in}$ as the resulting set of points. 
Now, as a consequence of the uniqueness of the formal series (see Lemma \ref{FSU3}), we conclude that the attracting slow manifold takes the following graph form on $\widetilde \Sigma_{in}$:
\begin{equation}\eqlab{InCond}
    \tilde \zeta_1 = \mathcal{O}((r_1^2\epsilon_1)^N), \quad (\theta,r_1,\epsilon_1) \in \widetilde \Sigma_{in},
\end{equation}
using the $(\tilde \zeta_1,\theta,r_1,\epsilon_1)$-coordinates of Proposition \ref{Coordinates2}. We will use the set \eqref{InCond} as initial conditions for the flow of \eqref{r1e1system} and extend the attracting slow manifold as a graph over $\kC_{1*}$.

Let $(\theta(t),r_1(t),\epsilon_1(t))$ with $ (\theta(0),r_1(0),\epsilon_1(0)) \in \widetilde \Sigma_{in}$ denote a forward solution of \eqref{øre}. At the same time, we define $T(\theta(0),r_1(0),\epsilon_1(0))>0$ as the transition time from $\widetilde \Sigma_{in}$ to $\kC_{1*}$. Then we have the following result:
\begin{lemma}\label{TransitionTIme}
    There exists a constant $K>0$ such that 
    \begin{equation}\nonumber
        T(\theta(0),r_1(0),\epsilon_1(0))\leq K\epsilon_1(0)^{-1}, \quad \forall\, (\theta(0),r_1(0),\epsilon_1(0)) \in \widetilde \Sigma_{in}.
    \end{equation}
\end{lemma}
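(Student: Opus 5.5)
The plan is to exploit the conserved quantities of the decoupled system \eqref{øre} to reduce the transition to a one-dimensional problem in $\theta$, and then to bound the time from below-speed considerations. First, the product $r_1^2\epsilon_1$ is conserved along \eqref{øre}: indeed $(r_1^2\epsilon_1)'=2r_1\epsilon_1 r_1'+r_1^2\epsilon_1' = 2r_1^3\epsilon_1^2\real[\i\hat\lambda\e^{\i\theta}]-2r_1^3\epsilon_1^2\real[\i\hat\lambda\e^{\i\theta}]=0$. This is simply the statement $\epsilon=r_1^2\epsilon_1$ is constant. So along a trajectory starting in $\widetilde\Sigma_{in}$ we have $\epsilon_1(t)=\epsilon_1(0)\delta^2/r_1(t)^2\ge \epsilon_1(0)$ as long as $r_1(t)\le\delta$, and $\epsilon_1(t)r_1(t)=\epsilon_1(0)\delta^2/r_1(t)\ge \epsilon_1(0)\delta$.

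Next I would introduce the new time $ds=\epsilon_1r_1\,dt$ (the desingularization undone, equivalently the time along the planar elliptic flow in the $\mu$-plane rescaled by $r_1$). In this time, \eqref{øre} becomes $d\theta/ds=\real[-\hat\lambda\e^{\i\theta}]$, $dr_1/ds=r_1\real[\i\hat\lambda\e^{\i\theta}]$, which is just the polar form of the elliptic flow \eqref{THIS} near $\mu_*$ (blown up, with $\hat\lambda=\lambda_0+\kO(r_1)$). On the invariant circle $r_1=0$ the $\theta$-dynamics $d\theta/ds=\real[-\lambda_0\e^{\i\theta}]$ has the equilibria $\theta^{S,E}$ and $\theta^{U,E}$ (and their antipodes; check $\real[\lambda_0\e^{\i\theta^{S,E}}]=0$ from \eqref{thetasu}) and is strictly monotone on $(\theta^{S,E},\theta^{U,E})$, which contains $\theta^C$ by \eqref{thetaCin}. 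The key claim is then that the transition $s$-time $S$ from $\widetilde\Sigma_{in}$ to $\kC_{1*}$ is uniformly bounded: $S\le K'$. Granting this, since $dt=ds/(\epsilon_1r_1)\le ds/(\epsilon_1(0)\delta)$, we get $T\le K'/(\delta\epsilon_1(0))$, i.e. $K=K'/\delta$.

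The main obstacle is the uniform bound on $S$, since $\widetilde\Sigma_{in}$ contains points with $\theta(0)$ arbitrarily close to $\theta^{S,E}$ (those whose orbits pass near the saddle $(\theta^{S,E},0,0)$ and reach $\kC_{1*}$ only at tiny $r_1$). Here the $s$-time spent near the hyperbolic saddle grows like $\log(1/r_1(S))$, so a naive uniform bound fails; instead I would not bound $S$ uniformly but bound $\int dt=\int ds/(\epsilon_1r_1)=\int r_1\,ds/(\epsilon_1(0)\delta^2)$, using the conservation law. Near the saddle, in the $s$-time $r_1$ decays exponentially ($dr_1/ds\approx -|\lambda_0| r_1$, as $\theta^{S,E}$ is the stable direction) while $\theta$ departs exponentially from $\theta^{S,E}$; hence $\int r_1\,ds\le \delta\int_0^\infty \e^{-c s}ds+\kO(\delta)=\kO(\delta)$, uniformly, because away from the saddle neighbourhood (a compact piece of the domain with $\theta$ bounded away from $\theta^{S,E}$ and $\theta$ strictly moving towards $\theta^C$ by \ref{B2} and transversality of $\kC_{1*}$) the $s$-time is uniformly bounded and $r_1\le\delta$. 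I would make this rigorous by a standard linearization (or a Lyapunov-type estimate $dr_1/ds\le -c r_1$) in a fixed neighbourhood of $(\theta^{S,E},0)$ in the $(\theta,r_1)$-plane, plus compactness elsewhere. This yields $T\le \int r_1ds/(\epsilon_1(0)\delta^2)\le K\epsilon_1(0)^{-1}$.
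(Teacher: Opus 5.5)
Your argument is correct and is essentially the paper's proof: the paper also splits off a neighbourhood of $\theta^{S,E}$, where it uses the monotone growth of $\epsilon_1$ (equivalently $r_1'\le -B\epsilon$) to bound the time spent there by $\delta/(B\epsilon)$, and bounds the remaining time via $\vert\theta'\vert\ge \epsilon B/\delta$; these are exactly your estimates $\int r_1\,ds\le \delta/c$ and ``bounded $s$-time with $r_1\le\delta$'' rewritten in the original time through $dt=r_1\,ds/\epsilon$, so your preliminary claim $S\le K'$, which you rightly retract, can simply be dropped. One small caveat: your check $\real[\lambda_0\e^{\i\theta^{S,E}}]=0$ fails as written, because substituting $\mu=\mu_*+r_1\e^{\i\theta}$ into \eqref{THIS} actually produces $\e^{2\i\theta}$ rather than $\e^{\i\theta}$ in the $(\theta,r_1)$-equations of \eqref{bchart2}, and with $\e^{2\i\theta}$ your statements about the equilibria $\theta^{S,E},\theta^{U,E}$ (and their antipodes) and the monotonicity between them all hold.
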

\begin{proof}
    We eliminate $\epsilon_1$ by the conversation $\epsilon = r_1^2\epsilon_1$.
    Let $$B = \min \left\vert \real[-\hat \lambda \e^{\i\theta}] \right\vert,$$ with $\min$ taking over the subset of the domain $S^S \times [0,\delta]\times [0,\delta]$ with $\theta$ strictly bounded away from $\theta^{S,E}$. By assumption $B$ is strictly positive on this domain, see Figure \ref{BlowupFigur} 
    \begin{equation}\nonumber
        \theta' \geq r_1^{-1}\epsilon B \geq \delta^{-1} \epsilon B,
    \end{equation}
    using $r_1(t) \leq \delta$ in the second inequality. The bound on $T$ follows for this subset of initial conditions upon using that $\epsilon_1(0) = \frac{\epsilon}{\delta^2}$.

    Now we consider initial conditions close to $\theta^{S,E}$. Suppose that $\theta(t)$ remains uniformly close to $\theta^{S,E}$ for $t \in [0,T_0]$ for some $T_0 < T$. In this time period, $\epsilon_1$ (which is real) is strictly increasing 
    \begin{equation}\nonumber
        \epsilon_1' \geq 2 B \epsilon_1^{\frac{3}{2}}\sqrt{\epsilon},
    \end{equation}
    where now $B = \min \vert \real[-\i\overline \lambda_0 \e^{-\i\theta}]\vert$ with the minimum taking over the domain with $\theta$ close to $\theta^{S,E}$. By simple integration, this leads to the estimate 
    \begin{equation}\nonumber
        T_0 \leq \frac{\delta}{B\epsilon}.
    \end{equation}
    The point $(\theta(T_0),r_1(T_0),\epsilon_1(T_0))$ is then an initial condition uniformly bounded away from $\theta^{S,E}$. The result therefore follows from the first part of the proof.
\end{proof}

With $(\theta(t),r_1(t),\epsilon_1(t))$ as above, we now turn to the linear equation
\begin{equation}\eqlab{Ls1}
    \begin{aligned}
        \tilde\zeta_1' &=  (P(\theta,r_1)+ r_1\epsilon_1 H^N(\theta,r_1,\epsilon_1))\tilde{\zeta}_1\\
        &=(P(\theta,r_1)- r_1\epsilon_1 \operatorname{Re} [\i \hat \lambda\e^{\i\theta}] \mathbb{I}_{2}+\kO(\epsilon))\tilde{\zeta}_1\\
        & = (P(\theta,r_1)-r_1'r_1^{-1} \mathbb{I}_{2}+\kO(\epsilon))\tilde{\zeta}_1
    \end{aligned}
\end{equation}
using \eqref{Hn} and $r_1^2\epsilon_1=\epsilon$ in the second equality and \eqref{øre} in the third equality. Notice, that the components of $P$ have non-positive real part since we are on the attracting side of $\kC_{1*}$, recall \eqref{P}. In particular, the first component is imaginary, while the second component has negative real part on the relevant side of $\kC_{1*}$.
 
Let $\mathcal{M}(t;(\theta(0),r_1(0),\epsilon_1(0)))$ be the fundamental matrix $\widetilde{\kM}$ with $\mathcal{M}(0;(\theta(0),r_1(0),\epsilon_1(0)) = \mathbb{I}_2$ associated with the linear system \eqref{Ls1} for $(\theta(0),r_1(0),\epsilon_1(0)) \in \widetilde \Sigma_{in}$. Here and henceforth we suppress the dependency on $(\theta(0),r_1(0),\epsilon_1(0))$.
\begin{lemma}\label{lemma45}
    The following bound holds 
    \begin{equation}\eqlab{SBound}
        \norm{\mathcal{M}(t)\mathcal{M}(s)^{-1}} \leq c\epsilon_1(0)^{-1},
    \end{equation}
    for all $0 \leq s \leq t \leq T$ with $c>0$ large enough and independent of $(\theta(0),r_1(0),\epsilon_1(0))$.
\end{lemma}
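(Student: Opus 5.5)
The plan is to exploit the diagonal structure of the linear system \eqref{Ls1}. Up to the $\kO(\epsilon)$ perturbation, the system reads
\begin{equation*}
\tilde\zeta_1' = \bigl(P(\theta,r_1) - r_1' r_1^{-1}\mathbb{I}_2\bigr)\tilde\zeta_1,
\end{equation*}
with $P$ diagonal. The unperturbed fundamental matrix is therefore explicit:
\begin{equation*}
\mathcal M_0(t)\mathcal M_0(s)^{-1} = \frac{r_1(s)}{r_1(t)}\operatorname{diag}\left(\exp\int_s^t P_{11}\,du,\ \exp\int_s^t P_{22}\,du\right).
\end{equation*}
By the discussion preceding the lemma, $P_{11}=-\i r_1|\hat\lambda|^2$ is purely imaginary. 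Also $\real P_{22} = r_1^{-1}\real[-\i\overline{\lambda(\mu)}\nu(\mu)]\le 0$ on the attracting side of $\kC_{1*}$, which follows from \ref{B1}--\ref{B3} since $w_1$ corresponds to $w/r_1$ and the time has been desingularized by $r_1$. Hence the exponential factors have modulus at most $1$, and the whole difficulty sits in the scalar factor $r_1(s)/r_1(t)$.

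To bound this factor, use the conservation law $r_1^2\epsilon_1=\epsilon$, which gives $r_1(s)/r_1(t) = \sqrt{\epsilon_1(t)/\epsilon_1(s)}$. For $s\le t$ we have $r_1(t)\ge \min_{[s,t]} r_1$, so we need a lower bound on $r_1$ along the orbit up to $\kC_{1*}$. The orbit starts at $r_1=\delta$ and ends on $\kC_{1*}$. For the orbits reaching $\kC_{1*}$ near $\theta=\theta^C$, the closest approach to the blown-up point satisfies $r_1\gtrsim \sqrt{\epsilon}$, i.e.\ $\epsilon_1\lesssim 1$. The reason is that in the $\breve\epsilon$-direction (the chart $\epsilon_1$ large) the trajectories leave, because $\epsilon_1'>0$ near $\theta^{S,E}$ and $\theta$ increases monotonically away from it, as in the proof of Lemma \ref{TransitionTIme}. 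Since $\epsilon_1$ is monotone along these orbits in the region between $\theta^{S,E}$ and $\theta^C$ and $\kC_{1*}$ is transverse, I would show that $\epsilon_1(t)\le C$ for $t\in[0,T]$. It then follows that
\begin{equation*}
\frac{r_1(s)}{r_1(t)}\le \frac{\delta}{\sqrt{\epsilon/C}} = \sqrt{C}\,\epsilon_1(0)^{-1/2} \le c\,\epsilon_1(0)^{-1}
\end{equation*}
for $\epsilon_1(0)\le\delta$. This gives the bound \eqref{SBound} for $\mathcal M_0$, with room to spare.

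The perturbation $\kO(\epsilon)$ is then handled by variation of constants and Gronwall. The integral term is bounded by $\int_s^t \|\mathcal M_0(t)\mathcal M_0(u)^{-1}\|\,\kO(\epsilon)\,\|\mathcal M(u)\mathcal M(s)^{-1}\|\,du$. By Lemma \ref{TransitionTIme}, $T\le K\epsilon_1(0)^{-1} = K\delta^2\epsilon^{-1}$, so $\epsilon T=\kO(1)$. Combined with the $\kO(\epsilon_1(0)^{-1/2})$ bound on $\mathcal M_0$, a naive Gronwall estimate gives a multiplicative factor $\exp(\kO(\epsilon T\,\epsilon_1(0)^{-1/2}))$, which is too large.

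\textbf{This is the main obstacle.} To fix it, I would not treat $r_1'/r_1$ separately. Instead I would rescale $\hat\zeta = r_1\tilde\zeta_1$, which removes the $-r_1'r_1^{-1}$ term exactly. The system for $\hat\zeta$ is then diagonal with non-positive real parts plus an $\kO(\epsilon)$ perturbation. Its transition matrix is bounded by $e^{\kO(\epsilon T)}=\kO(1)$, as in Proposition \ref{E4}. Undoing the rescaling reintroduces only the factor $r_1(s)/r_1(t)\le c\,\epsilon_1(0)^{-1/2}\le c\,\epsilon_1(0)^{-1}$, which completes the argument.

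The step to verify carefully is the uniform bound $\epsilon_1(t)\le C$ up to $\kC_{1*}$, i.e.\ that the orbits do not approach $r_1=0$ faster than $\sqrt\epsilon$. I would prove it from the phase portrait in Figure \ref{BlowupFigur}, using that $\theta^C$ is bounded away from $\theta^{S,E}$ by \eqref{thetaCin}. If instead one only controls $\epsilon_1(t)\lesssim\epsilon_1(0)^{-1}$, the same argument still yields exactly the stated $\epsilon_1(0)^{-1}$ bound.
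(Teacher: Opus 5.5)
Your final argument (rescale by $r_1$ to absorb the $-r_1'r_1^{-1}\mathbb{I}_2$ term, bound the rescaled transition matrix by $\kO(1)$ using $\real P\le 0$ and $\epsilon T=\kO(1)$, then pay only the factor $r_1(s)/r_1(t)$) is essentially the paper's proof, which simply asserts $\delta^{-1}\epsilon\le r_1(t)\le\delta$ along the orbit, i.e.\ your fallback $\epsilon_1(t)\le\epsilon_1(0)^{-1}$. One caution on the step you flag: your monotonicity justification does not hold in general, since to leading order $r_1$ is minimized where an orbit crosses $\theta^{U,H}$, and e.g.\ for \eqref{lambdanulin} with $\imag(\lambda_0)<0$ one has $\theta^C>\theta^{U,H}$, so $\epsilon_1$ rises and then falls before reaching $\kC_{1*}$; obtain the lower bound on $r_1$ instead from the conserved quantity $\mathcal H=\tfrac12 r_1^2\real[\lambda_0\e^{2\i\theta}]+\kO(r_1^3)$ of Lemma~\ref{lemmaHam}, which gives $\min_t r_1(t)\ge c\,r_1(T)$ because $\theta^C$ is bounded away from both $\theta^{S,E}$ and $\theta^{U,E}$.
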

\begin{proof}
    Define $\widetilde{\mathcal{M}}$ by
    \begin{equation}\nonumber
        \mathcal{M}(t) = \frac{r_1(0)}{r_1(t)} 
        \widetilde{\mathcal{M}}(t).
    \end{equation}
    Then $\widetilde{\mathcal{M}}(t)$ is the fundamental matrix with $\widetilde{\mathcal{M}}(0) = \mathbb{I}_2$ associated with the problem 
    \begin{equation}\nonumber
        \tilde{\zeta}_1'=(P(\theta,r_1)+\kO(\epsilon))\tilde{\zeta}_1,
    \end{equation}
i.e. we have removed the second term (proportional to the identity $\mathbb I_2$) from the coefficient matrix in \eqref{Ls1}. 
Now, upon using that the components of $P$ have non-positive real part and that $T \leq K \epsilon^{-1}$ for some $c_1>0$ large enough independent of $(\theta(0),r_1(0),\epsilon_1(0))$, we have (as in Proposition \ref{Extension1}) that
%
 %
    \begin{equation}
\nonumber\norm{\widetilde{\mathcal{M}}(t)\widetilde{\mathcal{M}}(s)^{-1}} \leq c_1,
    \end{equation}
    for all $0 \leq s \leq t \leq T$. 
 The result then follows from estimating: 
    \begin{equation}\nonumber
        \norm{\mathcal{M}(t)\mathcal{M}(s)^{-1}}= \norm{\widetilde{\mathcal{M}}(t)\widetilde{\mathcal{M}}(s)^{-1}} \frac{ r_1(s)}{ r_1(t)}   
        \leq c_1\frac{\delta^2}{\epsilon}.
    \end{equation}
    Here we have used that $\delta^{-1}\epsilon \leq r_1(t) \leq \delta$ for all $t \in [0,T]$.
\end{proof}
We can now write the following fixed point formulation:
\begin{equation}\nonumber
    \begin{aligned}
        \widetilde{\zeta}_1(t) &= \mathcal{M}(t)\widetilde \zeta_1(0)  \\ &+\int_{0}^{t}\mathcal{M}(t)\mathcal{M}(s)^{-1}\epsilon^NR^N(\widetilde{\zeta}_1(s),\theta(s),r_1(s),\epsilon_1(s))ds.
    \end{aligned}
\end{equation}
using variation of constants, 
for the solution $\widetilde{\zeta}_1(t)$ of \eqref{r1e1system}  with $\widetilde \zeta_1(0) = \kO(\epsilon^N)$, recall \eqref{InCond}. Here we have used that  $r_1^2\epsilon_1 = \epsilon$, see \eqref{chart2}. Suppose that 
    \begin{equation}\nonumber
        \vert \widetilde \zeta_1(t) \vert \leq C,
    \end{equation}
    for all $t \in [0,T]$. Then using \eqref{SBound} 
    \begin{equation}\nonumber
        \vert \widetilde{\zeta}_1(t) \vert \leq  \epsilon^{-1}\kO(\epsilon^N) + T \kO(\epsilon^{N-1}) \leq \kO(\epsilon^{N-2}), \quad t \in [0,T],
    \end{equation}
    being uniformly bounded for any $N \geq 3$. This directly leads to the following result: 
\begin{proposition}\label{F4Res}
    The attracting slow manifold extends as an invariant graph $$\zeta_1=\xi^a_1(\theta,r_1,\epsilon_1),$$ of \eqref{bchart2} over $(\theta,r_1,\epsilon_1) \in \kC_{1*}$. Moreover, $\xi_1^a$ is asymptotic to the series \eqref{FRØ1} and \eqref{Type2,2} with $\psi = \psi^S$.
\end{proposition}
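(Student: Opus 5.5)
The proof will be assembled from the fixed-point argument carried out just before the statement; what is left is to show it really is a contraction and to identify the asymptotics. We work in the $(\tilde\zeta_1,\theta,r_1,\epsilon_1)$-coordinates of Proposition \ref{Coordinates2}, with $N\ge 3$ fixed. For each initial point $(\theta(0),r_1(0),\epsilon_1(0))\in\widetilde\Sigma_{in}$, the initial value $\tilde\zeta_1(0)=\kO(\epsilon^N)$ comes from \eqref{InCond}. The base trajectory of \eqref{øre} reaches $\kC_{1*}$ at time $T\le K\epsilon_1(0)^{-1}$ by Lemma \ref{TransitionTIme}. Since $r_1(0)=\delta$, this gives $T=\kO(\epsilon^{-1})$. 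We define the map $\mathcal T$ by the right-hand side of the variation of constants formula, acting on the space of continuous functions on $[0,T]$ with $\sup|\tilde\zeta_1|\le C$.

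\textbf{Step 1: $\mathcal T$ is a contraction on the ball.} Lemma \ref{lemma45} gives $\norm{\kM(t)\kM(s)^{-1}}\le c\epsilon^{-1}\delta^2$. Hence the estimate already displayed gives $|\mathcal T\tilde\zeta_1|\le\kO(\epsilon^{N-2})\le C$. For the Lipschitz constant we use that $R^N$ is analytic, hence Lipschitz in $\tilde\zeta_1$ on the ball. This gives
\begin{equation*}
\mathrm{Lip}(\mathcal T)\le T\cdot c\epsilon^{-1}\cdot\epsilon^N\kO(1)=\kO(\epsilon^{N-2}),
\end{equation*}
which is less than $1$ for $N\ge3$ and $\epsilon$ small. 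So a unique solution exists on $[0,T]$. Evaluating it at time $T$ defines $\tilde\zeta_1$ on $\kC_{1*}$.

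\textbf{Step 2: the result is an analytic invariant graph.} The initial data depend analytically on the initial point and the flow is analytic. Points of $\kC_{1*}$ with $r_1>0$ correspond bijectively, via the backward flow, to points of $\widetilde\Sigma_{in}$, as argued before Lemma \ref{TransitionTIme}. Hence the union of these orbits is an invariant graph $\zeta_1=W_1^N+(r_1^2\epsilon_1)^N\tilde\zeta_1$ over $\kC_{1*}$, and it coincides with the continuation of $\xi^a$ from Proposition \ref{E4}. We call it $\xi^a_1$. On the piece of $\kC_{1*}$ with $r_1=0$, i.e. $\theta=\theta^C$, we define $\xi^a_1$ by continuity, using the uniformity of the bounds in $\epsilon_1(0)$.

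\textbf{Step 3: asymptotics.} The estimate shows
\begin{equation*}
\xi^a_1-W_1^N=(r_1^2\epsilon_1)^N\kO(\epsilon^{N-2})=\kO\big((r_1^2\epsilon_1)^{N}\big)
\end{equation*}
uniformly on $\kC_{1*}$. We have $\theta^C\in S^S$ by the choice of $\chi$, and $W_1^N$ contains both truncations, \eqref{FRØ1} to order $\epsilon_1^{2N-1}$ and \eqref{Type2,2} (with $\psi=\psi^S$) to order $r_1^{4N-1}$, glued via Lemma \ref{Doublesseriescof}. Because $N$ is arbitrary and the constructions for different $N$ agree by uniqueness of the continuation, $\xi^a_1$ is asymptotic to both series.

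\textbf{Main obstacle.} The delicate point is the loss of the factor $\epsilon^{-1}$ in Lemma \ref{lemma45}, combined with $T=\kO(\epsilon^{-1})$. Together these cost two powers of $\epsilon$. The bounds must also be uniform near $\theta^{S,E}$, where the transition is slow. This is handled by choosing $N\ge3$ and, if necessary, working with $N+2$ in the truncation. One must also verify that $\mathrm{Re}\,P\le0$ all the way up to $\kC_{1*}$, including the second diagonal entry near $r_1=0$. This follows from \ref{B2}--\ref{B3}, since the region swept out lies on the $\mu$-negative side of $\kC_*$.
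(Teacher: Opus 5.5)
Your proposal is correct and follows the paper's argument. In the coordinates of Proposition \ref{Coordinates2}, you combine the variation-of-constants formula with Lemma \ref{TransitionTIme} ($T=\mathcal O(\epsilon^{-1})$) and Lemma \ref{lemma45} (the loss of a factor $\epsilon^{-1}$) to get $\tilde\zeta_1=\mathcal O(\epsilon^{N-2})$ uniformly for $N\ge 3$, and the arbitrariness of $N$ then gives both asymptotic expansions. Your contraction-mapping formulation, the extension to $r_1=0$ by continuity, and the explicit check of the two expansions only make explicit what the paper compresses into an \emph{a priori} (bootstrap) bound.
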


 \subsubsection{The splitting near $\mu_*$}

As a result of Proposition \ref{F4Res}, we have obtained the attracting slow manifold along $\kC_{1*}$ as an invariant graph $\zeta_1=\xi_1^a(\theta,r_1,\epsilon_1)$ asymptotic to the series \eqref{Type2,2} defined for $\theta\in S^S$, $r_1,\epsilon_1\in [0,\delta]$. Analogously, we have an extension (by using the backward flow) of the repelling slow manifold along $\kC_{1*}$ as an invariant graph $\zeta_1=\xi_1^r(\theta,r_1,\epsilon_1)$ asymptotic to the series \eqref{Type2,2} defined for $\theta\in S^U$. By construction $$\theta^C\in S^S \cap S^U.$$

We now fix $\epsilon_1=\delta>0$ small enough. Then $r_1 = \sqrt{\epsilon \delta^{-1}}$ by the conservation of $\epsilon$, see \eqref{chart2}. In particular, $\kC_{1*}\cap \{\epsilon_1=\delta\}$ is a smooth curve parametrized by $\sqrt{\epsilon}$:  $$\theta=\hat \theta(\sqrt{\epsilon}), \quad r_1=\sqrt{\epsilon\delta^{-1}}, \quad \epsilon_1=\delta,$$ where $\hat \theta$ denotes a locally defined smooth function with $\hat \theta(0)=\theta^C$. 
Blowing back down using \eqref{chart2}, we find that $\kC_{1*}\cap \{\epsilon_1=\delta\}$ corresponds to 
\begin{align}
\mu = \mu_* + \sqrt{\epsilon} \hat \mu(\sqrt{\epsilon}),\quad \hat \mu(0) = \frac{1}{\sqrt{\delta}}\e^{i\theta^C}.\eqlab{mufinal}
\end{align}
with $$\hat \mu(\sqrt{\epsilon}):=\sqrt{\delta^{-1}}\e^{i\hat \theta(\sqrt{\epsilon})},$$ being a smooth complex-valued function. 
\begin{proposition}\label{finalprop} Fix $\delta>0$ small enough. Then the attracting and repelling slow manifolds $\xi^{a,r}=(z^{a,r},w^{a,r})$ extend to $$\mu = \mu_* + \sqrt{\epsilon} \hat \mu(\sqrt{\epsilon}),$$ for all $0<\epsilon\ll 1$. In particular, for this value of $\mu$ we have 
\begin{equation}\eqlab{Deltatau}
\begin{aligned}
    z^{a}(\mu,\epsilon)-z^r(\mu,\epsilon)& = \sqrt{\epsilon}\left(F_0\sqrt{\frac{2\pi }{\lambda_0 }} \exp\left(\frac12 \delta^{-1} \e^{2i\theta^C} \lambda_0\right)  + \mathcal O(\sqrt{\epsilon})\right),\\
    w^{a}(\mu,\epsilon)-w^r(\mu,\epsilon) &= \mathcal O(\epsilon),
\end{aligned}
\end{equation}
for all $0 < \epsilon\ll 1$.
\end{proposition}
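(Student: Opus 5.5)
The plan is to read off the splitting directly from Proposition \ref{F4Res} and its repelling analogue, evaluated on the curve $\kC_{1*}\cap\{\epsilon_1=\delta\}$. Fix $\delta>0$ and $\epsilon_1=\delta$, so that $r_1=\sqrt{\epsilon\delta^{-1}}$ and $\theta=\hat\theta(\sqrt\epsilon)\in S^S\cap S^U$. By Proposition \ref{F4Res}, the attracting manifold is a graph $\zeta_1=\xi^a_1(\theta,r_1,\epsilon_1)$ asymptotic to the type II series \eqref{Type2,2} with $\psi=\psi^S$. The repelling manifold, obtained by the same construction with the backward flow, is asymptotic to \eqref{Type2,2} with $\psi=\psi^U$. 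The only new point is that this asymptotic statement holds uniformly in $r_1$ at fixed $\epsilon_1=\delta$. To see this, use the coordinates of Proposition \ref{Coordinates2}: there $\xi^{a,r}_1=W_1^{N}+(r_1^2\epsilon_1)^N\tilde\zeta_1$ with $\tilde\zeta_1$ bounded, and at $\epsilon_1=\delta$ the remainder is $\kO(\epsilon^N)$, hence $\kO(r_1^{2N})$.

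Next I would expand $W_1^N$ at $\epsilon_1=\delta$ to first order in $r_1$. The type I parts and the subtracted jets in \eqref{DoubleSeries} coincide for the $\psi^S$ and $\psi^U$ versions, since they do not depend on $\psi$ (Lemma \ref{Doublesseriescof}). They therefore cancel in the difference, and
\begin{equation*}
\xi^a_1-\xi^r_1=\e^{\i\theta}\bigl(m^{\psi^S}_{02}-m^{\psi^U}_{02}\bigr)(\delta\e^{-2\i\theta})+\kO(r_1).
\end{equation*}
The $\kO(r_1)$ bound holds because each $m^\psi_{n2}$ is bounded on the compact set $\{\delta\e^{-2\i\theta}:\theta\text{ near }\theta^C\}$. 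This set lies in $S(\psi^S,\chi,\delta')\cap S(\psi^U,\chi,\delta')$ once $\delta'>\delta$ and $\chi$ is chosen as at the end of Section \ref{asymp1}. I expect the uniformity of the Gevrey sums there to be the main technical point. It is handled by shrinking $\delta$ relative to the sector radius from Lemma \ref{T2}.

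Lemma \ref{SN1} then gives the leading term explicitly. With $\epsilon_2=\delta\e^{-2\i\theta}$,
\begin{equation*}
z_1^a-z_1^r=\e^{\i\theta}F_0\sqrt{\tfrac{2\pi\delta}{\lambda_0}}\,\e^{-\i\theta}\exp\bigl(\tfrac12\delta^{-1}\e^{2\i\theta}\lambda_0\bigr)+\kO(r_1),\qquad w_1^a-w_1^r=0+\kO(r_1).
\end{equation*}
The factor $\e^{-\i\theta}$ comes from $\sqrt{\epsilon_2}$ and cancels the prefactor $\e^{\i\theta}$. Since $\theta=\hat\theta(\sqrt\epsilon)=\theta^C+\kO(\sqrt\epsilon)$ and $r_1=\kO(\sqrt\epsilon)$, the exponential may be evaluated at $\theta^C$ with an $\kO(\sqrt\epsilon)$ error.

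Finally I would blow down with $z=r_1z_1$, $w=r_1w_1$ and $r_1=\sqrt{\epsilon}\,\delta^{-1/2}$. Multiplying by $r_1$, the $\sqrt\delta$ in the leading term cancels against $\delta^{-1/2}$, which gives exactly the $z$-formula in \eqref{Deltatau}.

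For the $w$-component, a bare $\kO(r_1)$ bound on $w_1^a-w_1^r$ only gives $\kO(\epsilon)$ after multiplying by $r_1$, and that is what we want. To confirm it, note from the recursion in the proof of Lemma \ref{T2} that $w_0\equiv0$ and $w_1=-\epsilon G_0/\nu_0$ is independent of $\psi$. Hence $w^{\psi^S}_{12}-w^{\psi^U}_{12}=0$, and the $w$-difference is in fact $\kO(r_1^2)$ in chart coordinates, giving $\kO(\epsilon^{3/2})$. Either way the second line of \eqref{Deltatau} follows.
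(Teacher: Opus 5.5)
Your proposal is correct and follows the paper's proof. You evaluate the graphs from Proposition \ref{F4Res} and its repelling analogue on $\kC_{1*}\cap\{\epsilon_1=\delta\}$ and reduce the difference to $\e^{\i\theta}(m_{02}^{\psi^S}-m_{02}^{\psi^U})(\delta\e^{-2\i\theta})+\kO(r_1)$; you then blow down with $r_1=\sqrt{\epsilon\delta^{-1}}$ and apply Lemma \ref{SN1} at $\epsilon_2=\delta\e^{-2\i\theta^C}$. Your extra details are correct refinements of steps the paper leaves implicit: the explicit cancellation of the $\psi$-independent type I parts and jets in $W_1^N$, the sector-radius bookkeeping, and the sharper $\kO(\epsilon^{3/2})$ bound for the $w$-difference via $w^{\psi}_{12}(\epsilon_2)=-\epsilon_2 G(0,0,\mu_*,0)/\nu(\mu_*)$.
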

\begin{proof}
For $(\theta,r_1,\epsilon_1)\in  \kC_{1*}$, we have now established that 
\begin{align*}
    \xi^{a}_1(\theta,r_1,\epsilon_1) = \e^{i\theta} m_0^{\psi^S}(\epsilon_1 \e^{-2i\theta}) + \mathcal O(r_1),
\end{align*}
and similarly
\begin{align*}
    \xi^{r}_1(\theta,r_1,\epsilon_1) = \e^{i\theta} m_0^{\psi^U}(\epsilon_1 \e^{-2i\theta}) + \mathcal O(r_1).
\end{align*}
But then by using \eqref{chart2} (blowing down), we conclude that 
\begin{align*}
    \xi^{a}(\mu,\epsilon) - \xi^r(\mu,\epsilon) &= r_1 (\xi_1^{a}(\theta,r_1,\epsilon_1) - \xi_1^{r}(\theta,r_1,\epsilon_1) ) \\
    &=r_1 \e^{i\theta} \left(m_0^{\psi^S}(\epsilon_1 \e^{-2i\theta})-m_0^{\psi^U}(\epsilon_1\e^{-2i\theta}) + \mathcal O(r_1)\right).
\end{align*}
The result then follows from Lemma \ref{SN1} with
\begin{align*}
    \epsilon_2= \delta e^{-2i\theta^C} = \hat \mu(0)^{-2}.
\end{align*}
\end{proof}
\section{Computing the difference }\label{secdiff}
Let $\widehat{\kC}_*(\epsilon)\subset \kC_*$ denote the closed subset of $\kC_*$ from $\mu=\overline{\mu_*}+\sqrt{\epsilon}\overline{\hat \mu}(\sqrt{\epsilon})$ to $\mu={\mu_*}+\sqrt{\epsilon}{\hat \mu}(\sqrt{\epsilon})$. Notice that $\widehat{\kC}_*(\epsilon)\to \kC_*$ as $\epsilon \to 0$ in the Hausdorff distance.
Then from the results of the previous section, we have slow manifolds of \eqref{ComplexI} as graphs over $\widehat{\kC}_*$ 
\begin{equation}\nonumber
    (z,w) = \xi^{a,r}(\mu,\epsilon), \quad \mu \in \widehat{\kC}_{*}.
\end{equation}
We consider the difference
\begin{equation}\nonumber
    \Delta = \xi^a(\mu,\epsilon)-\xi^r(\mu,\epsilon), \quad \Delta = (\Delta_z,\Delta_w).
\end{equation}
Seeing that $\xi^{a,r}=(z^{a,r},w^{a,r})$ are both invariant manifold solutions, we directly obtain the following equation for $\Delta$ 
\begin{align}\nonumber
    & \epsilon\frac{d\Delta}{d\mu} = 
    \begin{pmatrix}
        \lambda(\mu)&0\\
        0 & \nu(\mu)
    \end{pmatrix}
    \Delta + \epsilon
    \begin{pmatrix}
        F(z^a,w^a,\mu,\epsilon)-F(z^r,w^r,\mu,\epsilon)\\
        G(z^a,w^a,\mu,\epsilon)-G(z^r,w^r,\mu,\epsilon)
    \end{pmatrix}.
\end{align}
Moreover, as $F$ and $G$ are $C^\omega$, we find
    \begin{equation}\nonumber
         \begin{pmatrix}
         F(z^a,w^a,\mu,\epsilon)-F(z^r,w^r,\mu,\epsilon)\\
         G(z^a,w^a,\mu,\epsilon)-G(z^r,w^r,\mu,\epsilon)
     \end{pmatrix} =\begin{pmatrix}
         \partial_zF(z_b,w_b,\mu,\epsilon)&\partial_wF(z_b,w_b,\mu,\epsilon)\\
         \partial_zG(z_b,w_b,\mu,\epsilon) & \partial_wG(z_b,w_b,\mu,\epsilon)
     \end{pmatrix}
    \Delta,
    \end{equation}
by an application of the mean value theorem. Here $(z_b,w_b)=(z_b,w_b)(\mu,\epsilon)$ denote the intermediary points. This finally leads to the linear equation for $\Delta$ 
\begin{align}\eqlab{Delta1}
    & \epsilon\frac{d\Delta}{d\mu} = 
   \begin{pmatrix}
        \lambda(\mu)&0\\
        0 & \nu(\mu)
    \end{pmatrix}
    \Delta + \epsilon
    \begin{pmatrix}
        \partial_zF(z_b,w_b,\mu,\epsilon)&\partial_wF(z_b,w_b,\mu,\epsilon)\\
        \partial_zG(z_b,w_b,\mu,\epsilon) & \partial_wG(z_b,w_b,\mu,\epsilon)
    \end{pmatrix}
    \Delta.
\end{align}
To study this equation, we let $g:[-1,1]\rightarrow \kC_*$ be a $C^\infty$-smooth and regular parametrization of ${\kC}_*$ such that
\begin{equation}\eqlab{CSYM}
    g(0)=0, \quad g(1)=\mu_*, \quad g(-\tau) = \overline{g(\tau)}, 
\end{equation}
for all $\tau \in [0,1]$. Here we have used the assumption \ref{B1}. We then let $\tau_*(\sqrt{\epsilon})$, $\tau_*(0)=1$, be such that $g(\tau_*)=\mu_*+\sqrt{\epsilon}\hat \mu(\sqrt{\epsilon})$. The existence and smoothness of $\tau_*(\sqrt{\epsilon})$ follows from the regularity of $g$. 
Now, the parametrization $g$ brings \eqref{Delta1} into the form 
\begin{equation}\eqlab{Delta2}
   \small{ \epsilon\frac{d\Delta}{d\tau} =  g'(\tau)  \left( 
    \begin{pmatrix}
        \lambda(g(\tau))+\epsilon\partial_zF(0,0,g(\tau),0)&\epsilon\partial_wF(0,0,g(\tau),0)\\
        \epsilon\partial_zG(0,0,g(\tau),0) & \nu(g(\tau))+\epsilon\partial_wG(0,0,g(\tau),0) 
    \end{pmatrix} + \mathcal{O}(\epsilon)
    \right) \Delta,}
\end{equation}
for any $$\tau\in [-\tau_*(\sqrt{\epsilon}),\tau_*(\sqrt{\epsilon})].$$
In deriving \eqref{Delta2}, we have also expanded the right hand side using that $z_b=w_b=0$ for $\epsilon=0$. In the remainder of this section, we write $\Delta(\tau)$ (instead of $\Delta(g(\tau),\epsilon)$) suppressing the explicit dependency on $\epsilon$.

\begin{proposition}\label{DiagonalCoordinates}
    Fix any $n \in \N$. For $0\ll\epsilon<1$ sufficiently small, there exists a coordinate transformation of the form $(\Delta_z,\Delta_w,\tau) \mapsto (a,b,\tau)$, defined by 
\begin{equation}  \label{Diagonalisation1}
    \begin{pmatrix}
        \Delta_z\\
        \Delta_w
    \end{pmatrix}
    =
    \begin{pmatrix}
       1 & \epsilon Y(\tau,\epsilon)\\
         \epsilon \overline Y(-\tau,\epsilon)& 1
    \end{pmatrix}
    \begin{pmatrix}
        a\\
        b
    \end{pmatrix}, \quad \tau \in [-\tau_*(\sqrt{\epsilon}),\tau_*(\sqrt{\epsilon})],
\end{equation}
with $Y$ being $C^n$-smooth, 
that brings \eqref{Delta2} into the following diagonal form 
\begin{equation} \eqlab{DiagonalSystemFinal}
    \begin{aligned}
    & \epsilon \frac{da}{d\tau} = g'(\tau)(\lambda(g(\tau))+\epsilon\partial_zF(0,0,g(\tau),0)+\kO(\epsilon^2))a,\\
    & \epsilon \frac{db}{d\tau} = g'(\tau)(\nu(g(\tau))+\epsilon\partial_wG(0,0,g(\tau),0)+\kO(\epsilon^2))b.
\end{aligned}
\end{equation}
This system is symmetric (reversible) with respect to $(a,b,\tau) \mapsto (\overline b, \overline a, - \tau)$.
\end{proposition}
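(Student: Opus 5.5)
Write the coefficient matrix of \eqref{Delta2} as $g'(\tau)\bigl(\operatorname{diag}(\lambda,\nu)+\epsilon B(\tau,\epsilon)\bigr)$, where $B$ collects the derivative terms and the $\mathcal O(\epsilon)$ remainder. Substituting \eqref{Diagonalisation1} with off-diagonal entries $\epsilon Y$ (top right) and $\epsilon X$ (bottom left), and requiring the transformed system to be diagonal, gives a Riccati-type system for $(X,Y)$. It is convenient to work in the projective variables of the introduction: the invariant lines of the linear system are graphs $\Delta_z=\epsilon Y\Delta_w$ and $\Delta_w=\epsilon X\Delta_z$. Writing the invariance condition for the first line gives
\begin{equation*}
\epsilon^2\frac{dY}{d\tau} = g'(\tau)\Bigl[\bigl(\lambda(g(\tau))-\nu(g(\tau))\bigr)\epsilon Y + \epsilon B_{12} + \epsilon^2\bigl(B_{11}-B_{22}\bigr)Y - \epsilon^3 B_{21}Y^2\Bigr],
\end{equation*}
that is, after division by $\epsilon$,
\[
\epsilon\dot Y = g'(\lambda-\nu)Y + g' B_{12} + \mathcal O(\epsilon)Y + \mathcal O(\epsilon^2)Y^2 ,
\]
and analogously for $X$ with $\nu-\lambda$. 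This is a slow-fast system in $(Y,\tau)$ with slow time $\tau$. The critical manifold $Y=-B_{12}/(\lambda-\nu)$ is normally hyperbolic provided $\real[g'(\tau)(\lambda(g(\tau))-\nu(g(\tau)))]\neq 0$ along $\kC_*$. I expect this to be the content of \ref{B2}. Along $\kC_*$ the real part of $-\i\overline{\lambda}\nu$ vanishes, so the flow direction of \eqref{THIS} is related to the tangent $g'$, and transversality of the elliptic flow to $\kC_*$ (no contact points) should translate into $\real[g'(\lambda-\nu)]\neq0$ on the interior. This is presumably equivalent to $\lambda\ne\nu$, in the spirit of Neishtadt's condition.

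The main obstacle is the endpoints $\tau=\pm1$, where $\lambda(\mu_*)=0$ but $\nu(\mu_*)\neq0$, so that $\lambda-\nu\to-\nu(\mu_*)\neq 0$. Hyperbolicity therefore persists up to and including $\mu_*$ and $\overline{\mu_*}$, since the difference $\lambda-\nu$ does not vanish there. I would check this carefully, first by establishing $\lambda-\nu\ne0$ on the compact curve $\kC_*$ including its boundary. Granted this, Fenichel theory \cite{fen3,jones_1995} applied on the compact interval $[-1,1]$, extended slightly beyond by cutoff so that the domain has no boundary issues, yields a $C^n$ slow manifold $Y=Y(\tau,\epsilon)$ for every fixed $n$ and $0<\epsilon$ small. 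Its restriction to $[-\tau_*,\tau_*]\subset[-1,1]$ is what we need. The same argument gives $X$.

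For the symmetry, note that \eqref{CSYM} and \eqref{symFL} imply $\overline{\lambda(g(-\tau))}=\nu(g(\tau))$ and $\overline{g'(-\tau)}=-g'(\tau)$, and similarly for the $F,G$ derivative terms. The symmetry of the intermediate points $(z_b,w_b)$ follows from Lemma \ref{lemmasym} applied to $\xi^{a,r}$, given that the $\mathcal O(\epsilon)$ remainder inherits the symmetry. Hence $\tau\mapsto\epsilon\overline{Y(-\tau,\epsilon)}$ satisfies the $X$-equation. Fenichel slow manifolds are not unique, so I would fix $Y$ through a symmetric cutoff extension, or simply define $X(\tau):=\overline Y(-\tau)$ and verify directly that it is invariant. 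This gives the form \eqref{Diagonalisation1}.

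Finally, substituting the transformation back in, the diagonal entries become $g'(\lambda+\epsilon B_{11}+\epsilon^2B_{12}X)$ and $g'(\nu+\epsilon B_{22}+\epsilon^2 B_{21}Y)$. Since $B_{11}=\partial_zF(0,0,g,0)+\mathcal O(\epsilon)$, this yields \eqref{DiagonalSystemFinal} with $\mathcal O(\epsilon^2)$ remainders. The reversibility under $(a,b,\tau)\mapsto(\overline b,\overline a,-\tau)$ then follows from the conjugation identities above.
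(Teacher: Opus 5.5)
Your architecture matches the paper's: an invariance (Riccati/projective) equation for the off-diagonal entry, Fenichel theory to obtain $Y$, reversibility to get the second column as $\epsilon\overline{Y(-\tau,\epsilon)}$, and reading off the diagonal entries. The gap is in the one step the proposition rests on, namely normal hyperbolicity: $\real[g'(\tau)(\lambda(g(\tau))-\nu(g(\tau)))]\neq0$ for all $\tau\in[-1,1]$, endpoints included. You only say you ``expect'' this to follow from \ref{B2}, and both concrete justifications you give are wrong.

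\emph{Interior.} The condition is not equivalent to $\lambda\neq\nu$. On $\kC$ one has $\imag[\overline{\lambda}\nu]=0$, so $\lambda=c_3\nu$ with $c_3$ real. Along $\kC_*$, $c_3(0)=-1$ (Hopf) and $c_3=0$ only at $\mu_*$, so $c_3<0$ in the interior. Hence $\lambda\neq\nu$ holds automatically on $\kC_*$, even at contact points where hyperbolicity does fail. An example is the linear family of Proposition \ref{mainprop} with $0<\real(\lambda_0)<-\sqrt{3}\,\imag(\lambda_0)$.

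\emph{Endpoints.} At $\tau=1$, the fact that $\lambda-\nu=-\nu(\mu_*)\neq0$ says nothing about $\real[g'(1)\nu(\mu_*)]$. This quantity vanishes exactly when $\kC_*$ is tangent at $\mu_*$ to the stable or unstable manifold of the elliptic saddle; this is the borderline case $\real(\lambda_0)=-\sqrt{3}\,\imag(\lambda_0)$ of Proposition \ref{mainprop}. So what is needed at the endpoints is the non-tangency clause of \ref{B2}, not $\nu(\mu_*)\neq0$.

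The missing idea is to measure the tangent of $\kC_*$ against the elliptic flow direction $-\i\overline{\lambda}$. Write $g'=(c_1+\i c_2)\,\overline{\lambda}/|\lambda|$ with $c_1,c_2$ real.
\begin{itemize}
\item $c_1\neq0$ is exactly transversality of the elliptic flow to $\kC_*$, i.e.\ no contact points.
\item $c_1$ extends smoothly to $\tau=1$ because the root is simple, and $c_1(1)\neq0$ by the non-tangency assumption.
\item Hence $c_1$ has a fixed nonzero sign on $[0,1]$. The paper takes it positive by looking at $\tau=0$, where $\lambda(0)$ and $g'(0)$ are purely imaginary.
\end{itemize}
Combining this with $\lambda=c_3\nu$, $c_3\le0$, gives $\real[g'\lambda]=c_1|\lambda|$ and $\real[g'\nu]=-c_1|\nu|$. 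Therefore $\real[g'(\lambda-\nu)]=c_1|\nu|(1-c_3)$, which is nonzero on $[0,1]$ since $c_1\neq0$ and $1-c_3\ge1$. On $[-1,0]$ the same holds, because \eqref{CSYM} and \eqref{symFL} give $g'(-\tau)(\lambda-\nu)(g(-\tau))=\overline{g'(\tau)(\lambda-\nu)(g(\tau))}$, so the real part is even in $\tau$. Once this is established, the rest of your argument goes through as in the paper: Fenichel's theorem, the symmetric choice $X(\tau)=\overline{Y(-\tau)}$, and the diagonal entries $g'(\lambda+\epsilon B_{11}+\epsilon^2B_{12}X)$ and $g'(\nu+\epsilon B_{22}+\epsilon^2B_{21}Y)$.
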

\begin{remark}
    In further details, $Y$ is smooth on $I\times [0,\epsilon_0)$, with $I\subset (-1,1)$ compact and $0<\epsilon_0\ll 1$. Near $(\tau,\epsilon) =(\pm \tau_*,0)$, the smoothness of $Y$ should be understood with respect to the blowup coordinates associated with the following (real) blowups of $(\tau,\epsilon)=\pm 1$ given by 
    \begin{align}\eqlab{tauepsbu}
        \begin{cases}
            \tau = \pm1 + \sigma \breve \tau,\\
            \epsilon = \sigma^2 \breve \epsilon,
        \end{cases}\quad \sigma\ge 0,\,(\breve \tau,\breve\epsilon)\in \mathbb S^1\subset \mathbb R^2,
    \end{align}
    respectively.
    This is due to the fact that the smoothness of $(z^{a,r},w^{a,r})(\mu,\epsilon)$ (and therefore also $(z_b,w_b)(\mu,\epsilon)$) near $(\mu,\epsilon)=(\mu_*,0)$ and $(\mu,\epsilon)=(\overline{\mu_*},0)$ is given with respect to the blowup coordinates associated with the blowup of $(\mu,\epsilon)=(\mu_*,0)$ and $(\mu,\epsilon)=(\overline \mu_*,0)$, respectively. In line with \cite{hayes2015a}, there is a natural transition from smoothness with respect to $\epsilon$ to smoothness with respect to $\sqrt{\epsilon}$ and the blowup coordinates capture this transition. For simplicity, we will not expand on this in the present manuscript. (Importantly, the existence of $Y$ can be obtained without this.) Instead, we refer the interested reader to \cite[Proposition 5.2]{kristiansenzerophopf2} for a similar diagonalization where the smoothness in charts is more detailed. 
    
    Finally, we note that in the proof of Proposition \ref{DiagonalCoordinates}, we have preferred to work on an $\epsilon$-dependent domain. However, this is not essential. Indeed, in order to work on a fixed domain, one could just rescale $\tau$ (or simply compactify using the blowup \eqref{tauepsbu}).
\end{remark}
\begin{proof}

   We consider the projective coordinate 
    \begin{equation}\nonumber
        \widetilde{\Delta} = \frac{\Delta_z}{\Delta_w}.
    \end{equation}
   Using the system \eqref{Delta2}, we obtain the following slow-fast system for $(\widetilde{\Delta},\tau)$
\begin{equation}\eqlab{projectivesystem}
    \begin{aligned}
        & \widetilde{\Delta}' = \kO(\epsilon)+(g'(\tau)[\lambda(g(\tau))-\nu(g(\tau))]+\mathcal{O}(\epsilon))\widetilde{\Delta}+\kO(\epsilon)\widetilde{\Delta}^2,\\
    & \tau ' = \epsilon,
    \end{aligned}
\end{equation}
with the following layer problem
\begin{equation}\eqlab{layerproblem}
    \begin{aligned}
        & \widetilde{\Delta}' = g'(\tau)[\lambda(g(\tau))-\nu(g(\tau))]\widetilde{\Delta},\\
    & \tau ' = 0.
    \end{aligned}
\end{equation}
for $\tau\in [-1,1]$.
Hence, $\widetilde{\Delta}=0$ defines a critical manifold with normal hyperbolicity described by the quantity
\begin{equation}\eqlab{Eigenvalue1}
    \real\left[g'(\tau)[\lambda(g(\tau))-\nu(g(\tau))]\right].
\end{equation}
Due to the symmetry \eqref{CSYM}, it suffices to consider $\tau\in [0,1]$.
We now show that \eqref{Eigenvalue1} is non-zero (positive) on $[0,1]$. Since $\lambda(\mu)$ only vanishes at $\mu_*$, we can write
\begin{equation}\eqlab{Tangent}
    g'(\tau) = c_1(\tau) \frac{\overline{ \lambda(g(\tau))}}{\vert \lambda(g(\tau)) \vert }+c_2(\tau)i\frac{\overline{\lambda(g(\tau))}}{\vert \lambda(g(\tau)) \vert},\quad \tau\in [0,1).
\end{equation}
In particular, as the root at $\mu_*$ is simple, the real-valued functions $c_1$ and $c_2$ can be $C^\infty$-smoothly extended to $\tau\in [0,1]$. Moreover, since there are no contact points (recall assumption \ref{B2}), $c_1(\tau) \neq 0$ for all $\tau \in [0,1]$. Finally, as $\lambda(0)$ and $g'(0)$ are both purely imaginary (the former being due to the Hopf while the latter follows from the fact that $\overline{\kC_*}=\kC_*$), we conclude that $c_1(\tau)>0$ for all $\tau \in [0,1]$ (and that $c_2(0)=0$). 
 
Using the expression of $g'(r)$ given in \eqref{Tangent}, suppressing for simplicity the explicit dependence on $g(\tau)$ on the right hand side, \eqref{Eigenvalue1} becomes
\begin{align}
    &\real\left[g'(\tau)[\lambda(g(\tau))-\nu(g(\tau))]\right] = c_1 \vert \lambda \vert^{-1}(\vert \lambda \vert^2 - \real(\overline \lambda \nu)). \eqlab{Eigenvalue2}
\end{align}
Now, recall that for points $g(\tau) \in \kC_*\subset \kC$, we have that 
\begin{equation}\eqlab{omg}
    \real [ -\i \overline \lambda(g(\tau)) \nu(g(\tau))] = 0.
\end{equation}
At the same time, $\nu(\mu)$ is non-zero for $\imag(\mu) \ge 0$ while $\lambda(\mu)=0$ only at $\mu = \mu_*$, recall assumption \ref{A1} and \eqref{symFL}, further implying that
\begin{equation}\eqlab{Prop1}
    \lambda(g(\tau)) = c_3(\tau) \nu(g(\tau)),
\end{equation}
with $c_3(\tau)$ a $C^\infty$ real-valued function, satisfying $c_3(1)=0$. Indeed, we can write $\lambda(g(\tau)) = c_3(\tau) \nu(g(\tau))+ic_4(\tau) \nu(g(\tau))$, which inserted into \eqref{omg} gives $c_4=0$. Similarly, at the origin $\tau =0$, we find
\begin{align}
    c_3(0) = -1,
\end{align}
using that $\lambda(0) = - \nu(0) \neq 0$.
We are led to conclude that the only root of $c_3(\tau)$ is at $\tau = 1$. In particular, $c_3(\tau)<0$ for $\tau \in [0,1)$. Finally, we combine \eqref{Prop1} with \eqref{Eigenvalue2} 
\begin{equation}\eqlab{Result1}
    \real\left[g'(\tau)[\lambda(g(\tau))-\nu(g(\tau))]\right] = c_1\vert c_3 \vert^{-1} \vert \nu \vert^{-1} (\vert c_3 \vert^2 \vert \nu \vert^2 - \real( c_3 \vert \nu \vert ^2)) = c_1\vert \nu \vert(1-c_3),
\end{equation}
using that $c_3(\tau) \leq 0$.
We have that  $\vert \nu(g(\tau))\vert > 0$, $c_1(\tau) > 0$, and $c_3(\tau) \leq 0$ for all $\tau \in [0,1]$, leading to the conclusion that the quantity \eqref{Eigenvalue1} is strictly positive for $\tau \in [0,1]$.
Fenichel's theory \cite{fen1,fen2,fen3,jones_1995} therefore guarantees the existence of a repelling slow manifold 
\begin{equation}
    \widetilde\Delta = \epsilon Y(\tau,\epsilon), \quad \tau \in [0,\tau_*(\sqrt{\epsilon})],
\end{equation}
for $0<\epsilon\ll 1 $.
 Looking at order by order in the invariance equation for (\theequation), we find that
\begin{equation}\nonumber
\begin{aligned}
    Y(\tau,0)&= -\frac{\partial_wF(0,0,g(\tau),0)}{g'(\tau)[\lambda(g(\tau))-\nu(g(\tau))]}.
\end{aligned}
\end{equation}
We conclude that the system \eqref{Delta2} has an invariant manifold of the form $\Delta_z = Y(\tau,\epsilon) \Delta_w$. Now, using the symmetry $(z,w,g(\tau)) \rightarrow (\overline w, \overline z, g(-\tau))$, see \eqref{CSYM}, we find a separate invariant manifold of \eqref{Delta2} 
\begin{equation}\nonumber
    \Delta_w = \overline{Y(-\tau,\epsilon)}\Delta_z.
\end{equation}
It then follows, that the change of coordinates
\begin{equation}  \label{DiagonalisationProof}
    \begin{pmatrix}
        \Delta_z\\
        \Delta_w
    \end{pmatrix}
    =
    \begin{pmatrix}
       1 & \epsilon Y(\tau,\epsilon)\\
         \epsilon \overline{Y(-\tau,\epsilon)}& 1
    \end{pmatrix}
    \begin{pmatrix}
        a\\
        b
    \end{pmatrix}, \quad \tau \in [0,\tau_*(\sqrt{\epsilon})],
\end{equation}
diagonalizes \eqref{projectivesystem} by construction. Indeed, the columns of (\ref{DiagonalisationProof}) are invariant manifold solutions. Finally, a routine calculation leads to the $(a,b,\tau)$ system in \eqref{DiagonalSystemFinal}. 
\end{proof}

We can now finalize the proof of Theorem \ref{TheoremSplitting} 
\begin{proof}[Proof of Theorem \ref{TheoremSplitting}]
    Using the coordinates of Proposition \ref{DiagonalCoordinates}, the system takes the form
\begin{align*}
    &\frac{da}{d\tau} =g'(\tau) \left(\frac{ \lambda(g(\tau))}{\epsilon}+\partial_zF(0,0,g(\tau),0)+\kO(\epsilon)\right)a,\\
    &\frac{db}{d\tau} = g'(\tau)\left(\frac{ \nu(g(\tau))}{\epsilon}+\partial_wG(0,0,g(\tau),0)+\kO(\epsilon)\right)b.
\end{align*}
Using the form of $g'(\tau)$ in \eqref{Tangent}, we find
\begin{equation}\eqlab{Integrands}
    \begin{aligned}
        &g'(\tau)\lambda(g(\tau)) = (c_1(\tau)+\i c_2(\tau))\vert \lambda (g(\tau)) \vert,\\
        &g'(\tau)\nu(g(\tau)) = \vert \nu(g(\tau)) \vert \frac{c_3(\tau)}{\vert c_3(\tau) \vert}(c_1(\tau)+i c_2(\tau))=-\vert \nu(g(\tau))\vert (c_1(\tau)+i c_2(\tau)).
    \end{aligned}
\end{equation}
We immediately see that the first equation of \eqref{Integrands} has positive real part, since $c_1(\tau)$ is positive on $\tau \in [0,1]$. Similarly, the second equation has negative real part on $\tau \in [0,1)$, due to the fact that $c_3(\tau) < 0$ for $\tau \in [0,1)$. Hence
\begin{align*}
    \real \left[\int_0^1 g'(\tau) \lambda(g(\tau)) d\tau \right]=\real \left[\int_0^{\mu_*} \lambda(\mu) d\mu \right]>0,
\end{align*}
upon using Cauchy's theorem. This proves \eqref{negativequantity}. 
We therefore focus on the expression for $a(\tau)$, $\tau \in [0,\tau_*]$. We solve the scalar differential equation for $a(\tau)$ via an integrating factor 
\begin{align}
        a(0) &= a(\tau_*)\exp\left({-\int_0^{\tau_*} \left[\frac{g'(\tau)\lambda(g(\tau))}{\epsilon}+g'(\tau)\partial_zF(0,0,g(\tau),0)+\kO(\epsilon)\right]d\tau}\nonumber \right)\\
    & = a(\tau_*)\exp\left({-\int_0^{\mu_*+\sqrt{\epsilon}\hat \mu} \left[\frac{\lambda(\mu)}{\epsilon}+\partial_zF(0,0,\mu,0)\right]d\mu}\right)(1+\kO(\epsilon)),\eqlab{a0}
\end{align}
using Cauchy's theorem in the second equality. Now, using \eqref{hatlambda}  and the smoothness of $\hat \mu(\sqrt{\epsilon})$, we obtain 
\begin{equation*}
\begin{aligned}
    \int_0^{\mu_*+\sqrt{\epsilon}\hat \mu}\lambda(\mu)d\mu & = \int_0^{\mu_*} \lambda(\mu) d \mu + \lambda_0\frac{1}{\delta}\e^{2\i \theta^C} \epsilon + \kO(\epsilon^{\frac{3}{2}}),
\end{aligned}
\end{equation*}
by expanding in powers of $\sqrt{\epsilon}$, recall also \eqref{mufinal}. Then a similar argument yields:
\begin{equation*}
    \int_0^{\mu_*+\sqrt{\epsilon}\hat \mu} \partial_z F(0,0,\mu,0) d\mu = \int_0^{\mu_*} \partial_z F(0,0,\mu,0) d\mu + \mathcal O(\sqrt{\epsilon}).
\end{equation*}
From the result of Proposition \ref{DiagonalCoordinates}, we know that 
\begin{align*}
    &a(\tau) = \frac{1}{1-\epsilon^2Y(\tau,\epsilon)\overline{Y(-\tau,\epsilon)}}(\Delta_z(\tau)-\epsilon Y(\tau,\epsilon)\Delta_w(\tau)) = \Delta_z(\tau) + \kO(\epsilon),
\end{align*}
in particular for $\tau = \tau_*$ 
\begin{equation}\nonumber
    a(\tau_*) = \sqrt{(2\pi \epsilon)}\frac{F_0}{\sqrt{\lambda_0}} \exp\left(\frac{1}{2}\delta^{-1}\e^{2\i\theta^C}\lambda_0\right)(1+\kO(\sqrt{\epsilon}))\,
\end{equation}
using the expression \eqref{Deltatau} for $\Delta_z(\tau_*)$ and the fact that $F_0\ne 0$. This brings \eqref{a0} into the following form:
\begin{equation*}
    a(0) =  \sqrt{(2\pi \epsilon)}\frac{F_0}{\sqrt{\lambda_0}}  \exp\left({-\int_0^{\mu_*} \left[\frac{\lambda(\mu)}{\epsilon}+\partial_zF(0,0,\mu,0)\right]d\mu}\right) (1+\kO(\sqrt{\epsilon})). 
\end{equation*}
We note that the leading order expression is independent of $\delta$ (as desired).
By the symmetry $(a,b,\tau) \mapsto (\overline b, \overline a, - \tau)$ of the \eqref{DiagonalSystemFinal}, we obtain the following 
\begin{align}
    b(0) &= \overline{a(0)}.
\end{align}
Finally, transforming back to the $\Delta$-coordinates yields the formula 
\begin{align}
    \Delta_z(0) &= a(0) + \epsilon Y(0,\epsilon) \overline{a(0)} \nonumber\\
    &=\sqrt{(2\pi \epsilon)}\frac{F_0}{\sqrt{\lambda_0}}  \exp\left({-\int_0^{\mu_*} \left[\frac{\lambda(\mu)}{\epsilon}+\partial_zF(0,0,\mu,0)\right]d\mu}\right) (1+\kO(\sqrt{\epsilon})), \eqlab{SIC}
\end{align}
and $\Delta_w(0) = \overline{\Delta_z(0)}$. 
To complete the proof, we use that $\Delta_x = \real(\Delta_z)$ and $\Delta_y = \imag(\Delta_z)$.
\end{proof}


\section{Proof of Proposition \ref{mainprop}}\label{proofprop}
In this section, we prove Proposition \ref{mainprop}. We therefore consider \eqref{ComplexI} with $\lambda$ and $\nu$ being affine functions of $\mu$, see \eqref{lambdanulin} repeated here for convenience
\begin{align}\eqlab{lambdanulin2}
    \begin{cases} \lambda(\mu) = \lambda_0\mu -\i,\\
     \nu(\mu) = \overline{\lambda_0}\mu +\i, 
     \end{cases}
\end{align}
with $\real(\lambda_0)>0.$  In this case, the associated elliptic system \eqref{THIS} takes the following form
\begin{equation}\eqlab{muElLin}
    \dot \mu = -\i\overline{\lambda(\mu)} =-\i(\overline{\lambda_0} \overline{\mu}+\i),
\end{equation}
with
\begin{align}
    \mu_*=\frac{\i}{\lambda_0}.\eqlab{mustarlin}
\end{align}
To prove Proposition \ref{mainprop}, we consider the assumptions successively. Firstly,
    \ref{A1} is trivial.
    For \ref{A2} and \ref{A3}, we use the conservation of $\mathcal H(\mu)$, see \eqref{Hmu}, to obtain the following expression for the invariant manifolds of $\mu_*$, recall Lemma \ref{Lemma21}, as level curves 
    \begin{align*}
        0 &= \real\left[\int_{\mu^*}^\mu \lambda(s)ds\right] = \real \left( \frac12 \lambda_0 (\mu^2-\mu_*^2)-i(\mu-\mu_*)\right).
    \end{align*}
     For $\mu\in \mathbb R$, a simple calculation, using \eqref{mustarlin} and $\real(\lambda_0)>0$, shows that this equation reduces to 
    \begin{align*}
        \mu^2 \vert \lambda_0\vert^2 =1.
    \end{align*}
    Hence 
    \begin{align*}
    \mu_s = -\mu_u=-\vert \lambda_0\vert^{-1}<0.
    \end{align*}
    Next, for \ref{B1} we compute 
    \begin{align}
       \real \left[-\i \overline{\lambda(\mu)}\nu(\mu)\right] =& \real\left[-\i(\overline{\lambda_0}^2\vert\mu\vert^2+2\i\overline{\lambda_0}(\mu+\overline{\mu})-1)\right],\nonumber\\ 
       =& -2\real(\lambda_0)\imag(\lambda_0)\vert\mu\vert^2+2\real(\lambda_0)\real(\mu)=0.\eqlab{kCeqn}
    \end{align}
    For $\imag(\lambda_0)\ne 0$, we conclude that 
    \begin{align*}
\kC = \left\{\mu\in \mathbb C \,:\, \real \left[-\i \overline{\lambda(\mu)}\nu(\mu)\right] =0\right\},
    \end{align*}
    is a circle centered at $$\mu=\mu_c:=1/(2\imag(\lambda_0)),$$ and radius $\vert \mu_c\vert>0$. For $\imag(\lambda_0)=0$, the circle degenerates to the imaginary axis. In this special case, it is easy to see that \ref{B1}-\ref{B3} are all satisfied (this case corresponds to \cite{hayes2015a}). Suppose that $\imag(\lambda_0)>0$. Then $\mu \to -\overline{\mu}$ and $t\mapsto -t$ transform \eqref{muElLin} into the same form with $\imag(\lambda_0)$ replaced by $-\imag(\lambda_0)<0$. This follows from a simple calculation. We therefore restrict attention to $\imag(\lambda_0)<0$ in the following. In this case, $\kC$ is contained within the left-half plane.
    Now, 
    evaluating $\real\left[-\i\overline{ \lambda(\mu)}\nu(\mu)\right]$ at $\mu=\mu_c$, we find by \eqref{kCeqn} that
    \begin{equation}\nonumber
        -\frac{\real(\lambda_0)}{2\imag(\lambda_0)}+\frac{\real(\lambda_0)}{\imag(\lambda_0)}=\frac{\real(\lambda_0)}{2\imag(\lambda_0)}<0.
    \end{equation}
    We conclude that $\operatorname{Re}\left[-i\overline{ \lambda(\mu)}\nu(\mu)\right]<0$ inside the circle. Finally, by \eqref{kCeqn} the map $$(a,b)\mapsto \real\left[-\i\overline{ \lambda(a+ib)}\nu(a+ib)\right],$$ is clearly regular. The closed arc from $\mu_*$ to $\overline{\mu_*}$ through $0$ is our $\kC_*$ and hence \ref{B3} holds true for all $\lambda_0$.

    We now turn to assumption \ref{B2}. Contact points on $\kC$ are given by zeros of the Lie-derivative of the left hand side of \eqref{kCeqn} with respect to $\mu$ in the direction of \eqref{muElLin}. This gives three contact points: $\mu=\mu_*$, $\mu=\mu_{ct}$ and $\mu=\overline{\mu_{ct}}$ where
    \begin{align*}
        \mu_{ct} := \frac{1}{4\imag(\lambda_0)}-i \frac{\sqrt{3}}{4\imag(\lambda_0)}.
    \end{align*}
    This follows from an elementary calculation.
    Recall here that we restrict attention to $\imag(\lambda_0)<0$.
We see that $\mu_{ct}=\mu_*$ (it suffices to look at the real part since both points lie on $\kC$) if and only if 
\begin{align}
        \real(\lambda_0) = -\sqrt{3}\imag(\lambda_0).\eqlab{cond0}
    \end{align}
Notice that this condition is equivalent with $\mu_s=\mu_c$ since (a) the radius from $\mu_c$ to $\mu_*$ defines a normal to $\kC$ at $\mu_*$ and (b) the stable and unstable manifolds are perpendicular at $\mu_*$. Hence $\kC_*$ is tangent to the unstable manifold at $\mu_*$ if and only if \eqref{cond0} holds. For $0<\real(\lambda_0)< -\sqrt{3}\imag(\lambda_0)$, we have $\real(\mu_*)<\real(\mu_{ct})$ as illustrated in Figure \ref{fig:second}. In this situation, the contact point  $\mu_{ct}$ (purple point in Figure \ref{fig:second}) is therefore on the interior of $\kC_*$. (In fact, it follows directly from Rolle's theorem that $\mu_{ct}$ lies on the arc between $\mu_*$ and the intersection point with the unstable manifold.) We conclude that \ref{B2} is violated for  any $0<\real(\lambda_0)\le  -\sqrt{3}\imag(\lambda_0)$. For $\real(\lambda_0)> -\sqrt{3}\imag(\lambda_0)$, we have $\real(\mu_*)>\real(\mu_{ct})$ as illustrated in Figure \ref{fig:first}. Hence there are no contact points on $\kC_*$ in this case and \ref{B2} is satisfied. This completes the proof. 
\begin{figure}[h]
    \centering
      \begin{subfigure}[b]{0.45\textwidth}
        \centering
        \includegraphics[width=\linewidth]{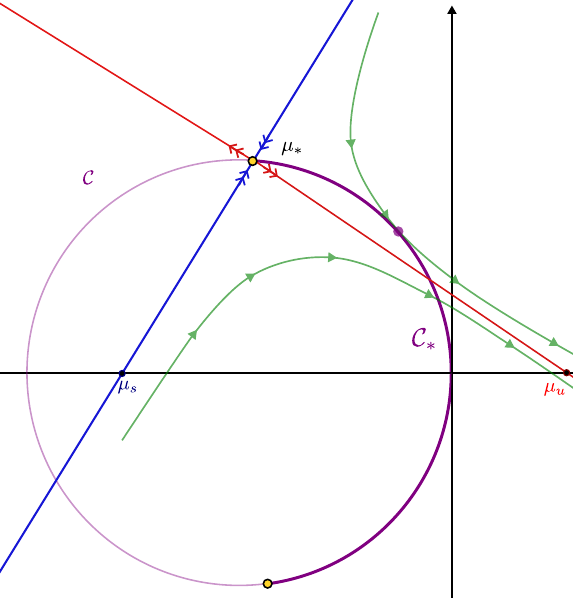}
        \caption{$0<{\real(\lambda_0)}<-\sqrt{3}{\imag(\lambda_0)}$}
       \label{fig:second}
    \end{subfigure}
        \hfill\begin{subfigure}[b]{0.475\textwidth}
        \centering
        \includegraphics[width=\linewidth]{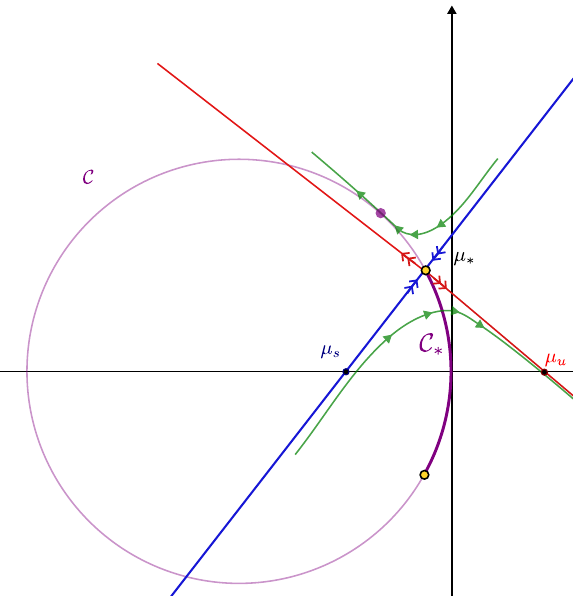}
        \caption{${\real(\lambda_0)}>-\sqrt{3}{\imag(\lambda_0)}$}
        \label{fig:first}
    \end{subfigure}
    \caption{The elliptic system and the contour $\kC$ for the linear system \eqref{muElLin} for $\imag (\lambda_0)<0$ and $\real( \lambda_0)>0$. In case (a), there is a contact point (purple disc) along $\kC_*$, whereas in case (b) there are no contact points along $\kC_*$. (Notice that $\mu_u=-\mu_s$ for \eqref{muElLin} but for illustrative purposes we have chosen to  ignore this in the present figure.) }
    \label{KSystemFig}
\end{figure}

\section{Proof of Corollary \ref{Delay}}\label{secdelay}
   In this section, we finally turn to the proof of Corollary \ref{Delay} and the expression for the maximal delay, see \eqref{Deq0}. For this, we consider the real system \eqref{ReS1} and write 
    \begin{align*}
        \Delta(\mu,\epsilon)=(\Delta_x,\Delta_y)(\mu,\epsilon) = \xi^a(\mu,\epsilon)-\xi^r(\mu,\epsilon),
    \end{align*}
    with $\xi^{a,r}=(x^{a,r},y^{a,r})$. Notice that $\xi^r(\mu,\epsilon)$ is well-defined for $\mu\in [0,\mu_u+\delta]$. Moreover, $\Delta(0,\epsilon)$ is known from Theorem \ref{TheoremSplitting}. A simple calculation, shows that $\Delta$ satisfies the following differential system 
    \begin{align}\nonumber
    & \epsilon\frac{d\Delta}{d\mu} = 
    \begin{pmatrix}
        \alpha(\mu)&\beta(\mu)\\
        -\beta(\mu) & \alpha(\mu)
    \end{pmatrix}
    \Delta + \epsilon
    \begin{pmatrix}
        R(x^r+\Delta_x,y^r+\Delta_y,\mu,\epsilon)-R(x^r,y^r,\mu,\epsilon)\\
        S(x^r+\Delta_x,y^r+\Delta_y,\mu,\epsilon)-S(x^r,y^r,\mu,\epsilon),
    \end{pmatrix}.
\end{align}
We write this equation in the following form
\begin{align}
    & \epsilon\frac{d\Delta}{d\mu} = 
    \left(\begin{pmatrix}
        \alpha(\mu)&\beta(\mu)\\
        -\beta(\mu) & \alpha(\mu)
    \end{pmatrix}
    + \epsilon
    T(\Delta,\mu,\epsilon) \right)\Delta,\eqlab{comeon}
\end{align}
by using the mean value theorem. Clearly, 
 $\vert\Delta(\mu,\epsilon)\vert$ is monotonically increasing for $\mu>0$ uniformly bounded away from $0$, since we are on the repelling side of the Hopf bifurcation (where $\alpha(\mu)>0$, recall assumption \ref{A1}) for all $0<\epsilon\ll 1$. The existence and uniqueness of $\mu_\delta(\epsilon)>0$ such that 
  \begin{align*}
      \Delta(\mu_\delta(\epsilon),\epsilon)=\delta,
  \end{align*}
  with $\delta>0$ fixed small enough and $0<\epsilon\ll 1$, therefore follows. In the following, we will prove the asymptotic expansion of $\mu_\delta(\epsilon)$ in \eqref{Deq0}. We henceforth write $\Delta(\mu,\epsilon)$ as $\Delta(\mu)$.

Now, define $\widetilde \Delta(\mu)$ by 
\begin{equation}\eqlab{TD1}
     \Delta(\mu) =\sqrt{\epsilon} \exp\left({\epsilon}^{-1}\left(\int_0^{\mu} \alpha(s)ds-\real\left[\int_0^{\mu_*} \lambda(s)ds\right] \right)\right)\Omega(\mu,\epsilon)
    \widetilde \Delta(\mu),
\end{equation}
with 
\begin{align*}
    \Omega(\mu,\epsilon) = \exp\left({\epsilon}^{-1} \int_0^{\mu} \begin{pmatrix}
        0&\beta(s)\\
        -\beta(s) & 0
    \end{pmatrix}ds\right).
\end{align*}
Notice from Theorem \ref{TheoremSplitting} that $\widetilde \Delta(0)=\mathcal O(1)$ and that $\Omega(\mu,\epsilon)\in O(2)$ is a rotation matrix for any $\mu\in \Sigma \cap \mathbb R$, $\epsilon>0$.
The equation \eqref{TD1} defines a change of coordinates that directly brings \eqref{comeon} into the following form  
\begin{equation}\eqlab{Ddeq}
    \widetilde \Delta'(\mu) =\Omega(\mu,\epsilon)^{-1} T(\Delta,\mu,\epsilon)\Omega(\mu,\epsilon) \widetilde \Delta(\mu).
\end{equation}
From here a straightforward application of Gronwall's inequality leads to a uniform estimate: $\widetilde \Delta(\mu)=\kO(1)$ (while $\vert \Delta(\mu)\vert$ remains bounded). From \eqref{Ddeq}, we then in turn also have that $\widetilde \Delta'(\mu)=\kO(1)$ under the same conditions. Here we have used that $\Omega(\mu,\epsilon)$ is a rotation matrix with $\norm{\Omega}=1$.

Now, given \eqref{TD1}, we can write $\vert \Delta(\mu) \vert = \delta$
as follows 
\begin{align*}
  \sqrt{\epsilon} \exp\left({\epsilon}^{-1}\left(\int_0^{\mu} \alpha(s)ds-\real\left[\int_0^{\mu_*} \lambda(s)ds\right] \right)\right)\vert \widetilde \Delta(\mu)\vert = \delta.
\end{align*}
%
Taking $\log$ on both sides and rearranging (using Cauchy's theorem and multiplication by $\epsilon$) lead to 
\begin{equation}\eqlab{Cd1}
    \mathcal H(\mu) + \epsilon\log(\sqrt{\epsilon})+ \epsilon \log \vert \widetilde \Delta(\mu)\vert-\epsilon \log \delta=0,
\end{equation}
where $$\mathcal H(\mu) = \real\left[\int_{\mu_*}^\mu \lambda(s) ds\right],$$
denotes the conserved quantity of the elliptic flow, 
recall \eqref{Hmu} and Lemma \ref{lemmaHam}. Since $\mathcal H(\mu_*)=0$, we have $\mathcal H(\mu)=0$ on the unstable manifold of the elliptic system. Recall that $\mu_u$ denotes the intersection of the unstable manifold of $\mu_*$ for the elliptic system with real axis, see assumptions \ref{A1}-\ref{A3}. Hence $\mu=\mu_u$ is a simple root of $\mathcal H$. The fact that the root is simple follows from 
\begin{align}\eqlab{Hprime}
\mathcal H'(\mu_u)=\alpha(\mu_u)>0,
\end{align}
using assumption \ref{A1}. Hence, we obtain 
\begin{align}\eqlab{mudelta0}\mu_\delta(\epsilon)=\mu_u+o(1),
\end{align} by the implicit function theorem. (This also follows from \cite{neishtadt1987a}.)

Next, in order to determine the leading order of the $o(1)$-term in \eqref{mudelta0}, we introduce $\widetilde \mu$ through the scaling 
\begin{align*}
    \mu = \mu_u + \left(\epsilon \log \sqrt{\epsilon}\right)\widetilde \mu.
\end{align*}
Inserting this into \eqref{Cd1} and using $$\mathcal H(\mu) = \alpha(\mu_u)(\mu-\mu_u)+\mathcal O((\mu-\mu_u)^2),$$ recall \eqref{Hprime}, yields
\begin{align}\eqlab{finaltildemu}
    \widetilde \mu \alpha( \mu_u)+ 1=o(1),
\end{align}
after some simple manipulations. 
Here $o(1)$ denotes a $C^1$-function with respect to $\widetilde \mu$ that is zero for $\epsilon\to 0$. Setting $\epsilon=0$ in \eqref{finaltildemu}, we therefore obtain a simple root given by 
  %
  %
\begin{equation}\nonumber
    \widetilde \mu = -\frac{1}{\alpha(\mu_u)}.
\end{equation}
Then by applying the implicit function theorem and reverting back to $\mu$, we complete the proof of Corollary \ref{Delay}.

\section*{Acknowledgments}
The authors are grateful for fruitful discussions with Martin Wechselberger. Both authors are supported by Danish Research Council (DFF) Grant 4283-00014B. The authors declare that AI has not been used in any form and assume full responsibility for all content.

\newpage
\appendix
\section{A local normal form}\applab{app}
In this section, we show that \eqref{ReS1} is a local normal form for slow passage through a Hopf in $\mathbb R^3$. We therefore consider a general $(1,2)$ slow-fast system of the form
\begin{equation}\eqlab{SlowFastStandard}
    \begin{aligned}
        x' &= f(x,y,\mu,\epsilon),\\
        y' &= g(x,y,\mu,\epsilon),\\
        \mu' & = \epsilon h(x,y,\mu,\epsilon).
    \end{aligned}
\end{equation}
Here $f,g,h$ are assumed to be locally defined real-analytic functions with $x,y,\mu\in \mathbb R$ and $0\le \epsilon\ll 1$. We suppose that the layer problem undergoes a Hopf bifurcation for $\mu=0$. In other words, we suppose that there is a critical manifold $(x,y)=(X_0(\mu),Y_0(\mu))$, $\mu\in I=(\mathbb R,0)$,  with the following eigenvalues $$\alpha(\mu)\pm i\beta(\mu),$$ of the linearization 
\begin{align*}
 \begin{pmatrix}
  f'_x(X_0(\mu),Y_0(\mu),\mu,0) &  f'_y(X_0(\mu),Y_0(\mu),\mu,0)\\
  g'_x(X_0(\mu),Y_0(\mu),\mu,0) &  g'_y(X_0(\mu),Y_0(\mu),\mu,0)
 \end{pmatrix},
\end{align*}
satisfying $$\alpha(0)=0, \quad \alpha'(0)\ne 0, \quad \beta(0)\ne 0.$$ We also assume that $$h(X_0(0),Y_0(0),0,0)\ne 0,$$ so that we have slow passage through a Hopf bifurcation. Then, in a neighborhood of $(x,y,\mu,\epsilon)=(X_0(0),Y_0(0),0,0)$, we can divide the right hand side by $h(x,y,\mu,\epsilon)$ 
\begin{equation}\eqlab{xymu}
\begin{aligned}
 x' &= f(x,y,\mu,\epsilon),\\
 y' &= g(x,y,\mu,\epsilon),\\
 \mu' & = \epsilon,
\end{aligned}
\end{equation}
upon redefining $f$ and $g$. 
\begin{lemma}\label{SlowFastNormal}
    There is an $\epsilon$-dependent locally defined real-analytic change of coordinates $(x,y,\mu)\mapsto (\widetilde x,\widetilde y,\mu)$ ($\mu$-fibered) that brings \eqref{xymu} into the following form 
\begin{equation}\eqlab{normalform1}
\begin{aligned}
 \widetilde x' &= \alpha(\mu)\widetilde x + \beta(\mu)\widetilde y+\epsilon \widetilde R(\widetilde x,\widetilde y,\mu,\epsilon),\\
 \widetilde y' &= -\beta(\mu)\widetilde x + \alpha(\mu)\widetilde y+\epsilon \widetilde S(\widetilde x,\widetilde y,\mu,\epsilon),\\
 \mu' &=\epsilon,
\end{aligned}
\end{equation}
where $\alpha$, $\beta$, $\widetilde R$, and $\widetilde S$ are each locally defined real-analytic functions.
\end{lemma}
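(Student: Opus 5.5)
The approach is to use an $\epsilon$-dependent, $\mu$-fibered affine change of coordinates. It straightens the critical manifold, puts the linearization into real Jordan form, and rescales the fast variables by $\epsilon$, so that all nonlinear terms become $\mathcal O(\epsilon)$. I do not try to remove the nonlinear terms of the layer problem by a near-identity (Poincaré–Dulac) transformation. That would fail at $\mu=0$, because the eigenvalues $\pm\i\beta(0)$ are resonant, so the resonant cubic terms cannot be removed. The statement instead allows the transformation to depend on $\epsilon$, and we exploit this freedom through the rescaling.

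\textbf{Step 1 (linear normalization along the critical manifold).} Since $\beta(0)\ne 0$, the Jacobian
\[
J(\mu):=D_{(x,y)}(f,g)(X_0(\mu),Y_0(\mu),\mu,0)
\]
has eigenvalues $\alpha(\mu)\pm \i\beta(\mu)$ that are simple and non-real for $\mu$ near $0$. Take an eigenvector $v(\mu)$ of $\lambda(\mu)=\alpha(\mu)-\i\beta(\mu)$, normalized analytically; for instance, fix one component by solving a linear system with nonzero determinant. Let $P(\mu)$ be the real-analytic invertible matrix with columns $\real v$ and $\imag v$. Then
\[
P(\mu)^{-1}J(\mu)P(\mu)=\begin{pmatrix}\alpha&\beta\\-\beta&\alpha\end{pmatrix}=:A(\mu),
\]
up to ordering and sign conventions. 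If the sign of $\beta$ comes out wrong, we fix it by replacing $\widetilde y$ with $-\widetilde y$.

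\textbf{Step 2 (the $\epsilon$-rescaled chart).} For $\epsilon>0$, define the $\mu$-fibered, real-analytic change of coordinates
\[
\begin{pmatrix}x\\ y\end{pmatrix}=\begin{pmatrix}X_0(\mu)\\ Y_0(\mu)\end{pmatrix}+\epsilon P(\mu)\begin{pmatrix}\widetilde x\\ \widetilde y\end{pmatrix}.
\]
Writing $u=(\widetilde x,\widetilde y)$, differentiation together with $\mu'=\epsilon$ gives
\[
\epsilon P u' = (f,g)(X_0+\epsilon P u,\mu,\epsilon)-\epsilon (X_0',Y_0')-\epsilon^2 P' u .
\]
Taylor expanding around $(X_0,Y_0,\mu,0)$, and using $(f,g)(X_0,Y_0,\mu,0)=0$ (the critical manifold), we get
\[
(f,g)(X_0+\epsilon Pu,\mu,\epsilon)=\epsilon J P u+\epsilon\,\partial_\epsilon(f,g)(X_0,Y_0,\mu,0)+\epsilon^2\mathcal Q(u,\mu,\epsilon).
\]
Here $\mathcal Q$ is real-analytic; this follows from the integral form of the Taylor remainder. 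Dividing by $\epsilon$ and multiplying by $P^{-1}$ yields
\[
u'=A(\mu)u+P^{-1}\big(\partial_\epsilon(f,g)-(X_0',Y_0')\big)+\epsilon\,\widetilde{\mathcal Q}(u,\mu,\epsilon).
\]

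\textbf{Step 3 (main obstacle: the inhomogeneous $\mathcal O(1)$ term).} The vector $c(\mu):=P^{-1}(\partial_\epsilon(f,g)-(X_0',Y_0'))$ is not $\mathcal O(\epsilon)$. I remove it by a further $\mu$-dependent shift $u=\widetilde u+\eta(\mu)$ with $A(\mu)\eta(\mu)=-c(\mu)$. This is solvable and real-analytic because $\det A=\alpha^2+\beta^2\ne 0$ near $\mu=0$, since $\beta(0)\ne0$. The shift produces only the extra term $-\epsilon\eta'(\mu)$ and terms $\epsilon(\widetilde{\mathcal Q}(\widetilde u+\eta,\mu,\epsilon)-\dots)$, all of which are $\mathcal O(\epsilon)$. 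Collecting them into $\epsilon\widetilde R$ and $\epsilon\widetilde S$ gives \eqref{normalform1}. All maps involved are real-analytic on a neighborhood of $(0,0,0)$ in $(\widetilde x,\widetilde y,\mu)$ for each fixed $0<\epsilon\ll1$, with $\widetilde R,\widetilde S$ analytic in $\epsilon$ as well. This is the sense in which the change of coordinates is ``$\epsilon$-dependent and locally defined''. The point I would check most carefully is that the domains do not degenerate in the $(\widetilde x,\widetilde y)$ variables: the Taylor expansion is valid for $|\epsilon Pu|$ small, which holds for $u$ in any fixed bounded set once $\epsilon$ is small.
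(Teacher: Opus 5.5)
Your proposal is correct and follows essentially the same route as the paper. You straighten the critical manifold, bring the linearization to the real normal form $\begin{pmatrix}\alpha&\beta\\ -\beta&\alpha\end{pmatrix}$ by a $\mu$-dependent linear map, rescale the fast variables by $\epsilon$, and remove the $\mathcal O(1)$ inhomogeneity by the shift $\eta=-A(\mu)^{-1}c(\mu)$, which is well defined since $\alpha^2+\beta^2\neq 0$ near $\mu=0$; this shift is exactly the paper's $(X_1,Y_1)$ from \eqref{COC}. The differences are cosmetic: you rescale before shifting, whereas \eqref{finalcc} does both in one step, and you track the $-\epsilon(X_0',Y_0')$ and $-\epsilon^2P'u$ terms explicitly, which the paper silently absorbs into $\widetilde R_0,\widetilde S_0$ and the quadratic remainders.
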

\begin{proof}
 We follow \cite{neishtadt1987a,neishtadt1988a} and achieve the desired change of coordinates in four steps. First, we define $(\widetilde x,\widetilde y)$ by
 \begin{align*}
  x &= X_0(\mu)+\widetilde x,\\
  y &= Y_0(\mu) + \widetilde y.
 \end{align*}
 This brings the critical manifold to the $\mu$-axis. 
We write
\begin{align*}
 f(x,y,\mu,\epsilon) = f'_x(X_0,Y_0,\mu,0) \widetilde x+ f'_y(X_0,Y_0,\mu,0) \widetilde y+\epsilon f'_\epsilon(X_0,Y_0,\mu,0)+\widetilde R(\widetilde x,\widetilde y,\mu,\epsilon),
\end{align*}
where $\widetilde R(\widetilde x,\widetilde y,\mu,\epsilon)=\mathcal O(\vert (\widetilde x,\widetilde y,\epsilon)\vert^2)$ since $f(X_0(\mu),Y_0(\mu),\mu,0)=0$ for all $\mu\in I$. The function $g$ is expanded in a similar way 
\begin{align*}
 g(x,y,\mu,\epsilon) = g'_x(X_0,Y_0,\mu,0) \widetilde x+ g'_y(X_0,Y_0,\mu,0) \widetilde y+\epsilon g'_\epsilon(X_0,Y_0,\mu,0)+\widetilde S(\widetilde x,\widetilde y,\mu,\epsilon),
\end{align*}
where $\widetilde S(\widetilde x,\widetilde y,\mu,\epsilon)=\mathcal O(\vert (\widetilde x,\widetilde y,\epsilon)\vert^2)$. We now drop the tildes. Due to the Hopf bifurcation, we can then subsequently apply a locally defined linear change of coordinates
to put the linear part
\begin{align*}
\begin{pmatrix}
 f'_x(X_0,Y_0,\mu,0) & f'_y(X_0,Y_0,\mu,0)\\
 g'_x(X_0,Y_0,\mu,0) & g'_y(X_0,Y_0,\mu,0)
 \end{pmatrix},
\end{align*}
into the normal form
\begin{align*}
\begin{pmatrix}
 \alpha & \beta \\
 -\beta & \alpha
 \end{pmatrix}.
\end{align*}
We denote the new coordinates by $(\widetilde x,\widetilde y)$. This gives
\begin{align*}
 \widetilde x' &= \alpha(\mu)\widetilde x + \beta(\mu)\widetilde y+ \epsilon \widetilde R_0(\mu)+\widetilde R(\widetilde x,\widetilde y,\mu,\epsilon),\\
 \widetilde y' &= -\beta(\mu)\widetilde x + \alpha(\mu)\widetilde y+ \epsilon \widetilde S_0(\mu) + \widetilde S(\widetilde x,\widetilde y,\mu,\epsilon),\\
 \mu' &=\epsilon,
\end{align*}
for new real-analytic functions satisfying the same estimates: $\widetilde R(\widetilde x,\widetilde y,\mu,\epsilon)=\mathcal O(\vert (\widetilde x,\widetilde y,\epsilon)\vert^2)$ and
$\widetilde S(\widetilde x,\widetilde y,\mu,\epsilon)=\mathcal O(\vert (\widetilde x,\widetilde y,\epsilon)\vert^2)$. We drop the tildes again and define $( X_1(\mu),Y_1(\mu))$ by
\begin{align} \eqlab{COC}
\begin{pmatrix}
 X_1(\mu)\\
 Y_1(\mu)
\end{pmatrix}
:= -
\begin{pmatrix}
 \alpha(\mu) & \beta(\mu) \\
 -\beta(\mu) & \alpha(\mu)
 \end{pmatrix}^{-1} \begin{pmatrix}
 R_0(\mu)\\
 S_0(\mu) \end{pmatrix},
\end{align}
and introduce the final change of coordinates by
\begin{equation}\eqlab{finalcc}
\begin{aligned}
 x &=\epsilon X_1(\mu) + \epsilon \widetilde x,\\
 y &=\epsilon Y_1(\mu) + \epsilon \widetilde y.
\end{aligned}
\end{equation}
This gives
\begin{align*}
 \widetilde x' &=  \alpha\widetilde x + \beta\widetilde y-\epsilon X_1' + \epsilon^{-1} R(\epsilon (X_1+ \widetilde x),\epsilon (Y_1+\widetilde y),\mu,\epsilon),\\
 \widetilde y' &= -\beta\widetilde x + \alpha\widetilde y-\epsilon Y_1' + \epsilon^{-1} S(\epsilon (X_1+ \widetilde x),\epsilon (Y_1+\widetilde y),\mu,\epsilon),\\
 \mu' &=\epsilon,
\end{align*}
which has the desired form with
\begin{align*}
 \widetilde R(\widetilde x,\widetilde y,\mu,\epsilon) :=  -X_1'+\epsilon^{-2} R(\epsilon (X_1+ \widetilde x),\epsilon (Y_1+\widetilde y),\mu,\epsilon),\\
 \widetilde S(\widetilde x,\widetilde y,\mu,\epsilon) :=-Y_1'+\epsilon^{-2} S(\epsilon (X_1+ \widetilde x),\epsilon (Y_1+\widetilde y),\mu,\epsilon).
\end{align*}

\end{proof}
\begin{remark}\label{RemA1}
   Recall that $\lambda(\mu):=\alpha(\mu)-i\beta(\mu)$ and that by assumption \ref{A1}, we have $\lambda(\mu_*)=0$. This means that the final change of coordinates \eqref{finalcc}, based upon \eqref{COC}, is not well-defined (in general) near $\mu=\mu_*$. An important exception is of course, if $f'_\epsilon(X_0,Y_0,\mu)\equiv 0$ and $g'_\epsilon(X_0,Y_0,\mu)\equiv 0$. Then $\widetilde R_0=\widetilde S_0=0$ and $X_1=Y_1=0$ and then $x=X_0(\mu)$, $y=Y_0(\mu)$, is invariant up to terms of order $\mathcal O(\epsilon^2)$. In this case, we still obtain \eqref{normalform1} regardless of the degeneracy of the matrix in \eqref{COC}. 
   
   In conclusion, our assumptions do not represent the general case of slow passage through a Hopf.    
   Nevertheless, we see our results as a natural stepping stone, extending the work of \cite{hayes2015a} to a certain class of real-analytic systems. We are confident that our approach can be applied to the setting in \cite{neishtadt2026maximaldelaystabilityloss}, but leave this to future work.

\end{remark}

\newpage
\bibliography{refs}
\bibliographystyle{plain}
\end{document}